\newtheorem{theorem}{Theorem}[section]
\newtheorem{lemma}[theorem]{Lemma}
\newtheorem{proposition}[theorem]{Proposition}
\theoremstyle{definition}
\newtheorem{definition}[theorem]{Definition}
\newtheorem{remark}[theorem]{Remark}
\newtheorem{example}[theorem]{Example}
\theoremstyle{remark}
\newtheorem{claim}{Claim}
\newtheorem*{claim*}{Claim}
\newcommand{\invlim}{\underleftarrow{\lim}}
\def\hgen{y}
\def\R{\mathbb{R}}
\def\Z{\mathbb{Z}}
\def\Q{\mathbb{Q}}
\def\F{\mathbb{F}}
\def\CF {\operatorname{CF}}
\def\HF {\operatorname{HF}}
\def\CFp {\operatorname{CF}^+}
\def\CFm {\operatorname{CF}^-}
\def\CFi {\operatorname{CF}^\infty}
\def\HFp {\operatorname{HF}^+}
\def\HFm {\operatorname{HF}^-}
\def\HFi {\operatorname{HF}^\infty}
\def\HFic {\mathbf{HF}^\infty}
\def\CFic {\mathbf{CF}^\infty}
\def\CFK {\operatorname{CFK}}
\newcommand{\Spinc}{\operatorname{Spin^c}}
\def\spincs {\mathfrak{s}}
\def\spinct {\mathfrak{t}}
\def\AA {\mathcal{A}}
\def\CC {\mathcal{C}}
\def\KK {\mathcal{K}}
\def\LL {\mathcal{L}}
\def\MM {\mathcal{M}}
\def\QQ {\mathcal{Q}}
\def\RR {\mathcal{R}}
\def\T{\mathbb{T}}
\def\s {\mathbf{s}}
\def\k {\mathbf{k}}
\def\x {\mathbf{x}}
\def\y {\mathbf{y}}
\def\ul {\underline}
\newcommand{\abs}[1] {\left\lvert #1 \right\rvert}
\newcommand{\floor}[1] {\left\lfloor #1 \right\rfloor}
\newcommand{\gen}[1] {\langle #1 \rangle}
\def\Th{^{\text{th}}}
\def\minus{\smallsetminus}
\def\co{\colon\thinspace}
\def\dtop{d_{\operatorname{top}}}
\def\dbot{d_{\operatorname{bot}}}
\def\dtw{d}
\def\dsh{\tilde d}
\def\dfour{\dsh}
\def\dknot{\dsh}
\def\conn{\mathbin{\#}}
\DeclareMathOperator{\im}{im}  \DeclareMathOperator{\rank}{rank}
  \DeclareMathOperator{\pt}{pt}
 \DeclareMathOperator{\Spin}{Spin}
\DeclareMathOperator{\Span}{Span} \DeclareMathOperator{\nbd}{nbd}
\DeclareMathOperator{\PD}{PD} 
\DeclareMathOperator{\Tors}{Tors} \DeclareMathOperator{\Tor}{Tor}
\DeclareMathOperator{\Hom}{Hom} \DeclareMathOperator{\Ext}{Ext}
\newcommand{\HomRing}[1]{\mathcal{H}_{#1}}
\definecolor{darkblue}{rgb}{0,0,0.5}
\definecolor{darkred}{rgb}{0.5,0,0}
\definecolor{darkgreen}{rgb}{0,0.5,0}
\numberwithin{equation}{section}
\def\ul{\underline}
\def\Hc{H_c}
\def\Hlf{H^{\mathit{lf}}}
\def\Clf{C^{\mathit{lf}}}
\begin{document}

\title{Heegaard Floer invariants in codimension one}
\author[Adam Simon Levine]{Adam Simon Levine}
\address{Department of Mathematics \\ Princeton University \\ Princeton, NJ 08540}
\email{asl2@math.princeton.edu}
\thanks{ASL was partially supported by NSF grant DMS-1405378.}

\author[Daniel Ruberman]{Daniel Ruberman}
\address{Department of Mathematics, MS 050 \\ Brandeis University \\ Waltham, MA 02454}
\email{ruberman@brandeis.edu}
\thanks{DR was partially supported by NSF grant DMS-1506328.}
\subjclass[2010]{57R58 }
\date{October 23, 2016}

\begin{abstract}
Using Heegaard Floer homology, we construct a numerical invariant for any smooth, oriented $4$-manifold $X$ with the homology of $S^1 \times S^3$. Specifically, we show that for any smoothly embedded $3$-manifold $Y$ representing a generator of $H_3(X)$, a suitable version of the Heegaard Floer $d$ invariant of $Y$, defined using twisted coefficients, is a diffeomorphism invariant of $X$. We show how this invariant can be used to obstruct embeddings of certain types of $3$-manifolds, including those obtained as a connected sum of a rational homology $3$-sphere and any number of copies of $S^1 \times S^2$. We also give similar obstructions to embeddings in certain open $4$-manifolds, including exotic $\R^4$s.
\end{abstract}
\maketitle

\section{Introduction} \label{sec: introduction}

A powerful way to study a non-simply connected manifold $X$ is to look at invariants of a codimension $1$ submanifold $Y$ dual to an element of $H^1(X)$. This idea goes back to work of Pontrjagin, Rohlin, and Novikov in the 1950s and 1960s exploring ``codimension $1$ (and higher) signatures'' (see \cite{rohlin:pontrjagin, novikov}).  In dimension $4$, if $X$ has the homology of $S^1 \times S^3$, then the Rohlin invariant of a submanifold $Y$ representing a generator of $H_3(X)$, with spin structure induced from $X$, is a diffeomorphism invariant of $X$~\cite{ruberman:ds}. (We call $Y$ a \emph{cross-section} of $X$.) This invariant has interpretations in terms of Seiberg--Witten theory~\cite{mrowka-ruberman-saveliev:sw-index} and conjecturally in terms of Yang--Mills theory~\cite{ruberman-saveliev:casson, ruberman-saveliev:mappingtori, ruberman-saveliev:4tori, ruberman-saveliev:survey}. More recently, Fr{\o}yshov~\cite{froyshov:qhs-floer} observed that if $X$ has a cross-section $Y$ that is a rational homology $3$-sphere, the invariant $h(Y,\spincs_X) \in \Q$ associated to the unique $\Spinc$ structure $\spincs_X$ on $Y$ induced from $X$, which is defined using monopole Floer homology, is also a smooth invariant of $X$. Fr{\o}yshov's argument uses only the rational homology cobordism invariance property of $h(Y,\spincs_X)$, so it applies verbatim to the version of $h(Y,\spincs_X)$ defined by Kronheimer and Mrowka~\cite[\S39.1]{kronheimer-mrowka:monopole} (presumed, but not known, to be equal to Fr{\o}yshov's) and the similarly defined Heegaard Floer correction term $d(Y,\spincs_X)$~\cite{OSzAbsolute}. In this paper, we extend the range of the Heegaard Floer invariant to an arbitrary smooth $4$-manifold $X$ with the homology of $S^1 \times S^3$, without the requirement that $X$ admit a rational homology sphere cross-section. Note that this is a non-trivial restriction; for instance, the Alexander polynomial obstructs the existence of such cross-sections.

The definition of the correction term $d(Y,\spincs)$ for a rational homology sphere $Y$ relies on the fact that $\HFi(Y,\spincs)$ is isomorphic to $\F[U,U^{-1}]$. (Here $\F$ denotes the field of two elements.) In our earlier work~\cite{LevineRubermanStrleNonorientable, LevineRubermanCorrection}, we showed how to extend the definition of the correction terms for manifolds with $b_1(Y)>0$ for which $\HFi(Y,\spincs)$ is ``standard.'' (Here, $\spincs$ is assumed to be a torsion spin$^c$ structure.) Work of Lidman \cite{LidmanInfinity} shows that this condition holds whenever the triple cup product on $H^1(Y;\Z)$ vanishes identically. However, an arbitrary $4$-manifold $X$ with the homology of $S^1 \times S^3$ need not have any cross-section with standard $\HFi$.

In the present paper, we use a further generalization of the correction terms. For any subspace $A$ of $H^1(Y)$ on which the triple cup product vanishes, we show in Theorem \ref{thm: HFi-standard} that the {\em twisted} Heegaard Floer homology group $\HFi(Y,\spincs; M_A)$ with coefficients in $M_A  = \F[H^1(Y)/A]$ is standard in a suitable sense, allowing us to define a twisted correction term $\dtw(Y,\spincs; M_A)$. (The case where $A=0$ has been studied by Behrens and Golla \cite{BehrensGollaCorrection}.) When $Y$ is a cross-section of $X$, we identify a particular such subspace by studying the cohomology of the infinite cyclic cover $\tilde X$. Our main result, which is stated more precisely below as Theorem \ref{thm: dfour}, is as follows:

\begin{theorem} \label{thm: dfour-intro}
Let $X$ be a homology $S^1 \times S^3$, and let $Y$ be any cross-section of $X$ representing a fixed generator $\hgen$ of $H_3(X)$. Then the correction term of $(Y,\spincs_X)$, suitably normalized, is independent of the choice of $Y$. Thus, we obtain an invariant $\dfour(X,\hgen)$, which depends only on the diffeomorphism type of $X$ and the choice of generator $\hgen \in H_3(X)$.
\end{theorem}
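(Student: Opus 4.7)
The plan is to compare the twisted correction terms of two cross-sections $Y_0, Y_1$ representing $\hgen$ by relating them via a cobordism inside $X$ itself and applying the functoriality of twisted Heegaard Floer homology.

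First, I would arrange $Y_0$ and $Y_1$ to be disjoint. Since both are Poincar\'e dual to the same generator of $H^1(X)$, they can be realized, after a preliminary isotopy, as regular level sets of a single smooth map $f \co X \to S^1$ representing that class; choosing two regular values yields disjoint representatives. Cutting $X$ along $Y_0 \sqcup Y_1$ then exhibits $X = W_0 \cup_{Y_0 \sqcup Y_1} W_1$ where each $W_i$ is a connected cobordism from $Y_0$ to $Y_1$. The ambient $\Spinc$ structure $\spincs_X$ restricts to $\spincs_X$ on each $Y_i$ and to a $\Spinc$ structure on each $W_i$.

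Next, use the cobordism $W_0$ to obtain a comparison map. By Theorem \ref{thm: HFi-standard}, for the subspaces $A_i \subset H^1(Y_i)$ determined by the infinite cyclic cover $\tilde X$, the groups $\HFic(Y_i, \spincs_X; M_{A_i})$ are standard, so the twisted correction terms $\dtw(Y_i, \spincs_X; M_{A_i})$ are defined. With compatible twisted coefficients on $W_0$ derived from $H^1(\tilde X)$, one obtains a cobordism map $F_{W_0}^- \co \HFmc(Y_0, \spincs_X; M_{A_0}) \to \HFmc(Y_1, \spincs_X; M_{A_1})$. The key claim is that this map becomes an isomorphism after inverting $U$; granting this, the standard cobordism shift formula yields
\[
\dtw(Y_0, \spincs_X; M_{A_0}) - \dtw(Y_1, \spincs_X; M_{A_1}) = \frac{c_1(\spincs_X|_{W_0})^2 - 2\chi(W_0) - 3\sigma(W_0)}{4}.
\]
The ``suitable normalization'' in the definition of $\dfour(X,\hgen)$ subtracts an appropriate topological quantity depending on $W_0$, so that the normalized correction terms agree. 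Well-definedness follows because $X = W_0 \cup W_1$ forces any asymmetry between the choices $W_0$ and $W_1$ to be controlled by an intrinsic topological invariant of $(X,\hgen)$ (for instance, $\chi(W_0)+\chi(W_1)=0$ and $\sigma(W_0)+\sigma(W_1)=0$).

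The main obstacle is proving that $F_{W_0}^-$ is an isomorphism on $\HFic$. The cobordism $W_0$ is not a rational homology product, so the naive homology-cobordism argument for $d$-invariance does not apply; however, the coefficient modules $M_{A_i}$ are engineered precisely to record the homology coming from the ambient cover $\tilde X$, effectively absorbing the extra $H_1$ of $W_0$ invisible to a single cross-section. Making this concrete requires identifying the twisted $\HFic$ with a computation involving the ends of $\tilde X$ and checking that $F_{W_0}^-$ intertwines with that identification, likely via a handle decomposition of $W_0$ (rel $\partial$) combined with the twisted surgery exact triangle, reducing the verification to the effect of individual $1$-, $2$-, and $3$-handles.
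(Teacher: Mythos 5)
There is a genuine gap at the heart of the argument, in two places. First, even granting that $F^\infty_{W_0}$ is an isomorphism, the standard commutative-diagram argument relating $\pi \co \HFi \to \HFp$ on the two ends of a cobordism yields only an \emph{inequality} between correction terms (the map $F^+_{W_0}$ carries $\im(\pi_{Y_0})$ \emph{onto} $\im(\pi_{Y_1})$, so the bottom of $\im(\pi_{Y_1})$ is bounded below by the bottom of $\im(\pi_{Y_0})$ plus the shift, but nothing prevents the bottom-most elements of $\im(\pi_{Y_0})$ from dying under $F^+_{W_0}$). This is exactly why the paper's Proposition \ref{prop: d-relation} is stated as $\dsh(Y_1,\spincs;\LL_{Y_1}) \le \dsh(Y_2,\spincs;\LL_{Y_2})$ for $Y_1 \prec Y_2$, not as an equality. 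To close the loop, the paper works in the infinite cyclic cover $\tilde X$ and sandwiches: for any two cross-sections $Y, Y'$ of $\tilde X$ representing the same generator one has $\tau^{-n}(Y) \prec Y' \prec \tau^n(Y)$ for $n$ large, and $\dsh(\tau^n(Y)) = \dsh(Y)$ by deck-transformation invariance of $\spincs_X$, so the two inequalities force equality. Your observation that $\chi(W_0)+\chi(W_1)=0$ points toward an alternative way of closing the loop (apply the inequality to both complementary cobordisms and check the normalizations cancel), but as written you assert equality from a single cobordism, which does not follow.

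Second, the ``key claim'' that $F^\infty_{W_0}$ is an isomorphism is false in general. In the handle-by-handle analysis (Table \ref{table: handles} of the paper), a $1$-handle gives an injection onto one of two summands, and a $2$-handle attached along a non-torsion curve gives a surjection with kernel generated by the $H_1$-action of the attaching circle; only the remaining two cases give isomorphisms. What is true, and what the definition of $\dtw$ is built to exploit, is that $F^\infty_{W_0}$ descends to an isomorphism on the quotients $\QQ\,\HFi$ by the action of $H_1(Y_i)/A_i^\perp$ --- this is why the correction term is defined via $\QQ(\im\pi)$ rather than $\im\pi$ itself. Relatedly, two points you defer need real content: the identification $M_{A_0}(W_0) \cong M_{A_1}$ of induced coefficient modules (Lemma \ref{lemma: compatible}, proved via the compactly supported cohomology of $L_Y$ in $\tilde X$), and the fact that the normalization must be intrinsic to $Y$ (the paper uses $-\rank(A)+b_1(Y)/2$, and verifies $\chi(W) = (b_1(Y_2)-2b_1^c(L_{Y_2}))-(b_1(Y_1)-2b_1^c(L_{Y_1}))$ so that the grading shift is absorbed) rather than depending on a choice of cobordism $W_0$.
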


In principle, the invariant $\dfour(X,\hgen)$ could be used to detect exotic smooth structures on $S^1 \times S^3$, but we do not know of any candidates.

A more tractable application comes from the behavior of $\dfour(X,\hgen)$ under reversing either the orientation of $X$ or the choice of generator of $H_3(X)$. In general, the four numbers $\dfour(\pm X, \pm\hgen)$ are \emph{a priori} unrelated to each other, so they can obstruct the existence of symmetries that reverse the orientations of $X$ or $Y$. Moreover, in Section \ref{subsec: d-reversal} we describe a class of $3$-manifolds which are called \emph{$d$-symmetric}; this includes any manifold of the form $Q \conn n(S^1 \times S^2)$, where $Q$ is a rational homology sphere and $n \ge 0$. The following proposition describes some further symmetries of the invariants $\dfour(\pm X, \pm\hgen)$:

\begin{proposition} \label{prop: dfour-symmetries}
Let $X$ be a homology $S^1 \times S^3$.
\begin{itemize}
\item
If $X$ has a cross-section that is a rational homology sphere, then
\begin{equation} \label{eq: dfour-QHS}
\dfour(X,\hgen) = \dfour(-X,\hgen) = -\dfour(X,-\hgen) = -\dfour(-X,-\hgen).
\end{equation}

\item
If $X$ has a cross-section that is $d$-symmetric, then
\begin{equation} \label{eq: dfour-symmetric}
\dfour(X,\hgen) = -\dfour(-X,-\hgen) \quad \text{and} \quad \dfour(-X,\hgen) = -\dfour(X,-\hgen).
\end{equation}

\item
If $X$ is the mapping torus of a diffeomorphism $\phi \co Y \to Y$, then
\begin{equation} \label{eq: dfour-fibered}
\begin{aligned}
\dfour(X,\hgen) &= \dfour(-X,\hgen) = \ul d(Y,\spincs_X) + \frac{b_1(Y)}{2} \\
\dfour(X,-\hgen) &= \dfour(-X,-\hgen) = \ul d(-Y,\spincs_X) + \frac{b_1(Y)}{2}
\end{aligned}
\end{equation}
where $\ul d$ denotes the twisted correction term defined by Behrens and Golla \cite{BehrensGollaCorrection}.
\end{itemize}
\end{proposition}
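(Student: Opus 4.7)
The plan is to invoke the cross-section independence in Theorem~\ref{thm: dfour-intro} and compute each instance of $\dfour(\pm X, \pm \hgen)$ using the cross-section provided by the hypothesis. Two general observations guide the computation: (i) replacing $\hgen$ by $-\hgen$ forces the cross-section $Y$ to be replaced by $-Y$, while (ii) replacing $X$ by $-X$ leaves the submanifold $Y$ and its induced orientation unchanged but affects the induced $\Spinc$ structure on $Y$ and the subspace $A \subset H^1(Y)$ singled out by the infinite cyclic cover. Each equality in the proposition amounts to the statement that, in the relevant setting, these pieces of data line up to produce a twisted analogue of one of the classical identities $d(-Y,\spincs) = -d(Y,\overline{\spincs})$ and $d(Y,\overline{\spincs}) = d(Y,\spincs)$.

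For the first bullet, take $Y$ to be the rational homology sphere cross-section. Since $b_1(Y)=0$ the only admissible $A$ is zero, $M_A = \F$, the normalization in Theorem~\ref{thm: dfour-intro} contributes nothing, and $\dfour(X,\hgen) = d(Y,\spincs_X)$. Because $X$ is a homology $S^1\times S^3$, the restricted $\Spinc$ structure is self-conjugate, so both classical identities above apply and all four equalities of (\ref{eq: dfour-QHS}) follow at once from (i) and (ii). For the second bullet, take $Y$ to be the $d$-symmetric cross-section; the definition of $d$-symmetry in Section~\ref{subsec: d-reversal} is constructed precisely to be the twisted analogue of $d(-Y,\overline{\spincs}) = -d(Y,\spincs)$ needed to yield (\ref{eq: dfour-symmetric}) upon combining with (i) and (ii). The $d$-symmetry of the family $Q \conn n(S^1 \times S^2)$ is then established there using the twisted connected-sum formula together with an explicit analysis of $S^1 \times S^2$.

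For the third bullet, take $Y$ to be the fiber of the mapping torus $X$. Because $\tilde X = Y \times \R$ is homotopy equivalent to $Y$, the canonical subspace singled out by the cover reduces to the trivial subspace $A=0$ studied by Behrens and Golla, so $\dfour$ specializes to $\ul d(Y,\spincs_X)$ up to the normalization in Theorem~\ref{thm: dfour-intro}, which in this case contributes the additive $\frac{b_1(Y)}{2}$. The equality $\dfour(X,\hgen) = \dfour(-X,\hgen)$ then follows from (ii) together with the self-conjugacy of $\spincs_X|_Y$ and the conjugation-symmetry of $\ul d$, while the $-\hgen$ statement follows from (i). The main technical obstacle lies in the second bullet: one must both formulate the precise twisted symmetry underlying $d$-symmetry and verify it across the entire family $Q \conn n(S^1 \times S^2)$, which requires the twisted connected-sum formula and a direct computation for $S^1 \times S^2$.
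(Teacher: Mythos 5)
Your proposal is correct and follows essentially the same route as the paper: invoke Theorem \ref{thm: dfour} to compute with the given cross-section, track how reversing $\hgen$ (which reverses the orientation of $Y$) and reversing the orientation of $X$ (which swaps the roles of $L_Y$ and $R_Y$ and hence the coefficient module) affect $\dsh$, and then apply $d(-Y)=-d(Y)$, $d$-symmetry, or the identity $\LL_Y=\RR_Y=\HomRing{Y}$ in the three respective cases. The only quibble is that your appeal to self-conjugacy of the restricted $\Spinc$ structure is an unnecessary detour --- in the paper's setup the $\Spinc$ structure on $Y$ is the restriction of a fixed $\spincs$ on $\tilde X$ and never gets conjugated; the only data that changes under reversing the orientation of $X$ is the summand $A\subset H^1(Y)$ (from $H^1_c(L_Y)$ to $H^1_c(R_Y)$), which is exactly the point your observation (ii) already captures.
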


In particular, the failure of \eqref{eq: dfour-QHS} or \eqref{eq: dfour-symmetric} for a given $4$-manifold $X$ enables us to obstruct the existence of particular types of cross-sections in $X$. In Section \ref{sec: 2knots}, we apply this obstruction to the study of $3$-dimensional Seifert surfaces for knotted $2$-spheres in $S^4$.

The proof of Theorem \ref{thm: dfour-intro} relies on examining the lift of a cross-section of $X$ to the infinite cyclic cover $\tilde X$. In fact, our techniques are more general; we consider a $d$ invariant associated to any open $4$-manifold $\tilde X$ satisfying certain homological properties similar to those of an infinite cyclic cover and any embedded $3$-manifold $Y$ representing a generator of $H_3(\tilde X)$, which we also call a cross-section. While this quantity depends on the choice of $Y$ and not just its homology class, we prove an inequality relating the invariants of disjoint cross-sections, which implies Theorem \ref{thm: dfour-intro} in the case where $\tilde X$ is actually the $\Z$ cover of a homology $S^1 \times S^3$. In the general case, the inequality still gives interesting restrictions on the types of cross-sections that can occur. As one application (Example \ref{ex: exoticR4}), we construct an exotic $\R^4$ that has no $d$-symmetric $3$-manifold sufficiently far out in its end.

\subsection*{Acknowledgments}

We are grateful to Marco Golla, Bob Gompf, Matthew Hedden, and Tye Lidman for many enlightening discussions, and to the referee for helpful suggestions.

\section{The surgery formula} \label{sec: surgery-formula}

In this section, we state a twisted version of the mapping cone formula for the Heegaard Floer homology of surgeries on knots. This formula is known to experts but does not appear in the literature; the proof is a straightforward generalization of Ozsv\'ath and Szab\'o's original integer surgery formula \cite{OSzSurgery}. We will use this formula in Section \ref{sec: twisted-d} in order to prove that $\HFi$ with appropriately twisted coefficients has a standard form.

\subsection{Heegaard Floer preliminaries} \label{sec: HF-prelim}

Throughout the paper, all Heegaard Floer homology groups are taken over the ground field $\F = \Z/2\Z$. Singular and simplicial homology and cohomology groups are taken with coefficients in $\Z$ unless otherwise specified.

We first provide a brief overview of Heegaard Floer homology with twisted coefficients. See Ozsv\'ath--Szab\'o \cite{OSzProperties} for the original definition, and Jabuka--Mark \cite{JabukaMarkProduct} for an excellent exposition. Here we emphasize two aspects of the theory that will be needed later: passing from $\HFp$ to $\HFi$ via the $U$--completed version $\HFic$, and the behavior of the coefficient modules under cobordism maps.

Let $Y$ be a closed, connected, oriented $3$--manifold, and let $\spincs$ be a spin$^c$ structure on $Y$. Let $\HomRing{Y} = \F[H^1(Y)]$; this can be identified with the ring of Laurent polynomials in $b_1(Y)$ variables.\footnote{The ring $\HomRing{Y}$ is called $R_Y$ in \cite{JabukaMarkProduct}; we use the notation $\HomRing{Y}$ to avoid confusion with the manifold $R_Y$ in Section \ref{sec: cross-section}.} Associated to $(Y,\spincs)$, there are chain complexes $\CFm(Y,\spincs;\HomRing{Y})$, $\CFi(Y,\spincs;\HomRing{Y})$, and $\CFp(Y,\spincs;\HomRing{Y})$ over $\HomRing{Y}[U]$, well-defined up to chain homotopy equivalence, which fit into a short exact sequence
\begin{equation} \label{eq: CF-exact}
0 \to \CFm(Y,\spincs; \HomRing{Y}) \xrightarrow{\iota} \CFi(Y,\spincs; \HomRing{Y}) \xrightarrow{\pi} \CFp(Y,\spincs; \HomRing{Y}) \to 0.
\end{equation}
We use $\CF^{\circ}(Y,\spincs; \HomRing{Y})$ to refer to any of the three complexes (or, by abuse of notation, the exact sequence relating them). Note that $\CF^{\circ}(Y,\spincs;\HomRing{Y})$ always has a relative $\Z$--grading, which multiplication by $U$ drops by $2$. If $\spincs$ is torsion, multiplication by any element of $\HomRing{Y}$ preserves this grading, and the grading lifts to an absolute $\Q$--grading. (When $\spincs$ is non-torsion, one must put a nontrivial grading on $\HomRing{Y}$ to define the relative $\Z$--grading, but we shall focus on torsion spin$^c$ structures throughout the paper.) If $M$ is any $\HomRing{Y}$--module, then let $\CF^{\circ}(Y,\spincs; M) = \CF^{\circ}(Y,\spincs; \HomRing{Y}) \otimes_{\HomRing{Y}} M$; these groups fit into a short exact sequence just like \eqref{eq: CF-exact}.\footnote{We will not be focusing on non-torsion spin$^c$ structures in this paper, but the fact that $\CF^{\circ}(Y,\spincs;\HomRing{Y})$ is relatively $\Z$--graded (and not just $\Z/2d\Z$--graded for some $d>0$) is one of the advantages of twisted coefficients in other settings. Moreover, depending on the choice of $M$, the $\Z$--grading sometimes descends to $\CF^\circ(Y,\spincs;M)$ even when $\spincs$ is non-torsion. See \cite[Section 3]{JabukaMarkProduct} for a nice discussion.} The homology groups are denoted by $\HF^\circ(Y,\spincs;M)$ and fit into a long exact sequence
\begin{equation} \label{eq: HF-exact}
\dots \to \HFm(Y,\spincs; M) \xrightarrow{\iota_M} \HFi(Y,\spincs; M) \xrightarrow{\pi_M} \HFp(Y,\spincs; M) \to \dots
\end{equation}
(We will frequently omit the subscripts from $\iota_M$ and $\pi_M$ unless they are needed for clarity.)

Any element $\zeta \in H_1(Y)$ induces a degree $-1$, $\HomRing{Y}[U]$--linear chain map
\[
\AA_\zeta \co \CF^{\circ}(Y,\spincs; \HomRing{Y}) \to \CF^{\circ}(Y,\spincs; \HomRing{Y}),
\]
which is well-defined up to chain homotopy. Let $\AA_\zeta^M$ denote the induced map on $\CF^\circ(Y,\spincs;M)$. These induce an action of $\Lambda^*(H_1(Y)/\Tors) \otimes \HomRing{Y}[U]$ on $\HF^\circ(Y,\spincs;M)$. Moreover, following \cite[Remark 5.2]{JabukaMarkProduct}, define
\begin{align}
\label{eq: ZM} Z_M &= \{\alpha \in H^1(Y) \mid \alpha m = m \ \forall m \in M \} \\
\label{eq: ZM-perp} Z_M^\perp &= \{\zeta \in H_1(Y) \mid \gen{\alpha, \zeta} = 0 \ \forall \alpha \in Z_M\}.
\end{align}
For any $\zeta \in Z_M^\perp$, $\AA_\zeta^M$ is chain-homotopic to $0$. Thus, the $H_1$ action descends to an action of $\Lambda^*(H_1(Y)/Z_M^\perp) \otimes \HomRing{Y}[U]$.

If $A$ is a subspace of $H^1(Y)$ such that $H^1(Y)/A$ is torsion-free (i.e., a direct summand of $H^1(Y)$), let $M_A = \F[H^1(Y)/A]$, viewed as an $\HomRing{Y}$--module via the quotient map. Concretely, if $\alpha_1, \dots, \alpha_n$ are a basis for $H^1(Y)$ such that $A = \Span(\alpha_1, \dots, \alpha_k)$, and $t_i \in \HomRing{Y}$ corresponds to $\alpha_i$, then
\[
M_A = \HomRing{Y}/(t_1-1, \dots, t_k-1).
\]
Moreover, $Z_{M_A}=A$ and $Z_M^\perp = A^\perp$, and $H_1(Y)/A^\perp$ is naturally isomorphic to the dual of $A$.

Ignoring the $\HomRing{Y}$--module structure and the $H_1$ action, we note the following basic fact:

\begin{lemma} \label{lemma: HFi-free}
Let $Y$ be a closed, oriented $3$-manifold, $\spincs$ a torsion spin$^c$ structure on $Y$, and $A \subset H^1(Y)$ a direct summand of rank $k$. Then $\HFi(Y,\spincs;M_A)$ is a free, finitely-generated $\F[U,U^{-1}]$--module with rank at most $2^k$.
\end{lemma}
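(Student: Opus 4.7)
The plan is to proceed by induction on $k = \rank A$. For the base case $k=0$ we have $A = 0$ and $M_A = \HomRing{Y}$, so the claim reduces to the Ozsv\'ath--Szab\'o theorem that, for torsion $\spincs$, the fully twisted homology $\HFi(Y,\spincs;\HomRing{Y})$ is isomorphic to $\F[U,U^{-1}]$, which is free of rank $1 = 2^0$.

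For the inductive step, suppose $A \subset H^1(Y)$ is a direct summand of rank $k \geq 1$. I would extend a basis of $A$ to a basis $\alpha_1, \dots, \alpha_n$ of $H^1(Y)$ with $A = \Span(\alpha_1, \dots, \alpha_k)$, and set $A_0 = \Span(\alpha_1, \dots, \alpha_{k-1})$, a direct summand of rank $k-1$. Writing $t \in \HomRing{Y}$ for the Laurent variable corresponding to $\alpha_k$, we have $M_A = M_{A_0}/(t-1)$, and because $M_{A_0}$ is a Laurent polynomial ring, hence an integral domain, the sequence
\[
0 \to M_{A_0} \xrightarrow{t-1} M_{A_0} \to M_A \to 0
\]
is exact. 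Since $\CFi(Y,\spincs;\HomRing{Y})$ is a complex of free $\HomRing{Y}$-modules, tensoring preserves exactness, and the resulting long exact sequence in homology decomposes degree by degree into short exact sequences of the form
\[
0 \to \coker(t-1) \to \HFi(Y,\spincs;M_A) \to \ker(t-1) \to 0,
\]
where kernel and cokernel refer to the degree-zero $R$-linear action of $t-1$ on $\HFi(Y,\spincs;M_{A_0})$, with $R = \F[U,U^{-1}]$. By the inductive hypothesis the latter is free of rank $r \leq 2^{k-1}$ over the PID $R$, so Smith normal form gives $\ker(t-1)$ free of some rank $s \leq r$ and $\coker(t-1)$ of rank $s$ (possibly with torsion). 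Hence $\rank_R \HFi(Y,\spincs;M_A) \leq 2s \leq 2^k$.

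The remaining subtle point, which I expect to be the crux, is upgrading this rank bound to freeness; a priori the cokernel could contribute $R$-torsion. For this I would exploit the absolute $\Q$-grading on $\CFi(Y,\spincs;M_A)$, which exists because $\spincs$ is torsion and because the defining relations $t_i - 1$ of $M_A$ all have degree zero. Then $\HFi(Y,\spincs;M_A)$ is a finitely generated graded $R$-module on which $U$ acts invertibly as a homogeneous element of degree $-2$. Any nonzero homogeneous torsion element $x$ would be annihilated by some nonzero $f \in R$; but the only homogeneous elements of $R$ are monomials $cU^n$ with $c \in \F^\times$, each of which is a unit, forcing $x = 0$. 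Thus $\HFi(Y,\spincs;M_A)$ is torsion-free, and being finitely generated over the PID $R$, it is free of rank at most $2^k$.
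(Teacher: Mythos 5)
Your argument is correct, but it reaches the lemma by a different route than the paper. The paper does not induct on $k$: it runs the universal coefficient spectral sequence for the change of coefficients $\HomRing{Y} \to M_A$ in a single step, computing $E^2_{p,q} = \Tor_p^{\HomRing{Y}}\bigl(\HFi(Y,\spincs;\HomRing{Y}), M_A\bigr) \cong \F^{\binom{k}{p}}$ in even $q$ directly from Ozsv\'ath--Szab\'o's identification of the totally twisted homology with $\F[U,U^{-1}]$ (trivial $H^1$-action); summing $\binom{k}{p}$ over $p$ of fixed parity bounds each graded piece by $2^{k-1}$, and freeness over $\F[U,U^{-1}]$ then follows from the relative $\Z$-grading exactly as in your final paragraph. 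Your induction, peeling off one Koszul relation $t-1$ at a time via the Bockstein-type long exact sequence and Smith normal form over the PID $\F[U,U^{-1}]$, is in effect that spectral sequence unrolled one variable at a time; it is more elementary in that it avoids spectral sequences altogether, while the paper's version records the finer per-degree bound $\dim_\F \HFi_s(Y,\spincs;M_A) \le 2^{k-1}$ and the explicit $E^2$ page, which are reused later to show the analogous spectral sequence collapses in the proof of Proposition \ref{prop: d-standard}. Both arguments ultimately rest on the same two inputs: the $k=0$ theorem and the fact that a finitely generated $\Z$-graded module over $\F[U,U^{-1}]$ on which $U$ acts invertibly is free. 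One small point to tighten in your freeness step: the annihilator $f$ of a homogeneous torsion class need not itself be homogeneous, but its homogeneous components land in distinct degrees and hence each must annihilate the class separately, so your observation that every nonzero homogeneous element of $\F[U,U^{-1}]$ is a unit still finishes the argument.
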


\begin{proof}
Ozsv\'ath and Szab\'o \cite[Theorem 10.12]{OSzProperties} proved the $k=0$ case:
\[
\HFi(Y,\spincs;\HomRing{Y}) \cong \F[U,U^{-1}],
\]
where every element of $H^1(Y)$ acts by the identity. Thus, assume $k>0$. Up to an overall shift, assume that $\HFi(Y,\spincs;\HomRing{Y})$  is supported in even integer gradings. Consider the universal coefficient spectral sequence for changing coefficients from $\HomRing{Y}$ to $M_A$. The $E^2$ page satisfies
\begin{align*}
E^2_{p,q} &= \Tor_p^{\HomRing{Y}}\left( \HFi(Y,\spincs;\HomRing{Y}), M_A \right) \\
&=
\begin{cases}
\F^{\binom{k}{p}} & q \text{ even} \\
0 & q \text{ odd}.
\end{cases}
\end{align*}
For each $s \in \Z$, by summing over all $p,q$ with $p+q=s$, we thus deduce that $\dim_\F \HFi_s(Y,\spincs; M_A) \le 2^{k-1}$. Choose bases (over $\F$) for the summands in grading $0$ and $1$; since $\HFi(Y,\spincs; M_A)$ is relatively $\Z$-graded, these combine to give a basis for $\HFi(Y,\spincs;M_A)$ over $\F[U,U^{-1}]$.
\end{proof}

In the proof of the surgery formula (Theorem \ref{thm: mapping-cone}) below, we will need to pass from a result about $\HFp$ to a result about $\HFi$. This is best done by first considering the $U$-completed version, introduced in \cite{ManolescuOzsvathLink}. Define
\begin{align*}
\CFic(Y,\spincs;M) &= \CFi(Y,\spincs;M) \otimes_{\F[U,U^{-1}]} \F[[U,U^{-1}]
\end{align*}
Denote the homology of this complex by $\HFic(Y,\spincs;M)$. Because $\F[[U,U^{-1}]$ is flat over $\F[U,U^{-1}]$, we have
\begin{equation} \label{eq: HFic}
\HFic(Y,\spincs;M) \cong \HFi(Y,\spincs;M) \otimes_{\F[U,U^{-1}]} \F[[U,U^{-1}]
\end{equation}
Because multiplication by $U$ drops grading by $2$, it can also be understood as a grading-preserving map $\CFp(Y,\spincs;M) \to \CFp(Y,\spincs;M)[2]$.\footnote{We use the following convention: If $C$ is a graded vector space and $n \in \Q$, $C[n]$ denotes the graded vector space with $C[n]_k = C_{k-n}$.} It is easy to see that $\CFic(Y,\spincs;M)$ is isomorphic to the inverse limit of the directed system
\[
\dots \xrightarrow{U} \CFp(Y,\spincs;M)[-2] \xrightarrow{U} \CFp(Y,\spincs;M) \xrightarrow{U} \CFp(Y,\spincs;M)[2] \xrightarrow{U} \dots.
\]
For conciseness, we write
\[
\CFic(Y,\spincs;M) \cong \invlim (\CFp(Y,\spincs; M), U).
\]
There is therefore a short exact sequence
\[
0 \to \invlim^1 (\HFp(Y,\spincs;M), U)_{*-1} \to \HFic_*(Y,\spincs;M) \to \invlim (\HFp(Y,\spincs;M),U)_* \to 0
\]
(where the $*$ denotes the homological grading). The system $(\HFp(Y,\spincs;M),U)$ satisfies the Mittag-Leffler condition, since for all $n$ sufficiently large, the image of $U^n$ is equal to the image of $\pi \co \HFi(Y,\spincs;M) \to \HFp(Y,\spincs;M)$.  (See, e.g., \cite[Proposition 3.5.7]{weibel:intro}.) Thus, the derived functor $\invlim^1 (\HFp(Y,\spincs;M), U)$ vanishes, and we deduce that
\begin{equation} \label{eq: invlim-HFp}
\HFic(Y,\spincs;M) \cong \invlim (\HFp(Y,\spincs; M), U).
\end{equation}

For a non-torsion spin$^c$ structure $\spincs$, $\HFic(Y,\spincs;M)$ does not generally determine  $\HFi(Y,\spincs;M)$; see \cite[Section 2]{ManolescuOzsvathLink}. However, when $\spincs$ is torsion and $M = M_A$ for some summand $A \subset H^1(Y)$, the two theories are essentially interchangeable. Specifically, by Lemma \ref{lemma: HFi-free} and \eqref{eq: HFic}, $\HFic(Y,\spincs;M_A)$ is a finitely generated, free $\F[[U,U^{-1}]$--module whose rank (over $\F[[U,U^{-1}]$) is the same as the rank of $\HFi(Y,\spincs;M_A)$ (over $\F[U,U^{-1}])$. It is thus clear how to recover $\HFi(Y,\spincs;M_A)$ from $\HFic(Y,\spincs;M_A)$. Moreover, the action of $\HomRing{Y}$ is grading preserving, and the action of $H_1(Y)$ drops grading by $1$, so these actions on $\HFi(Y,\spincs;M_A)$ and $\HFic(Y,\spincs;M_A)$ are readily identified.

Next, we discuss the cobordism maps on twisted Heegaard Floer homology. If $W \co Y \to Y'$ is a cobordism between closed, connected, oriented $3$-manifolds, consider the exact sequence
\begin{equation} \label{eq: H(W,dW)}
H^1(\partial W) \xrightarrow{\delta_W} H^2(W, \partial W) \xrightarrow{j_W} H^2(W),
\end{equation}
and let $K(W) = \im(\delta_W) = \ker(j_W)$. The map $\delta_W$ makes $\F[K(W)]$ into an $\HomRing{Y}$--$\HomRing{Y'}$ bimodule. Given an $\HomRing{Y}$--module $M$, let $M(W) = M \otimes_{\HomRing{Y}} \F[K(W)]$. Note that the map $j_W$ is given by the intersection form on $W$; if this form vanishes (meaning there are no classes in $H_2(W)$ of nonzero square), then $K(W) = H^2(W,\partial W)$. According to \cite[Section 2.7]{OSz4Manifold}, for any spin$^c$ structure $\spinct$ on $W$, there is an induced map
\begin{equation} \label{eq: induced-module}
F^\circ_{W,\spinct} \co \HF^\circ(Y, \spinct_{Y}; M) \to \HF^\circ(Y', \spinct|_{Y'}; M(W)),
\end{equation}
which is an invariant of $(W,\spinct)$ up to multiplication by units in $\HomRing{Y}$ and $\HomRing{Y'}$.

For future reference, let us describe $\F[K(W)]$ in the case where $W$ is given by a single handle attachment. To be precise, we assume that
\[
W = Y \times [0,1] \cup \text{$k$-handle},
\]
where the handle is attached along a $(k-1)$--sphere in $Y \times \{1\}$ and $k\in \{1,2,3\}$. (Note that any connected cobordism between connected $3$-manifolds has a handle decomposition with only $1$-, $2$-, and $3$-handles.) In each of these cases, it is easy to describe $K(W)$ as an $\HomRing{Y}$--module. It is easier to work in terms of homology, identifying the sequence \eqref{eq: H(W,dW)} with
\[
H_2(\partial W) \to H_2(W) \to H_2(W,\partial W)
\]
via Poincar\'e duality.

\begin{itemize}
\item
When $k=1$, the inclusion $Y \to W$ induces an isomorphism $H_2(Y) \to H_2(W)$, and hence $K(W) = H^2(W, \partial W) \cong H^1(Y)$. Hence, for any $\HomRing{Y}$--module $M$, we have $M(W) \cong M$. Similarly, when $k=3$, if we let $t \in \HomRing{Y_1}$ denote the Poincar\'e dual of the attaching sphere (which is assumed to be nonseparating and therefore a primitive class), we see that $\F[K(W)] \cong \HomRing{Y_1}/(t-1)$, and therefore $M(W) \cong M /(t-1)M$.

\item
When $k=2$, let $K \subset Y$ denote the attaching circle for the $2$-handle. The exact sequence on homology for the pair $(W,Y)$ gives
\begin{equation} \label{eq: H(W,Y)}
0 \to H_2(Y) \to H_2(W) \to \Z \to H_1(Y),
\end{equation}
where $1 \in \Z$ maps to $[K] \in H_1(Y)$.

If $K$ is rationally null-homologous, let $d>0$ denote its order in $H_1(Y)$. A capped-off rational Seifert surface for $K$ produces a class $[\hat S] \in H_2(W)$ that maps to $d \in \Z$ in \eqref{eq: H(W,Y)}. Therefore, $H_2(W) \cong H_2(Y) \oplus \Z$; the $H_2(Y)$ summand is canonical, while the $\Z$ is generated by $[\hat S]$. If the $2$-handle is attached along a multiple of the rational longitude for $K$ (meaning that the self-intersection of $[\hat S]$ is zero), then the map $H_2(Y') \to H_2(W)$ is surjective, so $K(W) \cong H^2(W, \partial W) \cong H^1(Y) \oplus \Z$.  We thus have $\F[K(W)] \cong \HomRing{Y}[t,t^{-1}]$, and for any $\HomRing{Y}$--module $M$, $M(W) \cong M[t,t^{-1}]$. On the other hand, if the self-intersection of $[\hat S]$ is nonzero, then the image of $H_2(Y')$ in $W$ agrees with the image of $H_2(Y)$. Therefore, $K(W) \cong H^1(Y) \cong H^1(Y')$, and $M(W) \cong M$ for any $\HomRing{Y}$--module $M$.

If $K$ represents a non-torsion element of $H_1(Y)$, then the map $H_2(Y) \to H_2(W)$ is is an isomorphism, so $K(W) \cong H^1(Y)$. In this case, however, $H^1(Y')$ is smaller than $H^1(Y)$; indeed, we may find an identification of $\HomRing{Y_1}$ with $\HomRing{Y_2}[t,t^{-1}]$.
\end{itemize}

\subsection{The exact triangle with twisted coefficients}

Throughout this section, let $Y$ be a closed, oriented $3$-manifold, and let $\spincs$ be a torsion spin$^c$ structure on $Y$. Let $K \subset Y$ be nulhomologous knot, and let $S$ be a Seifert surface for $K$.

For any integer $m$, let $W_m$ be the $m$-framed $2$-handle cobordism from $Y$ to $Y_m = Y_m(K)$. Let $S_m \subset W_m$ denote the capped-off Seifert surface; when $m=0$, we may view this as lying in $Y_0$. For any integer $k$, let $\spinct_{m,k}$ denote the unique spin$^c$ structure on $W_m$ with
\begin{equation} \label{eq: spinc-extend}
\spinct_{m,k}|_Y = \spincs, \quad \gen{c_1(\spinct_{m,k}), [S_m]} + m = 2k,
\end{equation}
and let $\spincs_{m,k} = \spinct_{m,k}|_{Y_m}$. Additionally, let $W'_m$ denote $W_m$ with reversed orientation, viewed as a cobordism from $Y_m$ to $Y$.

As seen in the previous section, when $m \ne 0$, we have a natural identification $\HomRing{Y} \cong \HomRing{Y_m}$, so we may view any $\HomRing{Y}$--module $M$ as an $\HomRing{Y_m}$--module, and vice versa, and $M(W_m) \cong M$. Likewise, if we consider $M$ as an $\HomRing{Y_m}$ module, then the module induced on $Y$ by $W_m'$ is again isomorphic to $M$. It follows that there are maps
\begin{gather*}
F^\circ_{W_m, \spinct_{m,k}} \co \HF^\circ(Y, \spincs; M) \to \HF^\circ(Y_m,\spincs_{m,k}; M) \\
F^\circ_{W'_m, \spinct_{m,k}} \co \HF^\circ(Y_m, \spincs_{m,k}; M) \to \HF^\circ(Y,\spincs; M).
\end{gather*}

On the other hand, when $m=0$, we have $\HomRing{Y_0} \cong \HomRing{Y}[t^{\pm1}]$, and for any $\HomRing{Y}$--module $M$, $M(W_0) \cong M[t^{\pm1}]$. Hence, for each $k$, we have a map
\[
F^\circ_{W_0, \spincs_{0,k}} \co \HF^\circ(Y, \spincs; M) \to \HF^\circ(Y_0, \spinct_{0,k}; M[t^{\pm1}]),
\]
which can then be extended to a map
\[
\ul F^\circ_{W_0, \spincs_{0,k}} \co \HF^\circ(Y, \spincs; M)[t^{\pm1}] \to \HF^\circ(Y_0, \spinct_{0,k}; M[t^{\pm1}])
\]
by the formula
\[
\ul F^\circ_{W_0, \spincs_{0,k}}(x \otimes t^i) = t^i F^\circ_{W_0, \spincs_{0,k}}(x).
\]

When $m \ne 0$, for each $[k] \in \Z/m$, we define
\[
\HF^\circ(Y_0, [\spincs_{m,k}]; M) = \bigoplus_{l \equiv k \pmod m} \HF^\circ(Y_0, \spincs_{0,l}; M).
\]
A key property is that for $m$ sufficiently large and $\abs{k} \le \frac{m}{2}$, the only nonzero summand in this decomposition of $\HFp(Y_0, [\spincs_{m,k}]; M)$ is $\HFp(Y_0, \spincs_{0,k};M)$. The following theorem is a slight generalization of \cite[Theorem 9.1 and Proposition 9.3]{JabukaMarkProduct}:

\begin{theorem} \label{thm: surgery-triangle}
Let $Y$ be a closed, oriented $3$-manifold, and let $\spincs$ be a torsion spin$^c$ structure on $Y$. Let $K \subset Y$ be a null-homologous knot, and let $S$ be a Seifert surface for $K$. Let $M$ be a module for $\HomRing{Y}$. For any integer $m >0$ and any $[k]\in \Z/m$, there is a sequence of $\HomRing{Y}[t^{\pm1}] \otimes \F[U]$--linear maps:
\begin{equation} \label{eq: surgery-triangle}
\xymatrix@C-0.5in{
\HFp(Y_m(K), \spincs_{m,k};M)[t^{\pm1}] \ar[rr]^{\ul{F}} &  &  \HFp(Y, \spincs; M)[t^{\pm1}] \ar[dl]^(0.4){\ul{G}} \\
& \HFp(Y_0(K), [\spincs_{m,k}]; M[t^{\pm1}]) \ar[ul]^(0.6){\ul{H}} &
}
\end{equation}
Moreover, up to an overall power of $t$, the map $\ul F$ appearing in \eqref{eq: surgery-triangle} is given by
\begin{equation} \label{eq: F-spinc}
\ul F = \sum_{l \equiv k \pmod m} F^+_{W_m', \spinct_{m,l}} \otimes t^{\floor{l/m}}.
\end{equation}
\end{theorem}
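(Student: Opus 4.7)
The plan is two-step. First, establish the theorem in the universally twisted case $M = \HomRing{Y}$, following Ozsv\'ath and Szab\'o's original proof of the integer surgery exact triangle \cite{OSzSurgery} adapted to twisted coefficients, as in Jabuka--Mark \cite{JabukaMarkProduct}. Second, deduce the general case by tensoring the chain-level triangle with $M$, using the fact that all chain complexes in sight are free over $\HomRing{Y}$.

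For step one, I would choose a Heegaard triple diagram $(\Sigma, \alpha, \beta, \gamma, z)$ whose $\alpha\beta$-, $\alpha\gamma$-, and $\beta\gamma$-diagrams represent $Y$, $Y_m(K)$, and $Y_0(K)$ respectively, together with enough stabilization that the relevant $\beta\gamma$-cobordism is identified with $W_m'$. Define chain maps $\ul F$, $\ul G$, $\ul H$ by counting holomorphic triangles, decorating each triangle by the element of $\HomRing{Y_0} \cong \HomRing{Y}[t^{\pm 1}]$ determined by its relative spin$^c$ class. The exponent $\floor{l/m}$ appearing in \eqref{eq: F-spinc} is precisely the integer pairing of the relative spin$^c$ class $\spinct_{m,l}$ with the capped Seifert surface $\hat S \subset Y_0$ sitting inside $W_m'$. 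The standard degeneration argument for holomorphic quadrilaterals, carried out in a stabilized quadruple diagram, shows that the pairwise compositions $\ul G \circ \ul F$, $\ul H \circ \ul G$, $\ul F \circ \ul H$ are null-homotopic, and a further quadrilateral count identifies the mapping cone of any one of these maps with the third complex up to chain homotopy. This is essentially the content of Theorem~9.1 and Proposition~9.3 of \cite{JabukaMarkProduct}, adapted to our setup.

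For step two, observe that $\CFp(Y,\spincs;\HomRing{Y})$ is a free $\HomRing{Y}$-module, being generated freely over $\HomRing{Y}[U]$ by the intersection points in $\T_\alpha \cap \T_\beta$ representing $\spincs$, and likewise for the chain complexes at $Y_m$ and $Y_0$. Hence the chain-level exact triangle of step one --- equivalently, the statement that the iterated mapping cone of $\ul F$, $\ul G$, $\ul H$ is chain-contractible --- is preserved by the additive, $\HomRing{Y}$-linear functor $- \otimes_{\HomRing{Y}} M$. Taking homology of the tensored complexes yields the long exact sequence implicit in \eqref{eq: surgery-triangle}, and the formula \eqref{eq: F-spinc} for $\ul F$ is unchanged since the spin$^c$-structure decomposition of the cobordism map $F^+_{W_m'}$ is $\HomRing{Y}$-linear.

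The main obstacle is the spin$^c$-structure bookkeeping in step one: one must verify that the weighting of each triangle by $t^{\floor{l/m}}$ is compatible with the identification $\HomRing{Y_0} \cong \HomRing{Y}[t^{\pm 1}]$ described in Section~\ref{sec: HF-prelim}, and with the $[\spincs_{m,k}]$-grouping on $Y_0$. Once this is in place, the rest of the argument is formal and parallels Ozsv\'ath--Szab\'o's original proof.
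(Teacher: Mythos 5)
Your proposal is correct and follows exactly the route the paper intends: the paper offers no written proof of Theorem~\ref{thm: surgery-triangle}, stating only that it is a slight generalization of \cite[Theorem 9.1 and Proposition 9.3]{JabukaMarkProduct} and of Ozsv\'ath--Szab\'o's original argument, which is precisely the holomorphic-triangle and quadrilateral-degeneration proof you outline, and your second step (tensoring the chain-level triangle-detection data over $\HomRing{Y}$ with $M$) is the right way to obtain the stated generality in the coefficient module. One small imprecision: $\CFp(Y,\spincs;\HomRing{Y})$ is not freely generated over $\HomRing{Y}[U]$ by the intersection points (that description applies to $\CFm$); what you actually need, and what is true, is that it is free as an $\HomRing{Y}$--module, with basis $\{[\x,i] : i \ge 0\}$, which is enough for the additive functor $-\otimes_{\HomRing{Y}} M$ to preserve the maps, null-homotopies, and homotopy equivalences entering the triangle detection lemma.
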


\subsection{The mapping cone}

\def\Acomp{\mathbf{A}}
\def\Bcomp{\mathbf{B}}
\def\Ccomp{\mathbf{C}}
\def\Xcomp{\mathbf{X}}

Let $\CFK^\infty(Y,K;\HomRing{Y})$ denote the totally twisted knot Floer complex of $(Y,K)$ with coefficients, coming from a doubly-pointed Heegaard diagram $(\Sigma, \bm\alpha, \bm\beta, w, z)$. This is generated by tuples $[\x, i, j]$ with $j-i = A(\x)$, where $A(\x)$ is the Alexander grading of $\x$, with differential given by
\[
\partial [\x, i, j] = \sum_{\y \in \T_\alpha \cap \T_\beta} \sum_{\substack{\phi \in \pi_2(\x,\y) \\ \mu(\phi)=1}} \#\widehat{\MM}(\Phi) e^{\AA(\phi)} [\y, i-n_w(\phi), j-n_z(\phi)],
\]
where $\AA$ is the additive assignment used for defining twisted coefficients. Define an action of $\F[U,U^{-1}]$ on $C$ by $U \cdot [\x, i, j] = [\x,i-1,j-1]$. Fix an $\HomRing{Y}$--module $M$, and let $C = \CFK^\infty(Y,K; M) = \CFK^\infty(Y,K; \HomRing{Y}) \otimes_{\HomRing{Y}} M$. Let $\Ccomp = C \otimes_{\F[U,U^{-1}]} \F[[U,U^{-1}]$.

Observe that $C$ can be identified as either $\CF^\infty(\Sigma, \bm\alpha, \bm\beta, w; M)$ or $\CF^\infty(\Sigma, \bm\alpha, \bm\beta, z; M)$, by ignoring either $j$ or $i$ respectively. There is a thus a chain homotopy equivalence $\Phi \co C \to C$ which takes $C\{j < s\}$ into $C \{i < s\}$ for any $s$, and therefore descends to a homotopy equivalence $C\{ j \ge s\} \to C\{i \ge s\}$. (Note that there is no control on how $\Phi$ interacts with the second filtration on each complex.) The map $\Phi$ also extends naturally to $\Ccomp$.

For each $s \in \Z$, let $A_s^+ = C\{\max(i, j-s) \ge 0\}$, and let $B^+ = C\{i \ge 0\} = \CFp(Y, \spincs; M)$. Define maps
\[
v_s^\infty, h_s^\infty \co C \to C
\]
as follows: $v_s^\infty$ is the identity, and $h_s^\infty$ is multiplication by $U^s$ followed by $\Phi$. It is easy to verify that these maps descend to
\[
v_s^+, h_s^+ \co A_s^+ \to B^+,
\]
defined just as in \cite{OSzSurgery}. (That is, $v_s^+$ is the projection onto $C\{i \ge 0\}$, and $h_s^+$ is the projection onto $C\{j \ge s\}$, followed by multiplication by $U^s$ to identify this with $C\{j \ge 0\}$, followed by $\Phi$.)

Let
\[
D_{0,s}^\infty \co C[t^{\pm1}] \to C[t^{\pm1}]
\]
be the map of $\HomRing{Y}[t^{\pm1}] \otimes \F[U,U^{-1}]$--modules given by
\[
D_{0,s}^\infty = v_s^\infty + t \cdot h_s^\infty = 1 + tU^s \Phi
\]
This descends to a map
\[
D_{0,s}^+ \co A_s^+[t^{\pm1}] \to B^+[t^{\pm1}],
\]
given by
\[
D_{0,s}^+ = v_s^+ + t \cdot h_s^+.
\]
Let $\ul X_{0,s}^+$ (resp.~$\ul X_{0,s}^\infty)$ denote the mapping cone of $D_{0,s}^+$ (resp.~$D_{0,s}^\infty$). Let $\ul \Xcomp_{0,s}^\infty$ be the $U$-completion of $\ul X_{0,s}^\infty$, which can be viewed as the mapping cone of the extension of $D_{0,s}^\infty$ to $\Ccomp[t^{\pm1}]$. Clearly, $\invlim (\ul X_{0,s}^+, U) = \ul \Xcomp_{0,s}^\infty$.

The surgery formula then states:

\begin{theorem} \label{thm: mapping-cone}
For any $s \in \Z$, there are isomorphisms of relatively graded $\HomRing{Y} \otimes \F[U]$--modules
\begin{gather}
\label{eq: surgery-plus}
H_*(\ul X_{0,s}^+) \cong \HFp(Y_0, \spincs_{0,s}; M[t^{\pm1}]) \\
\label{eq: surgery-infty-comp}
H_*(\ul \Xcomp_{0,s}^\infty) \cong \HFic(Y_0, \spincs_{0,s}; M[t^{\pm1}])
\end{gather}
If $M = M_A$ for some summand $A \subset H^1(Y)$, and $s=0$, we also have
\begin{equation} \label{eq: surgery-infty}
H_*(\ul X_{0,0}^\infty) \cong \HFi(Y_0, \spincs_{0,0}; M[t^{\pm1}]).
\end{equation}
Moreover, under each isomorphism, the map
\[
\ul F^\circ_{W_0, \spinct_{0,s}} \co \HF^{\circ}(Y, \spincs;M )[t^{\pm1}] \to \HF^{\circ}(Y_0, \spincs_{0,s}; M[t^{\pm1}])
\]
is given (up to a power of $t$) by the inclusion of the subcomplex $B^\circ \subset \ul X^\circ_{0,s}$.
\end{theorem}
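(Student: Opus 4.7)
The proof parallels Ozsv\'ath and Szab\'o's integer surgery formula \cite{OSzSurgery}, with the twisted coefficient module $M$ carried through each step. The main inputs are the twisted surgery exact triangle (Theorem \ref{thm: surgery-triangle}), a twisted version of the large surgery formula, and Lemma \ref{lemma: HFi-free} for passing between $\HFp$, $\HFic$, and $\HFi$.

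First I would establish a twisted large surgery identification: for $m$ sufficiently large and $|s| \le m/2$, there is a chain homotopy equivalence $A_s^+ \simeq \CFp(Y_m, \spincs_{m,s}; M)$, where $M$ is regarded as an $\HomRing{Y_m}$-module via the canonical isomorphism $\HomRing{Y_m} \cong \HomRing{Y}$ (available because $K$ is null-homologous and $m \ne 0$). Under this equivalence, the maps $v_s^+$ and $h_s^+$ correspond, up to a common factor of $t^a$ for some $a$, to the twisted cobordism maps on $W_m'$ induced by the two spin$^c$ structures $\spinct_{m,s}$ and $\spinct_{m,s-m}$ that restrict to $\spincs$ on $Y$. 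This is a direct generalization of \cite[Theorem 4.4]{OSzSurgery}: one uses the same large-surgery Heegaard diagram and holomorphic triangle counts as in \cite{OSzSurgery}, while carefully tracking the additive assignment defining twisted coefficients.

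Combining this identification with formula \eqref{eq: F-spinc}, the map $\ul F$ in the exact triangle becomes, up to an overall power of $t$, the map $D_{0,s}^+ = v_s^+ + t \cdot h_s^+$. For $m$ large with $|s| \le m/2$, only the spin$^c$ structure $\spincs_{0,s}$ in the class $[\spincs_{m,s}]$ contributes to $\HFp(Y_0; M[t^{\pm 1}])$, so the third term of the triangle is $\HFp(Y_0, \spincs_{0,s}; M[t^{\pm 1}])$. Matching the long exact sequence of the triangle with that of the mapping cone of $D_{0,s}^+$ yields \eqref{eq: surgery-plus}, and the connecting map $\ul F^+_{W_0, \spincs_{0,s}}$ from $\HFp(Y)$, which equals $\ul G$ in the triangle, is by construction the inclusion of $B^+[t^{\pm 1}]$ into $\ul X_{0,s}^+$. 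The $U$-completed statement \eqref{eq: surgery-infty-comp} then follows by applying $\otimes_{\F[U,U^{-1}]} \F[[U,U^{-1}]$ to \eqref{eq: surgery-plus}: this functor is exact, commutes with mapping cones, and by \eqref{eq: invlim-HFp} converts $\HFp$ into $\HFic$. Finally, for \eqref{eq: surgery-infty}, specialize to $M = M_A$ and $s = 0$. Then $\spincs_{0,0}$ is torsion by \eqref{eq: spinc-extend}, and $M_A[t^{\pm 1}] = M_{A'}$ for a summand $A' \subset H^1(Y_0)$ corresponding to $A$ under $H^1(Y) \hookrightarrow H^1(Y_0)$; by Lemma \ref{lemma: HFi-free}, $\HFi(Y_0, \spincs_{0,0}; M_A[t^{\pm 1}])$ is a finitely generated free $\F[U,U^{-1}]$-module. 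Combined with the completed isomorphism \eqref{eq: surgery-infty-comp} and Lemma \ref{lemma: HFi-free} applied to $(Y,\spincs;M_A)$ to control the homology of the inputs $A_0^\infty$ and $B^\infty$ of the cone, one recovers the uncompleted isomorphism \eqref{eq: surgery-infty} of relatively graded $\HomRing{Y} \otimes \F[U]$-modules, with the $\HomRing{Y}$- and $H_1$-actions matched by the discussion following \eqref{eq: invlim-HFp}.

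The main technical obstacle is the twisted large surgery identification of the first step: while the strategy of \cite{OSzSurgery} carries over verbatim, verifying the precise statement---including pinning down the common power of $t$ relating $v_s^+$ and $h_s^+$ to the twisted cobordism maps, and thereby to \eqref{eq: F-spinc}---requires careful bookkeeping of the additive assignment through the holomorphic triangle counts. The remaining steps are formal manipulations of cones, long exact sequences, and completions.
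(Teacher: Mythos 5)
Your proposal is correct and follows essentially the same route as the paper: the twisted large surgery formula combined with truncation of the exact triangle of Theorem \ref{thm: surgery-triangle} gives \eqref{eq: surgery-plus}, inverse limits via \eqref{eq: invlim-HFp} give \eqref{eq: surgery-infty-comp}, and finite generation and freeness over $\F[U,U^{-1}]$ from Lemma \ref{lemma: HFi-free} (the argument of Lidman) lets you descend to the uncompleted statement \eqref{eq: surgery-infty}. The only imprecision is describing the passage to \eqref{eq: surgery-infty-comp} as applying $\otimes_{\F[U,U^{-1}]}\F[[U,U^{-1}]$ to \eqref{eq: surgery-plus} --- the objects there are only $\F[U]$--modules, so the correct operation is the inverse limit $\invlim(\,\cdot\,,U)$ with a Mittag--Leffler check --- but you invoke \eqref{eq: invlim-HFp} immediately afterward, which is the right mechanism.
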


\begin{proof}
We begin with \eqref{eq: surgery-plus}. Just as in the untwisted case, the large surgery formula, which states that for $m$ sufficiently large and $\abs{s} \le m/2$, there is an identification of $A_s^+$ with $\CFp(Y_m(K), \spincs_{m,s}; M)$, such that the maps $v^+_s$ and $h^+_s$ compute $F^+_{W'_m, \spinct_{m,s}}$ and $F^+_{W'_m, \spinct_{m,s+m}}$, respectively. (See \cite[Theorem 4.4]{OSzKnot} or \cite[Theorem 2.3]{OSzSurgery}; the proof goes through identically with coefficients in $M$.) Just as in \cite{OSzSurgery}, we use the procedure of ``truncation'' applied to the surgery exact sequence of Theorem \ref{thm: surgery-triangle} to obtain \eqref{eq: surgery-plus}. Taking inverse limits and using \eqref{eq: invlim-HFp} yields \eqref{eq: surgery-infty-comp}.

The proof of \eqref{eq: surgery-infty} follows just like in \cite[Lemma 4.10]{LidmanInfinity}, using Lemma \ref{lemma: HFi-free} to observe that $\HFi$ is finitely generated and free over $\F[U,U^{-1}]$.
\end{proof}

We also describe the $H_1$ action. For any $\zeta \in H_1(Y)$, the induced chain map $\AA_\zeta \co C \to C$ commutes with the differential on $C$ and commutes up to homotopy with $\Phi$: say $\AA_\zeta \Phi + \Phi \AA_\zeta = \partial H_\zeta + H_\zeta \partial$. We may then extend $\AA_\zeta$ to $\ul X^\infty_{0,s}$ by the formula
\[
\tilde \AA_\zeta(a,b) = (\AA_\zeta(a), t U^s H_\zeta(a) + A_\zeta(b)),
\]
which descends to $\ul X^+_{0,s}$. These maps give an action of $\Lambda_*(H_1(Y)/Z_M^\perp)$ on $H_*(\ul X^\circ_{0,s})$. Moreover, there is an easy identification
\[
H_1(Y)/Z_M^\perp \cong H_1(Y_0)/Z_{M[t^{\pm1}]}^\perp.
\]
Following through the proof of Theorem \ref{thm: mapping-cone}, it is not hard to see that these chain maps $\tilde \AA_\zeta$ agree with the $H_1$ action on $\HFp(Y_0, \spincs_{0,s}; M[t^{\pm1}])$. (See \cite[Section 4.2]{HeddenNiUnlink}.) We do not need to worry about defining a chain map associated to the homology class of the meridian of $K$ in $H_1(Y_0)$, since its action on $\HFp(Y_0, \spincs_{0,s}; M[t^{\pm1}])$ is $0$.

For the purposes of this paper, the most important consequence of the preceding discussion is the following:
\begin{proposition} \label{prop: HFi-surgery}
Let $Y$ be a closed, oriented $3$-manifold, let $\spincs$ be a torsion spin$^c$ structure on $Y$, and let $M$ be a finitely generated $\HomRing{Y}$--module. Let $K$ be a nulhomologous knot in $Y$, let $W$ be the $2$-handle cobordism from $Y$ to $Y_0(K)$, let $\spinct_0$ be the torsion extension of $\spincs$ to $W$, and let $\spincs_0 = \spinct_0|_{Y_0(K)}$. Then the map
\[
F^\infty_{W,\spinct_0} \co \HF^\infty(Y, \spincs; M) \to \HF^\infty(Y_0(K), \spincs_0; M[t^{\pm1}])
\]
is an isomorphism.
\end{proposition}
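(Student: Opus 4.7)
The plan is to derive Proposition \ref{prop: HFi-surgery} from the mapping-cone formula of Theorem \ref{thm: mapping-cone} via a direct algebraic analysis of the chain map $D^\infty_{0,0} = 1 + t\Phi$.

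The first step is to invoke the $s = 0$ case of Theorem \ref{thm: mapping-cone}. When $M = M_A$, equation \eqref{eq: surgery-infty} identifies $\HFi(Y_0(K), \spincs_0; M[t^{\pm1}])$ with $H_*(\ul X^\infty_{0,0})$, and the last sentence of the theorem identifies the extended cobordism map $\ul F^\infty_{W,\spinct_0}$ on $\HFi(Y, \spincs; M)[t^{\pm1}]$ with the inclusion of $B^\infty[t^{\pm1}]$ into the cone, up to a unit power of $t$. Restricting to the $t^0$-slice recovers $F^\infty_{W,\spinct_0}$ itself, so it suffices to prove that the chain inclusion $B^\infty = \CFi(Y, \spincs; M) \hookrightarrow \ul X^\infty_{0,0}$ sending $b \mapsto (0, b \otimes 1)$ is a quasi-isomorphism.

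Writing $V := \HFi(Y, \spincs; M)$ and letting $\Phi_*$ denote the automorphism of $V$ induced by the chain homotopy equivalence $\Phi$, the mapping-cone long exact sequence reduces the question to the following algebraic claim: (a) the map $1 + t\Phi_* \co V[t^{\pm1}] \to V[t^{\pm1}]$ is injective, and (b) the retraction $q \co v \otimes t^k \mapsto \Phi_*^{-k}(v)$ descends to an isomorphism $\coker(1 + t\Phi_*) \xrightarrow{\cong} V$. Injectivity follows from the recursion $v_k = \Phi_*(v_{k-1})$ implied by any element in the kernel, combined with the finite-support requirement and the invertibility of $\Phi_*$. For (b), the vanishing of $q$ on the image is a one-line cancellation in characteristic $2$; and given $\sum v_j \otimes t^j$ with $q = 0$, the explicit formula $u_k = \sum_{j \le k} \Phi_*^{k-j}(v_j)$ exhibits it as $(1 + t\Phi_*)(\sum u_k \otimes t^k)$, where the vanishing of $q$ is precisely what forces $u_k = 0$ for $|k|$ large. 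Granting (a) and (b), the long exact sequence collapses to $H_n(\ul X^\infty_{0,0}) \cong V_n$, and under the identification via $q$ the restricted inclusion of $B^\infty \otimes 1 \subset B^\infty[t^{\pm1}]$ is simply the identity on $V$.

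The principal obstacle is extending the argument from $M = M_A$ to an arbitrary finitely generated $\HomRing{Y}$-module, since Lemma \ref{lemma: HFi-free} and \eqref{eq: surgery-infty} only treat the former. For general $M$, I would instead apply the $U$-completed identification \eqref{eq: surgery-infty-comp} and run the same algebraic manipulation on $\HFic$, obtaining an isomorphism after $U$-completion; the descent back to $\HFi$ should then follow from finite generation of the source and target as modules over the Noetherian ring $\F[U,U^{-1}][t^{\pm1}]$, together with the explicit form of $1 + t\Phi_*$ already analyzed above.
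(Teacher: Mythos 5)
Your proposal is correct and follows essentially the same route as the paper: the paper's proof applies Theorem \ref{thm: mapping-cone} to identify $\HFi(Y_0,\spincs_0;M[t^{\pm1}])$ with the homology of the mapping cone of $1+t\Phi$ and $F^\infty_{W,\spinct_0}$ with the inclusion of $C$ into the second factor, and then simply declares the isomorphism ``easy to see''---which is precisely the injectivity/cokernel computation for $1+t\Phi_*$ that you carry out explicitly (including the identification of $H_*$ of the cone with $V$ on which $t$ acts by $\Phi_*^{-1}$). Your worry about general finitely generated $M$ versus $M=M_A$ is a fair observation about the paper itself, which elides the point that \eqref{eq: surgery-infty} is only stated for $M=M_A$ and in any case only ever invokes the proposition with $M=M_{A'}$ for a summand $A'$, so your argument needs no further repair for the paper's purposes.
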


\begin{proof}
We apply Theorem \ref{thm: mapping-cone}. Let $C = \CFK^\infty(Y,K;M)$ denote the doubly-filtered knot Floer complex of $(Y,K)$ with coefficients in $M$, and $\Phi \co C \to C$ the homotopy equivalence discussed above. The surgery formula then says that $\HF^\infty(Y_0, \spincs_0; M[t^{\pm 1}])$ can be computed as the mapping cone of
\[
(1+ t\Phi) \co C[t^{\pm1}] \to C[t^{\pm1}],
\]
and the map $F^\infty_{W,\spinct}$ is given (up to a power of $t$) by the inclusion of $C$ into the second copy of $C[t^{\pm1}]$. From this description, it is easy to see that $\HF^\infty(Y_0, \spincs_0; M[t^{\pm1}]) \cong \HF^\infty(Y, \spincs;M)$, where the action of $t$ is given by $\Phi_*^{-1}$, and that $F^\infty_{W,\spinct}$ is an isomorphism.
\end{proof}

\begin{remark} \label{rmk: rationally-nulhom}
Following \cite{OSzRational}, we may adapt the results of this section (specifically Proposition \ref{prop: HFi-surgery}) to the case where $K$ is merely a rationally nulhomologous knot, representing a class of order $d>1$ in $H_1(Y)$. Assume that $K$ has trivial self-linking, so that it has a well-defined $0$-framing. We briefly sketch the necessary modifications to the surgery formula, leaving details to the reader. The set of relative spin$^c$ structures for $K$, $\Spin^c(Y,K)$, forms an affine set for $H^2(Y,K)$. Spin$^c$ structures on $Y_0(K)$ then correspond to the orbits of the action of $\PD[K_\lambda]$, the Poincar\'e dual of the $0$-framed pushoff of $K$, each of which has $d$ elements. The relative spin$^c$ structures also correspond naturally with spin$^c$ structures on the $2$-handle cobordism $W_0(K)$.

Associated to each $\xi \in \Spin^c(Y,K)$, there is a doubly-filtered complex $C_\xi = \CFK^\infty(Y,K,\xi;M)$, and quotients $A^+_\xi$ and $B^+_\xi$ (see \cite{OSzRational} for all definitions). We also have maps
\[
v^\infty_\xi  \co C_\xi \to C_\xi, \quad h^\infty_\xi \co C_\xi \to C_{\xi + \PD[K_\lambda]},
\]
which induce
\[
v^+_\xi \co A^+_\xi \to B^+_\xi, \quad h^\infty_\xi \co A^+_\xi \to B^+_{\xi + \PD[K_\lambda]},
\]
defined similar to the above. Specifically, $v^\infty_\xi$ is the identity, and $h^\infty_\xi$ is a homotopy equivalence induced by Heegaard moves.

Suppose $\{\xi_1, \dots, \xi_d\}$ is the orbit corresponding to a torsion spin$^c$ structure $\spincs_0$ on $Y_0$,  where $\xi_{i+1} = \xi + \PD[K_\lambda]$ (indices modulo $d$). Let $\spincs_i$ be the (absolute) spin$^c$ structure on $Y$ extending $\xi_i$, and $\spinct_i$ the spin$^c$ structure on $W_0$ corresponding to $\xi_i$. Write $C_i$ for $C_{\xi_i}$ and $\Phi_i$ for $h^\infty_{\xi_i}$. The twisted mapping cone that computes $\HF^\infty(Y_0,\spincs_0; M[t^{\pm1}])$ has the form
\[
\xymatrix{
C_1[t^{\pm1}] \ar[d]^{1} \ar[dr]^{\Phi_1} & C_2[t^{\pm1}] \ar[d]^{1} \ar[dr]^{\Phi_2} & \dots \ar[dr]^{\Phi_{d-1}} & C_d[t^{\pm1}] \ar[d]^{1}
\ar `r[d] `[dd] ^{t \cdot \Phi_d} `l[dlll] [dlll]
\\
C_1[t^{\pm1}] & C_2[t^{\pm1}] & \dots & C_d[t^{\pm1}] & \\
& & & & }
\]
Up to isomorphism, it doesn't matter which of the $\Phi_i$ arrows comes with a power of $t$; the important point is that exactly one of them does. The map
\[
F^\infty_{W_0,\spinct_i} \co  \HFi(Y,\spincs_i;M) \to \HFi(Y_0,\spincs_0; M[t^{\pm1}])
\]
is given (up to a power of $t$) by the inclusion of $C_i$ in the bottom row. Just as in the proof of Proposition \ref{prop: HFi-surgery}, we deduce that this map is an isomorphism. We will make use of this generalization in Section \ref{sec: cross-section}.
\end{remark}

\section{Twisted correction terms} \label{sec: twisted-d}
In this section, we define the twisted correction terms. Throughout, let $Y$ be a closed, oriented $3$-manifold, and let $A \subset H^1(Y)$ be a direct summand on which the triple cup product vanishes. As above, let $\HomRing{Y} = \F[H^1(Y)]$, and let $M_A = \F[H^1(Y)/A]$, viewed as an $\HomRing{Y}$--module.

\subsection{Construction of the invariants}

To begin, we show that $\HFi(Y,\spincs;M_A)$ is standard, in the following sense.

\begin{theorem} \label{thm: HFi-standard}
Let $Y$ be a closed, oriented $3$-manifold, let $\spincs$ be a torsion spin$^c$ structure on $Y$. Let $A \subset H^1(Y)$ be a direct summand on which the triple cup product vanishes, and let $M_A = \F[H^1(Y)/A]$. Then
\[
\HF^\infty(Y, \spincs; M_A) \cong \Lambda^*(A) \otimes \F[U,U^{-1}]
\]
as a $\Lambda^*(H_1(Y)/A^\perp) \otimes \F[U,U^{-1}]$--module (where the action of $\Lambda^*(H_1(Y)/A^\perp)$ on $\Lambda^*(A)$ is induced from the natural action of $\Lambda^*(H_1(Y)/\Tors)$ on $\Lambda^*(H^1(Y))$).
\end{theorem}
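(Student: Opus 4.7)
The plan is to prove the theorem by induction on the rank $k$ of $A$. The base case $k=0$ reduces to $M_A = \HomRing{Y}$, which is Ozsv\'ath--Szab\'o's computation $\HFi(Y,\spincs;\HomRing{Y}) \cong \F[U,U^{-1}]$ with trivial $H^1(Y)$-action (cited in Lemma \ref{lemma: HFi-free}); the $\Lambda^*(H_1(Y)/A^\perp)$-statement is vacuous since $H_1(Y)/A^\perp = 0$ when $A=0$.

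For the inductive step, I would pick $\alpha \in A$ primitive in $H^1(Y)$ together with a complementary rank-$(k{-}1)$ summand $A' \subset A$. Since $A'$ inherits the vanishing triple cup product hypothesis, induction gives $\HFi(Y,\spincs;M_{A'}) \cong \Lambda^*(A') \otimes \F[U,U^{-1}]$. Because $t_\alpha - 1$ is a non-zero-divisor on $M_{A'}$, the short exact sequence
\[
0 \to M_{A'} \xrightarrow{t_\alpha - 1} M_{A'} \to M_A \to 0
\]
of $\HomRing{Y}$-modules induces a long exact sequence in $\HFi(Y,\spincs;-)$. If the induced action of $t_\alpha - 1$ on $\HFi(Y,\spincs;M_{A'})$ can be shown to vanish, the long exact sequence breaks into split short exact sequences, and summing the pieces (with the grading shift on the second summand reflecting $\deg \alpha = 1$) yields
\[
\HFi(Y,\spincs;M_A) \cong \Lambda^*(A') \otimes \F[U,U^{-1}] \oplus \Lambda^*(A') \otimes \F[U,U^{-1}] \cong \Lambda^*(A) \otimes \F[U,U^{-1}].
\]
The $\Lambda^*(H_1(Y)/A^\perp)$-module structure is then transferred through the natural $H_1(Y)$-action by naturality of the LES, matching the contraction action of $A^* \cong H_1(Y)/A^\perp$ on $\Lambda^*(A)$.

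The crucial and hardest step is to verify that $t_\alpha - 1$ annihilates $\HFi(Y,\spincs;M_{A'})$. Since $t_\alpha$ acts as the identity on $\HFi(Y,\spincs;\HomRing{Y})$ by Ozsv\'ath--Szab\'o, the $\HomRing{Y}$-equivariant map $\HFi(Y,\spincs;\HomRing{Y}) \to \HFi(Y,\spincs;M_{A'})$ carries this triviality to its image, which is the $\Lambda^0(A') \otimes \F[U,U^{-1}]$ summand. On the higher summands $\Lambda^j(A') \otimes \F[U,U^{-1}]$ for $j \geq 1$, constructed inductively by iterating the same LES construction through a basis of $A'$, $t_\alpha$ acts by Massey-type secondary operations assembled from the chain-level homotopies witnessing the triviality of the various $t_\beta - 1$ on $\CFi(Y,\spincs;\HomRing{Y})$. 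Standard homological arguments identify these secondary operations with triple cup products; specifically, the component of $t_\alpha - 1$ acting on a generator $\alpha'_{i_1} \wedge \cdots \wedge \alpha'_{i_j}$ is controlled by the triple products $\alpha \cup \alpha'_{i_r} \cup \alpha'_{i_s}$, all of which vanish by hypothesis. This forces $t_\alpha - 1 = 0$ on the whole of $\HFi(Y,\spincs;M_{A'})$ and completes the induction. The argument extends Lidman's treatment \cite{LidmanInfinity} of the case $A = H^1(Y)$, $M_A = \F$ to the twisted-coefficient setting, and the surgery machinery of Section \ref{sec: surgery-formula}, in particular Theorem \ref{thm: mapping-cone} and Proposition \ref{prop: HFi-surgery}, is available for making the chain-level analysis precise (or, alternatively, for an inductive surgery-based reduction in place of the algebraic LES step).
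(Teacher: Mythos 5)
Your overall architecture inverts the paper's: you induct on $\rank(A)$ with base case $A=0$ (Ozsv\'ath--Szab\'o's totally twisted computation) and peel off one coefficient variable at a time via the long exact sequence attached to $0 \to M_{A'} \xrightarrow{t_\alpha - 1} M_{A'} \to M_A \to 0$, whereas the paper inducts on the corank $\rank(H^1(Y)/A)$ with base case $A = H^1(Y)$ (Lidman's theorem) and peels off corank by realizing $Y$ as $0$-surgery on a knot in a manifold $Z$ of smaller $b_1$, using the twisted surgery formula (Proposition \ref{prop: HFi-surgery}) to see that the $2$-handle cobordism map is an isomorphism. Your reduction of the inductive step to the single claim that $t_\alpha - 1$ annihilates $\HFi(Y,\spincs;M_{A'})$ is correct and correctly identified as the crux: given the inductive hypothesis and Lemma \ref{lemma: HFi-free}, the rank count in the long exact sequence shows this vanishing is \emph{equivalent} to the conclusion for $A$.

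The gap is that you do not prove this vanishing. For $\alpha \in A \setminus A'$, the element $t_\alpha$ does not act as the identity on the coefficient module $M_{A'}$, so there is no formal reason for it to act as the identity on $\HFi(Y,\spincs;M_{A'})$; indeed Remark \ref{rmk: module-structure} cautions precisely that results of this type do not come for free, and a universal-coefficients comparison with $\HFi(Y,\spincs;\HomRing{Y})$ only shows that $t_\alpha$ acts \emph{unipotently}, which is not enough (a nonzero nilpotent $t_\alpha - 1$ would make $\HFi(Y,\spincs;M_A)$ have rank strictly less than $2^k$). Your justification --- that the obstruction is a ``Massey-type secondary operation'' which ``standard homological arguments identify with triple cup products'' --- is not a standard homological argument: identifying such secondary operations on $\HFi$ with the triple cup product form is essentially the entire content of Lidman's theorem \cite{LidmanInfinity} (via the surgery formula and Mark's cup homology), and you would need a new, partially twisted version of it, with control over possible contributions from classes outside $A$ and from higher-order corrections. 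Since this unproven step is logically equivalent to the theorem itself within your setup, the argument as written is circular at its core. Your closing parenthetical --- replacing the algebraic step by a surgery-based reduction --- is in fact the route the paper takes, and is the piece you would need to develop to make the proof complete.
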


\begin{proof}
We induct on the rank of $H^1(Y)/A$, starting with the extremal case when $A = H^1(Y)$ and the triple cup product on $H^1(Y)$ vanishes identically. The statement in this case follows from \cite{LidmanInfinity}, as explained in \cite[Theorem 3.2]{LevineRubermanCorrection}.

For the induction, assume that $H^1(Y)/A \ne 0$. Let $J \subset Y$ be a knot representing a primitive homology class in $A^\perp \subset H_1(Y)$ such that $\gen{\beta, [J]} = 1$ for some $\beta \in H^1(Y) \minus A$. Let $Z$ be obtained by surgery on $J$ with some arbitrary framing, and let $K \subset Z$ denote the core of the surgery solid torus, so that $Y = Z_0(K)$. Let $W$ be the $2$-handle cobordism from $Z$ to $Y$, and $\iota_Y \co Y \to W$ and $\iota_Z \co Z \to W$ the inclusions.

The map $\iota_Z^* \co H^1(W) \to H^1(Z)$ is an isomorphism, and $\iota_Y^* \circ (\iota_Z^*)^{-1}$ restricts to an injection on $A$. Let $A' \subset H^1(Z)$ be the image of this restriction, and let $M_{A'} = \F[H^1(Z)/A']$. Then $M_A \cong M_{A'}(W) \cong M_{A'}[t^{\pm1}]$. Also, let $\spincs'$ be the restriction to $Z$ of the unique spin$^c$ structure on $W$ that extends $\spincs$.

By the induction hypothesis,
\[
\HF^\infty(Z, \spincs'; M_{A'}) \cong \Lambda^*(A') \otimes \F[U,U^{-1}]
\]
as a $\Lambda^*(H_1(Z)/A^{\prime \perp}) \otimes \F[U,U^{-1}]$--module. The result then follows from Proposition \ref{prop: HFi-surgery}.
\end{proof}

\begin{remark} \label{rmk: totally-twisted}
As noted in the proof of Lemma \ref{lemma: HFi-free} above, the other extremal case of Theorem \ref{thm: HFi-standard} was proven by Ozsv\'ath and Szab\'o \cite[Theorem 10.12] {OSzProperties}: when $A=0$, the totally twisted homology satisfies
\[
\HF^\infty(Y, \spincs; \HomRing{Y}) \cong \F[U,U^{-1}].
\]
\end{remark}

\begin{remark} \label{rmk: module-structure}
Note that Theorem \ref{thm: HFi-standard} does \emph{not} describe the structure of $\HFi(Y,\spincs;M_A)$ as an $\HomRing{Y}$--module. The action of any element of $H^1(Y)$ is a grading-preserving automorphism of $\HFi(Y, \spincs;M_A)$ that commutes with the action of $\Lambda^*(H_1(Z)/A^\perp) \otimes \F[U,U^{-1}]$, but in principle this map need not be the identity.
\end{remark}

We may now define the $d$ invariant that we use below, which is analogous to the $\dtop$ invariant defined in \cite[Definition 3.3]{LevineRubermanStrleNonorientable}. We make use of notation from \cite{LevineRubermanCorrection}. First, given any finitely generated, free abelian group $V$ and any $\Lambda^*(V)$--module $N$, define $\QQ^V (N) = N / (V \cdot N)$ and $\KK^V (N) = \{n \in N \mid v \cdot n = 0 \ \forall v \in V\}$ (i.e., the quotient and kernel of the action of $V$, respectively). We sometimes omit the superscripts if they are understood from context.

\begin{definition} \label{def: d}
Let $Y$ be a closed, oriented $3$-manifold, $\spincs$ a torsion spin$^c$ structure, and $A \subset H^1(Y)$ a subspace on which the triple cup product vanishes. Let
\[
\pi\co \HFi(Y,\spincs;M_A) \to \HFp(Y,\spincs;M_A)
\]
denote the canonical map. Then there are isomorphisms
\begin{align}
\label{eq: QHFi} \QQ^{H_1(Y)/A^\perp} (\HFi(Y,\spincs;M_A)) &\cong \F[U,U^{-1}] \\
\label{eq: Qim(pi)} \QQ^{H_1(Y)/A^\perp} (\im(\pi))  &\cong \F[U,U^{-1}] / U \F[U],
\end{align}
such that the induced map
\[
\bar\pi \co \QQ^{H_1(Y)/A^\perp} (\HFi(Y,\spincs;M_A))  \to \QQ^{H_1(Y)/A^\perp} (\im(\pi) )
\]
is the natural projection. The \emph{correction term} $\dtw(Y,\spincs; M_A) \in \Q$ is defined as minimal grading in which $\bar \pi$ is nontrivial, or equivalently as the grading of $1 \in \F[U,U^{-1}] / U \F[U]$ under the identification \eqref{eq: Qim(pi)}. The \emph{shifted correction term} is defined as
\begin{equation} \label{eq: d-shifted}
\dsh(Y,\spincs;M_A) = \dtw(Y, \spincs; M_A) - \rank(A) + \frac{b_1(Y)}{2}.
\end{equation}
If $H^2(Y)$ is torsion-free, so that $Y$ has a unique torsion spin$^c$ structure, we sometimes omit $\spincs$ from the notation.
\end{definition}

When $A = H^1(Y)$ (so that $M_A = \F$) and the triple cup product vanishes identically, $\dtw(Y, \spincs; \F) = \dtop(Y, \spincs)$. When $A=0$ (so that $M_A = \HomRing{Y}$), $\dtw(Y, \spincs;\HomRing{Y})$ is precisely the invariant $\ul d$ defined by Behrens and Golla \cite{BehrensGollaCorrection}.

\begin{example} \label{ex: S1xS2}
When $Y = S^1 \times S^2$, it is easy to compute directly from a Heegaard diagram that
\begin{align*}
\dtw(Y; \F) &= \frac12 & \dsh(Y; \F) &= 0\\
\dtw(Y; \HomRing{Y}) &= -\frac12 & \dsh(Y; \HomRing{Y}) &= 0.
\end{align*}
\end{example}

\begin{example} \label{ex: trefoil}
If $Y$ is obtained by $0$-surgery on a knot $K \subset S^3$, then $\dsh(Y;\F)$ and $\dsh(Y; \HomRing{Y})$ are both determined by the knot Floer complex of $K$. Specifically, combining \cite[Example 3.9]{BehrensGollaCorrection}, \cite[Proposition 4.12]{OSzAbsolute}, and \cite[Proposition 1.6]{NiWuCosmetic}, we have:
\begin{gather}
\label{eq: 0surgery-untwisted}
\dsh(Y; \F) = \dtw(Y;\F) - \frac12 = \dtop(Y) -\frac12 = d(S^3_1(K)) = -2 V_0(K) \\
\label{eq: 0surgery-twisted}
\dsh(Y; \HomRing{Y}) = \dtw(Y; \HomRing{Y}) + \frac12  = \dbot(Y)+\frac12 = d(S^3_{-1}(K)) = 2 V_0(\bar K),
\end{gather}
where $V_0$ is a nonnegative integer invariant defined by Ni and Wu \cite[Section 2.2]{NiWuCosmetic}, and $\bar K$ denotes the mirror of $K$. (The second inequality in \eqref{eq: 0surgery-twisted}, proven by Behrens and Golla, is special to the case of $0$-surgery on knots in $S^3$.)

In particular, if $K$ is either the right-handed trefoil $T_{2,3}$ or its positive, untwisted Whitehead double $D(T_{2,3})$, then $V_0(K)=1$ and $V_0(\bar K) = 0$. The statement for $T_{2,3}$ is a straightforward computation; the statement for $D(T_{2,3})$ follows from the fact that $\CFK^\infty(D(T_{2,3}))$ is isomorphic to $\CFK^\infty(T_{2,3})$ plus an acyclic summand that does not affect $V_0$ \cite[Proposition 6.1]{HeddenKimLivingston}. Hence, if $Y$ is obtained by $0$-surgery on either of these knots, we have:
\begin{align*}
\dtw(Y; \F) &=  -\frac32 & \dsh(Y; \F) &= -2\\
\dtw(Y; \HomRing{Y}) &=  -\frac12 & \dsh(Y; \HomRing{Y}) &= 0 \\
\dtw(-Y; \F) &= \frac12 & \dsh(-Y; \F) &= 0 \\
\dtw(-Y; \HomRing{-Y}) &= \frac32 & \dsh(-Y; \HomRing{-Y}) &= 2.
\end{align*}
(The results for $0$-surgery on the trefoil were also proven earlier by Ozsv\'ath and Szab\'o \cite{OSzAbsolute}.)
\end{example}

\begin{example} \label{ex: T3}
Let $T^3$ denote the $3$-torus. Because the triple cup product on $H^1(T^3)$ is nonvanishing, the invariant $\dtw(T^3; M_A)$ is only defined when $\rank A = 0$, $1$, or $2$. When $A=0$, \cite[Proposition 8.5]{OSzAbsolute} shows that
\begin{align*}
\dtw(T^3; \HomRing{T^3}) &=  \frac12 & \dsh(T^3; \HomRing{T^3}) &= 2.
\end{align*}
On the other hand, we will see below in Example \ref{ex: T3-partial} that when $\rank A = 1$ or $2$, $\dsh(T^3; M_A)=0$. Since any automorphism of $H^1(T^3)$ can be realized by a self-diffeomorphism of $T^3$, it suffices to compute these invariants for a single subspace $A$ of either rank. Note also that $T^3$ admits orientation-reversing diffeomorphisms, so the the same statements hold with either orientation on $T^3$.
\end{example}

\subsection{Relation with untwisted invariants}

We now describe the relationship between Definition \ref{def: d} and the invariants defined in \cite{LevineRubermanCorrection}. Suppose the triple cup product on $H^1(Y)$ vanishes identically, so that the untwisted homology group $\HFi(Y,\spincs;\F)$ is standard:
\[
\HFi(Y,\spincs; \F) \cong \Lambda^* H^1(Y) \otimes \F[U,U^{-1}]
\]
as a $\Lambda^*(H_1(Y)/\operatorname{Tors}) \otimes \F[U,U^{-1}]$--module. In \cite{LevineRubermanCorrection}, we defined an ``intermediate correction term'' $d(Y, \spincs, V)$ associated to each subspace $V \subset H_1(Y)$. In particular, $\dtop(Y,\spincs) = d(Y,\spincs,\{0\})$ and $\dbot(Y,\spincs) = d(Y,\spincs,H_1(Y))$. The two constructions are related as follows:

\begin{proposition} \label{prop: d-standard}
If the triple cup product on $H^1(Y)$ vanishes identically, then for each summand $A \subset H^1(Y)$, we have
\begin{equation} \label{eq: d-standard}
\dtw(Y, \spincs; M_A) \le d(Y, \spincs, A^{\perp}), 
\end{equation}
and therefore
\begin{align}
\label{eq: d-standard-dbot}
\dtw(Y,\spincs; M_A) &\le \dbot(Y,\spincs) + \rank(A)  \\
\label{eq: dsh-standard}
\dsh(Y,\spincs; M_A) &\le \dbot(Y,\spincs) + \frac{b_1(Y)}{2}.
\end{align}
\end{proposition}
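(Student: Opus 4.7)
The plan is to use the augmentation map $\phi\co M_A \to \F$ to compare the twisted and untwisted theories. Since $\phi$ is $\HomRing{Y}$-linear (because the augmentation $\HomRing{Y}\to\F$ factors as $\HomRing{Y}\to M_A\xrightarrow{\phi}\F$), it induces a chain map on Heegaard Floer complexes fitting into a commutative square
\[
\xymatrix{
\HFi(Y,\spincs;M_A) \ar[r]^{\pi_{M_A}} \ar[d]_{\phi_*^\infty} & \HFp(Y,\spincs;M_A) \ar[d]^{\phi_*^+} \\
\HFi(Y,\spincs;\F) \ar[r]^{\pi_\F} & \HFp(Y,\spincs;\F).
}
\]
Under the hypothesis that the triple cup product on $H^1(Y)$ vanishes identically, both $\HFi$'s have standard forms: the twisted side by Theorem~\ref{thm: HFi-standard}, and the untwisted side by taking $A=H^1(Y)$ in that theorem (this is the base case, from \cite{LidmanInfinity}).

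First I would verify that, under these standard-form identifications, $\phi_*^\infty$ is the natural inclusion
\[
\Lambda^*(A)\otimes\F[U,U^{-1}] \hookrightarrow \Lambda^*(H^1(Y))\otimes\F[U,U^{-1}]
\]
induced by $A\hookrightarrow H^1(Y)$; in particular it is injective, grading-preserving, and equivariant for the residual $H_1(Y)$-actions (with $A^\perp$ acting trivially on the image, consistent with \eqref{eq: ZM-perp}). This follows from naturality of the inductive construction proving Theorem~\ref{thm: HFi-standard}: at each inductive step, the isomorphism supplied by Proposition~\ref{prop: HFi-surgery} intertwines the augmentation of coefficient modules on both sides, since the change-of-coefficients map commutes with surgery cobordism maps.

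Granting this, consider the top wedge $\omega_A\in\Lambda^{\rank A}(A)$ and the classes $\omega_A\otimes U^{-k}$ for $k\ge0$, which sit at a common grading on both sides of $\phi_*^\infty$. The invariant $d(Y,\spincs,A^\perp)$ from \cite{LevineRubermanCorrection} is the minimal grading among those $\gr(\omega_A\otimes U^{-k})$ for which $\pi_\F(\omega_A\otimes U^{-k})$ is nontrivial in the appropriate top quotient of $\HFp(Y,\spincs;\F)$, while $\dtw(Y,\spincs;M_A)$ is the analogous minimum computed via $\pi_{M_A}$. Commutativity of the square gives
\[
\phi_*^+\bigl(\pi_{M_A}(\omega_A\otimes U^{-k})\bigr) = \pi_\F(\omega_A\otimes U^{-k}),
\]
so whenever the right side is nonzero, so is $\pi_{M_A}(\omega_A\otimes U^{-k})$. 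Because $\omega_A$ already lives in the top wedge direction, its nonvanishing image under $\pi_{M_A}$ automatically descends to a nonzero class in $\QQ^{H_1(Y)/A^\perp}(\im\pi_{M_A})\cong\F[U,U^{-1}]/U\F[U]$. Thus the set of admissible $k$ in the twisted problem contains the set in the untwisted problem, so the twisted minimum is no larger, giving \eqref{eq: d-standard}.

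For \eqref{eq: d-standard-dbot}, I would invoke monotonicity of the intermediate correction terms from \cite{LevineRubermanCorrection}: for $A^\perp\subseteq H_1(Y)$ with complementary rank $\rank A$, passing from the $A^\perp$-top tower to the $H_1(Y)$-top tower shifts the top wedge up by $\rank A$, giving $d(Y,\spincs,A^\perp)\le \dbot(Y,\spincs)+\rank A$. The shifted form \eqref{eq: dsh-standard} then follows by substituting into \eqref{eq: d-shifted}. The main obstacle is the identification of $\phi_*^\infty$ with the natural inclusion in Step~1, which requires careful tracking of the naturality of the surgery formula (Theorem~\ref{thm: mapping-cone}, Proposition~\ref{prop: HFi-surgery}) through the induction on $b_1(Y)-\rank A$.
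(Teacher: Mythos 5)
Your overall strategy coincides with the paper's: both compare the twisted and untwisted theories via the change-of-coefficients map induced by the augmentation $M_A \to \F$, identify the image of $\HFi(Y,\spincs;M_A)$ inside $\HFi(Y,\spincs;\F)$ with the subspace killed by $A^\perp$ (your $\Lambda^*(A)\otimes\F[U,U^{-1}]$ is exactly $\KK^{A^\perp}\HFi(Y,\spincs;\F)$), and then compare the two definitions of the correction terms. The problem is that the entire technical content of the proposition lives in your ``Step 1,'' which you assert but do not prove, and the route you sketch for it does not go through as described. Naturality of the surgery induction is delicate here because the coefficient modules transform differently under the $2$-handle cobordism $W\co Z\to Y=Z_0(K)$: on the twisted side $M_{A'}(W)\cong M_{A'}[t^{\pm1}]\cong M_A$, while on the untwisted side $\F(W)\cong\F[t^{\pm1}]$, so the naturality square lands in $\HFi(Y,\spincs;\F[t^{\pm1}])$ and a further (non-injective on homology, in general) specialization $t\mapsto 1$ is needed; moreover the untwisted cobordism map $F^\infty_{W}$ is not an isomorphism (the rank doubles), so one cannot simply ``intertwine'' the two inductions. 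Carrying this out would amount to re-proving the structural statement by hand. The paper avoids this entirely: it establishes the two key facts --- that $M_A$ acts trivially on $\HFi(Y,\spincs;M_A)$, and that the edge map $g^\infty$ is injective with image $\KK^{A^\perp}\HFi(Y,\spincs;\F)$ --- by running the universal coefficients spectral sequence for the change $M_A\rightsquigarrow\F$ and observing, from the rank computation of Lemma~\ref{lemma: HFi-free} together with $\dim_\F\HFi_q(Y,\spincs;\F)=2^{n-1}$, that the total rank of the $E^1$ page already equals that of the abutment, forcing collapse at $E^1$. Some such dimension count (or an equivalent argument) is indispensable and is missing from your proposal.

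A secondary, fixable imprecision: near the end you claim that nonvanishing of $\pi_{M_A}(\omega_A\otimes U^{-k})$ in $\HFp(Y,\spincs;M_A)$ ``automatically descends'' to a nonzero class in $\QQ^{H_1(Y)/A^\perp}(\im\pi_{M_A})$. That implication is false in general; an element can be nonzero yet lie in $(H_1(Y)/A^\perp)\cdot\im(\pi_{M_A})$. The correct deduction runs in the other direction through the quotients: because $\phi_*^+$ is $H_1$-equivariant and carries $\im(\pi_{M_A})$ onto $J^+(Y,\spincs,A^\perp)$, the square descends to a square of $\QQ$'s, and nonvanishing of the image in $\QQ J^+(Y,\spincs,A^\perp)$ (which is what the definition of $d(Y,\spincs,A^\perp)$ supplies) forces nonvanishing upstairs in $\QQ(\im\pi_{M_A})$. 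This is how the paper concludes, and your argument should be rephrased accordingly.
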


(The case where $A=0$ was proven by Behrens and Golla \cite[Proposition 3.8]{BehrensGollaCorrection}.)

\begin{proof}
To begin, note that $M_A = \F[H^1(Y)/A]$ is a commutative ring with unit, not just a module over $\HomRing{Y}$, and the projection $\HomRing{Y} \to M_A$ is a ring homomorphism. Let $n = b_1(Y)$ and $k = \rank A$; we assume $n>0$. For concreteness, let $a_1, \dots, a_n$ be a basis for $H^1(Y)$ such that $a_1, \dots, a_k$ are a basis for $A$. Let $\zeta_1, \dots, \zeta_n$ be the dual basis for $H_1(Y)/\Tors$, so that $A^\perp = \Span(\zeta_{k+1}, \dots, \zeta_n)$.

Let $C_* = \CF^\infty(Y,\spincs;M_A)$, with differential denoted by $\partial$. As a simplification, let us shift the homological grading on $C_*$ so that it lies in $\Z$ (rather than $\Z+q$ for some rational number $q$). Furthermore, if $A=0$,  so that $\HFi(Y,\spincs;M_A) \cong \F[U,U^{-1}]$, we assume that the nonzero groups are in even grading. If we consider $\F$ as an $M_A$--module, where each element of $H^1(Y)/A$ acts as the identity, then by definition, $H_q(C_*) = \HFi_q(Y,\spincs; M_A)$, while $H_q(C_* \otimes_{M_A} \F) = \HFi_q(Y,\spincs)$.

Since the untwisted $\HFi(Y,\spincs)$ is standard, we have
\[
H_q(C \otimes_{M_A} \F) \cong \F^{2^{n-1}}.
\]
By Theorem \ref{thm: HFi-standard}, if $k=0$, we have
\begin{equation} \label{eq: Hq(C)k=0}
H_q(C_*) \cong
\begin{cases}
  \F & q \text{ even} \\
  0 & q \text{ odd},
\end{cases}
\end{equation}
while if $k>0$, we have
\begin{equation} \label{eq: Hq(C)k>0}
H_q(C_*) \cong \F^{2^{k-1}}
\end{equation}
for all $q$. As in Remark \ref{rmk: module-structure}, right now we only know that the isomorphisms \eqref{eq: Hq(C)k=0} and \eqref{eq: Hq(C)k>0} hold on the level of groups; we shall see shortly that they hold on the level of $M_A$--modules as well.

Consider the following commutative diagram:
\begin{equation} \label{eq: coeff-change}
\xymatrix@C=0.75in{
\HFi(Y,\spincs;M_A) \ar[r]^{\pi_{M_A}} \ar[d]_{\otimes 1} & \HFp(Y,\spincs; M_A) \ar[d]_{\otimes 1} \\
\HFi(Y,\spincs;M_A) \otimes_{M_A} \F \ar[r]^{\pi_{M_A} \otimes 1} \ar[d]_{g^\infty} & \HFp(Y,\spincs; M_A) \otimes_{M_A} \F \ar[d]_{g^+} \\
\HFi(Y,\spincs;\F) \ar[r]^{\pi_\F} & \HFp(Y,\spincs; \F) }
\end{equation}
Here $\pi_{M_A}$ and $\pi_\F$ are the usual maps $\HFi \to \HFp$, and $g^\infty$ and $g^+$ are the natural change-of-coefficient maps. As above, let $\KK^{A^\perp} \HFi(Y,\spincs;\F)$ denote the subspace of $\HFi(Y,\spincs;\F)$ consisting of all $x \in \HFi(Y, \spincs;\F)$ for which $\zeta \cdot x = 0$ for all $\zeta \in A^\perp$, and let $J^+(Y,\spincs,A^\perp)$ denote the image of the restriction of $\pi_\F$ to $\KK^{A^\perp} \HFi(Y,\spincs;\F)$. The invariant $d(Y,\spincs, A^\perp)$ is defined to be the minimal grading in which the induced map
\[
\bar \pi_\F \co \QQ^{H_1(Y)/A^\perp} \KK^{A^\perp} \HFi(Y,\spincs;\F) \to \QQ^{H_1(Y)/A^\perp}  J^+(Y,\spincs,A^\perp)
\]
is nontrivial.

\begin{claim} \label{claim: MA-action}
The upper-left vertical map in \eqref{eq: coeff-change} is an isomorphism; equivalently, the action of $M_A$ on $\HFi(Y,\spincs;M_A)$ is trivial.
\end{claim}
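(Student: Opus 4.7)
The plan is to exploit the universal coefficient spectral sequence for the coefficient change $M_A \to \F$, combined with the rank computations from Theorem \ref{thm: HFi-standard}. Since $\CFi(Y, \spincs; \HomRing{Y})$ is a free chain complex of $\HomRing{Y}$-modules, its tensor product
\[
\CFi(Y, \spincs; M_A) = \CFi(Y, \spincs; \HomRing{Y}) \otimes_{\HomRing{Y}} M_A
\]
is free over $M_A$, and tensoring it further with $\F$ over $M_A$ recovers $\CFi(Y, \spincs; \F)$. There is therefore a convergent spectral sequence
\[
E^2_{p,q} = \Tor_p^{M_A}(\HFi_q(Y, \spincs; M_A), \F) \Rightarrow \HFi_{p+q}(Y, \spincs; \F).
\]
The strategy is to upper-bound the total $\F[U, U^{-1}]$-rank of $E^2$ using the $M_A$-action, then lower-bound it using the rank of the abutment, and show that the two bounds force the action to be trivial.

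For the upper bound, I will identify $M_A$ with the Laurent polynomial ring $\F[t_1^{\pm 1}, \dots, t_m^{\pm 1}]$ where $m = b_1(Y) - \rank(A)$, and compute $\Tor^{M_A}_*(N, \F)$ via the Koszul resolution of $\F$. For any $\F[U, U^{-1}]$-module $N$ on which the $t_i$ act by commuting, grading-preserving, $\F[U, U^{-1}]$-linear automorphisms $T_1, \dots, T_m$, this Koszul complex is $N \otimes_\F \Lambda^* \F^m$, with total $\F[U, U^{-1}]$-rank $2^m \, \rank_{\F[U, U^{-1}]}(N)$, and its homology therefore satisfies
\[
\sum_p \rank_{\F[U, U^{-1}]} \Tor^{M_A}_p(N, \F) \le 2^m \, \rank_{\F[U, U^{-1}]}(N),
\]
with equality if and only if each $T_i - 1$ vanishes, i.e., if and only if the action of $M_A$ on $N$ is trivial. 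Applied to $N = \HFi(Y, \spincs; M_A)$, which has rank $2^{\rank(A)}$ over $\F[U, U^{-1}]$ by Theorem \ref{thm: HFi-standard}, this gives $\sum_p \rank E^2_{p, *} \le 2^{b_1(Y)}$.

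For the matching lower bound, I will invoke the standing hypothesis of Proposition \ref{prop: d-standard} that the triple cup product on $H^1(Y)$ vanishes identically, so that Theorem \ref{thm: HFi-standard} applies with $A = H^1(Y)$ and yields $\rank_{\F[U, U^{-1}]} \HFi(Y, \spincs; \F) = 2^{b_1(Y)}$. Convergence of the spectral sequence, together with the fact that differentials only decrease $E^r$-ranks, then gives $\sum_p \rank E^2_{p, *} \ge 2^{b_1(Y)}$. Equality must therefore hold throughout, so each $T_i$ acts as the identity and $M_A$ acts trivially on $\HFi(Y, \spincs; M_A)$. The main bookkeeping point, and the one place to be careful, is that the $M_A$-action and the Koszul differentials are all grading-preserving and $\F[U, U^{-1}]$-linear, so that the rank comparisons above genuinely respect the gradings on both sides of the spectral sequence and nothing is lost in summing over them.
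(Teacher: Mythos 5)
Your proposal is correct and follows essentially the same route as the paper: both arguments run the universal coefficient spectral sequence for $M_A \to \F$ via the Koszul resolution of $\F$ over $M_A$, and compare the total rank of the $E^1$/Koszul page (bounded using $\rank_{\F[U,U^{-1}]}\HFi(Y,\spincs;M_A)=2^{\rank A}$ from Theorem \ref{thm: HFi-standard}) with the rank $2^{b_1(Y)}$ of the abutment $\HFi(Y,\spincs;\F)$ to force all differentials to vanish. The paper phrases this as collapse at $E^1$ so that $E^1_{0,*}\to E^2_{0,*}$ is an isomorphism, while you phrase it as equality in the Koszul rank bound forcing each $t_i-1$ to act as zero; these are the same dimension count.
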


\begin{claim} \label{claim: g-infty}
The map $g^\infty$ is injective with image equal to $\KK^{A^\perp} \HFi(Y,\spincs;\F)$.
\end{claim}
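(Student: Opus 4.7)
My plan has three steps: constrain the image of $g^\infty$ using $H_1$-equivariance, identify source and kernel as free $\F[U,U^{-1}]$-modules of rank $2^k$, and obtain injectivity via a degenerate Koszul spectral sequence argument powered by Claim \ref{claim: MA-action}, concluding by a rank count.

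For step (i), the chain-level $H_1$-action $\AA_\zeta$ and $U$-action are defined on $\CF^\infty(Y,\spincs;\HomRing{Y})$ and commute with tensoring with any $\HomRing{Y}$-module, so $g^\infty$ is $H_1$-equivariant and $U$-linear. Since $A^\perp \subseteq Z_{M_A}^\perp$, each $\zeta \in A^\perp$ acts as zero on $\HFi(Y,\spincs;M_A)$ and hence on the source, forcing $\im(g^\infty) \subseteq \KK^{A^\perp} \HFi(Y,\spincs;\F)$. For step (ii), Claim \ref{claim: MA-action} canonically identifies the source with $\HFi(Y,\spincs;M_A) \cong \Lambda^*(A) \otimes \F[U,U^{-1}]$ (Theorem \ref{thm: HFi-standard}), of rank $2^k$. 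Lidman's standard form $\HFi(Y,\spincs;\F) \cong \Lambda^*(H^1(Y)) \otimes \F[U,U^{-1}]$ together with non-degeneracy of the interior product give $\KK^{A^\perp} \cong \Lambda^*(A) \otimes \F[U,U^{-1}]$, also of rank $2^k$.

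For step (iii), I would apply the universal coefficient spectral sequence arising from the Koszul resolution of $\F$ over $M_A$ by the regular sequence $\{t_{k+1} - 1, \dots, t_n - 1\}$:
\[
E^2_{p,q} = \Tor_p^{M_A}(\HFi_q(Y,\spincs;M_A), \F) \ \Longrightarrow \ \HFi_{p+q}(Y,\spincs;\F).
\]
By Claim \ref{claim: MA-action} each $t_i - 1$ acts trivially on $\HFi(Y,\spincs;M_A)$, so all Koszul differentials on $E^2$ vanish, giving $E^2_{p,*} \cong \Lambda^p(\F^r) \otimes \HFi(Y,\spincs;M_A)$ of total rank $2^{r+k} = 2^n$, matching $\rank \HFi(Y,\spincs;\F)$. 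The SS therefore degenerates at $E^2$, and the $p = 0$ edge map---which is precisely $g^\infty$ after the identification $\Tor_0^{M_A}(\HFi(Y,\spincs;M_A),\F) = \HFi(Y,\spincs;M_A) \otimes_{M_A} \F$---realizes the source as the bottom filtration piece $F^0 \subseteq \HFi(Y,\spincs;\F)$, hence is injective. Combined with steps (i) and (ii), $\im(g^\infty)$ has rank $2^k$ and lies in $\KK^{A^\perp}$ of the same rank, so the two coincide.

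I expect the main subtlety to be rigorously identifying $g^\infty$ with the edge map of the Koszul spectral sequence and tracking the gradings through this identification. As a fallback, one can observe that both the source and $\KK^{A^\perp}$ are cyclic free rank-one modules over $\Lambda^*(H_1(Y)/A^\perp) \otimes \F[U,U^{-1}]$, generated by the top form $\omega_A \in \Lambda^k(A)$ in the same grading; equivariance then forces $g^\infty$ to be either zero or an isomorphism, and nonvanishing at $\omega_A$ can be established by mimicking the cobordism induction used to prove Theorem \ref{thm: HFi-standard}, terminating at the trivial base case $A = H^1(Y)$.
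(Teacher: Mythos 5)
Your proof is correct and follows essentially the same route as the paper: the universal coefficient spectral sequence for the Koszul resolution of $\F$ over $M_A$, identification of $g^\infty$ with the edge map onto the bottom filtration step, $H_1$-equivariance to place the image inside $\KK^{A^\perp}\HFi(Y,\spincs;\F)$, and a rank count to conclude. The only organizational difference is that the paper collapses the spectral sequence already at $E^1$ by a direct gradingwise dimension count (which yields Claim \ref{claim: MA-action} as a byproduct), whereas you take Claim \ref{claim: MA-action} as input to compute $E^2$ and degenerate there; both are valid.
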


Assuming these two claims, it follows that we have a commutative diagram
\[
\xymatrix{
\HFi(Y,\spincs;M_A) \ar[r]^{\pi_{M_A}} \ar[d]_{\cong} & \HFp(Y,\spincs; M_A) \ar[d] \\
\KK^{A^\perp} \HFi(Y,\spincs;\F) \ar[r]^-{\pi_\F} & J^+(Y,\spincs, A^\perp)
}
\]
which then descends to
\[
\xymatrix{
\QQ \HFi(Y,\spincs;M_A) \ar[r]^{\pi_{M_A}} \ar[d]_{\cong} & \QQ \HFp(Y,\spincs; M_A) \ar[d] \\
\QQ \KK^{A^\perp} \HFi(Y,\spincs;\F) \ar[r]^-{\pi_\F} & \QQ J^+(Y,\spincs, A^\perp).
}
\]
By comparing the definitions of the two $d$ invariants, it is then easy to see that \eqref{eq: d-standard} holds. \eqref{eq: d-standard-dbot} and \eqref{eq: dsh-standard} then follow from \cite[Proposition 3.4]{LevineRubermanCorrection}.

To prove Claims \ref{claim: MA-action} and \ref{claim: g-infty}, we use the universal coefficients spectral sequence, which we explain in some detail because morphisms of spectral sequences can be confusing. To begin, take a free resolution of $\F$ as an $M_A$--module:
\[
0 \xleftarrow{} \F \xleftarrow{d_0} F_0 \xleftarrow{d_1} F_1 \xleftarrow{d_2} \dots \xleftarrow{d_{n-k}} F_{n-k} \xleftarrow{} 0,
\]
where $F_p = M_A^{\binom{n-k}{p}}$. Consider the complex
\[
\CC_s = \bigoplus_{p+q=s} C_q \otimes F_p
\]
(nonzero only when $0 \le p \le n-k$) with differential $D_s \co \CC_s \to \CC_{s-1}$ given by
\[
D_s = \sum_{p+q=s} (-1)^s \partial_q \otimes d_p.
\]
Observe that $H_s(\CC_*) \cong H_s(C_* \otimes_{M_A} \F) \cong \HFi_s(Y,\spincs; \F)$.

The spectral sequence comes from considering the $p$ filtration on $\CC_*$, so that the differential on the $E^r$ has $(p,q)$--bigrading $(-r,r-1)$. The $E^1$ page is given by
\begin{equation} \label{eq: E1}
E^1_{p,q} \cong H_q(C_*) \otimes_{M_A} F_p \cong H_q(C_*)^{\binom{n-k}{p}}
\end{equation}
and the $E^2$ page is given by
\begin{equation} \label{eq: E2}
E^2_{p,q} \cong \Tor^{M_A}_p(H_q(C_*), \F).
\end{equation}
In particular, in the $p=0$ column, we have
\begin{equation} \label{eq: E20}
E^1_{0,*} \cong \HFi(Y,\spincs;M_A) \quad \text{and} \quad E^2_{0,*} \cong \HFi(Y,\spincs;M_A) \otimes_{M_A} \F,
\end{equation}
and the upper-left vertical map in \eqref{eq: coeff-change} is the natural quotient map. Furthermore, there is a filtration
\begin{equation} \label{eq: homology-filt}
0 = G_s^{-1} \subset G_s^0 \subset G_s^1 \subset \cdots \subset G_s^{n-k} = H_s(\CC_*)
\end{equation}
so that
\begin{equation} \label{eq: Einfty}
E^\infty_{p,q} \cong G_{p+q}^p/G_{p+q}^{p-1};
\end{equation}
in particular, the $p=0$ column $E^\infty_{0,*}$ is identified with the subspace $G_*^0$. The map $g^\infty$ from \eqref{eq: coeff-change} is given by the identification \eqref{eq: E20}, followed by the successive quotients taking $E^2_{0,*} \to E^\infty_{0,*}$, followed by the inclusion of $G_*^0$ into $H_*(\CC_*) \cong \HFi(Y,\spincs; \F)$.

By \eqref{eq: E1}, we have
\begin{equation} \label{eq: E1pq}
\dim_\F E^1_{p,q} = \binom{n-k}{p} \dim_\F H_q(C_*) =
\begin{cases}
\binom{n-k}{p} & k=0, \, q \text{ even} \\
0 & k=0, \, q \text{ odd} \\
\binom{n-k}{p} 2^{k-1} & k>0.
\end{cases}
\end{equation}
Summing over $p+q=s$, we see that
\[
\sum_{p+q=s} \dim_\F E^1_{p,q} = 2^{n-1} = \dim_\F H_s(\CC_*) = \sum_{p+q=s} \dim_\F E^\infty_{p,q},
\]
which implies that the spectral sequence collapses at the $E^1$ page. Looking in the $p=0$ column, we see that the successive quotient maps
\[
E^1_{0,q} \to E^2_{0,q} \to \dots \to E^\infty_{0,q}
\]
are all isomorphisms, which proves Claim \ref{claim: MA-action} and the injectivity statement of Claim \ref{claim: g-infty}.

It remains to identify $G^0_*$ with $\KK^{A^\perp} \HFi(Y,\spincs;M)$. For each $i=1, \dots, n$, there is a chain map $\AA_{\zeta_i} \co C_* \to C_{*-1}$; these give rise to the action of $H_1$. As noted above, the maps $\AA_{\zeta_{k+1}}, \dots, \AA_{\zeta_n}$ are null-homotopic (see \cite[Remark 5.2]{JabukaMarkProduct}), but they are still defined on the chain level. Indeed, we extend each $A_{\zeta_i}$ to a map on $\CC_*$ by tensoring with the identity map on $F_*$. The maps $\AA_{\zeta_i *}$ induced on the homology of $\CC_*$ (which, as noted above, is isomorphic to $\HFi(Y,\spincs;\F)$) then generate the action of $\Lambda^*(H_1(Y)/\Tors)$ on $\HFi(Y,\spincs;\F)$; that is, $\zeta_i \cdot x = \AA_{\zeta_i *} (x)$.

Moreover, the restriction of $A_{\zeta_i *}$ to $G^0_*$ agrees with the action of $\zeta_i$ on $\HFi(Y, \spincs; M_A)$. For $i=k+1, \dots, n$, this action vanishes, so
\[
G_q^0 \subset \KK^{A^\perp}_q \HFi(Y,\spincs;\F)
\]
for each grading $q$. Because $\HFi(Y,\spincs;\F)$ is standard, we can see that $\KK^{A^\perp} \HFi(Y,\spincs;\F)$ is a free $\F[U,U^{-1}]$ module with
\[
\rank_{\F[U,U^{-1}]} \KK^{A^\perp} \HFi(Y,\spincs;\F) = \frac{1}{2^{n-k}} \rank_{\F[U,U^{-1}]} \HFi(Y,\spincs;\F) = 2^k.
\]
(Taking the kernel of each $\zeta_i$, for $i=k+1, \dots, n$ cuts down the rank by a factor of $2$.) If $k=0$, then $\KK^{A^\perp} \HFi(Y,\spincs; \F)$ is a single tower, with $0$ and $\F$ in alternating gradings; otherwise, $\KK^{A^\perp} \HFi(Y,\spincs; \F)$ has dimension $2^{k-1}$ in each grading. By \eqref{eq: E1pq}, we see that
\[
\dim_\F \KK^{A^\perp}_q \HFi(Y,\spincs;\F) = \dim_\F E^1_{0,q} = \dim_\F G^0_q,
\]
so $G^0_* = \KK^{A^\perp} \HFi(Y,\spincs;M)$ as required.
\end{proof}

\begin{figure}
\[
\xymatrix@R=0.2in@C=0.2in{
9/2 && U^{-2}a \ar@{-->}[dr]^{1-t}  \ar@{.>}[d]_{1-t^2} &
   &&& U^{-2}a \ar@{-->}[dr]^{1-t}  \ar@{.>}[d]_{1-t^2} & \\
7/2 && U^{-2}b \ar[dr] & U^{-1}c \ar@{-->}[d]^{1-t}
   &&& U^{-2}b \ar[dr] & U^{-1}c \ar@{-->}[d]^{1-t} \\
5/2 && U^{-1}a \ar@{-->}[dr]  \ar@{.>}[d] & U^{-1}d
   &&& U^{-1}a \ar@{-->}[dr]  \ar@{.>}[d] & U^{-1}d \\
3/2 && U^{-1}b \ar[dr] & c \ar@{-->}[d]
   &&& U^{-1}b \ar[dr] & c \ar@{-->}[d]\\
1/2 && a \ar@{-->}[dr]  \ar@{.>}[d] & d
   &&& a \ar@{.>}[d] & d \\
-1/2 && b \ar[dr] & U c \ar@{-->}[d]
    &&& b  &  \\
-3/2 && U a \ar[dr] \ar@{.>}[d] & U d
    &&  &  &  \\
-5/2 && U b \ar[dr] & U^2 c \ar@{-->}[d]
    &&  &  &  \\
-7/2 && & U^2 d
    && & & \\
}
\]
\caption{$\CFi$ (left) and $\CFp$ (right) for a hypothetical manifold $(Y,\spincs)$ with $\dtw(Y,\spincs; \HomRing{Y}) < \dbot(Y,\spincs)$, as in Example \ref{ex: not-equal}.} \label{fig: not-equal}
\end{figure}

\begin{example} \label{ex: not-equal}
Although we do not know of an actual manifold $Y$ for which equality fails to hold in \eqref{eq: d-standard}, this seems unlikely to be true in general. Figure \ref{fig: not-equal} represents the totally twisted complexes $\CFi(Y,\spincs;\HomRing{Y})$ and $\CFp(Y,\spincs;\HomRing{Y})$ for a hypothetical $(Y,\spincs)$ with $b_1(Y)=1$. Writing $\HomRing{Y} = \F[t^{\pm1}]$, we view $\CFi(Y,\spincs;\HomRing{Y})$ as a complex over $\F[t^{\pm1}, U^{\pm 1}]$ generated by $a, b, c, d$. A solid arrow represents a coefficient of $1$ in the differential, a dashed arrow represents $1-t$, and a dotted arrow represents $1-t^2 = (1-t)^2$. The pattern repeats infinitely in both directions in $\CFi$ and infinitely upward in $\CFp$. The numbers at the left represent the Maslov gradings, which we have chosen in analogy with $S^1 \times S^2$.

Clearly, $\HFi(Y,\spincs;\HomRing{Y})$ is isomorphic to $\F[U,U^{-1}]$, generated as a $\F[t^{\pm1}, U^{\pm1}]$--module by $(1-t)b + Uc$, with the relation $(1-t)((1-t)b+Uc)=0$. Also, $\HFp(Y,\spincs;\HomRing{Y})$ is generated as a $\F[t^{\pm1}]$--module by $\{U^n ((1-t)b+Uc) \mid n \le -1\}$ along with $b$, with the relations that $(1-t) U^n((1-t)b+Uc)=0$ and $(1-t)^2 b=0$. We thus see that $\dtw(Y,\spincs; \HomRing{Y}) = -1/2$. (Notice that the short exact sequence
\[
0 \to \im(\pi) \to \HFp(Y,\spincs;\HomRing{Y}) \to \HFp_{\text{red}}(Y,\spincs; \HomRing{Y}) \to 0
\]
does not split over $\HomRing{Y}$, although it does split over $\F$.)

On the other hand, we can also view the same figure as representing the untwisted complexes $\CFi(Y,\spincs;\F)$ and $\CFp(Y,\spincs;\F)$. Now the solid arrows represent the differential, the dashed arrows represent the chain map $\AA_\zeta$ associated to a generator of $H_1(Y)$, and the dotted arrows represent a chain null-homotopy of $\AA_\zeta^2$. Here, $\HFi(Y,\spincs;\F)$ is generated over $\F[U,U^{-1}]$ by $a$ and $c$, with $c$ generating the ``bottom tower'' $\KK^{\HomRing{Y}} \HFi(Y,\spincs;\F)$. Also, $\HFp(Y,\spincs;\F)$ is generated as a $\F$--module by
$\{U^n a, U^n c \mid n \le 0\} \cup \{b\}$. We therefore deduce that $\dbot(Y,\spincs)=3/2$.

Moreover, it is not hard to modify the construction to make the difference $\dbot(Y,\spincs) - \dtw(Y,\spincs;\HomRing{Y})$ arbitrarily large.

As noted above in Example \ref{ex: trefoil}, Behrens and Golla \cite[Example 3.9]{BehrensGollaCorrection} proved that for any knot $K \subset S^3$, $\ul d(S^3_0(K)) = \dbot(S^3_0(K))$. Thus, a manifold for which \eqref{eq: d-standard} is a strict inequality, as in the putative example just discussed, would not be homology cobordant to $0$-surgery on any knot in $S^3$.
\end{example}

\begin{remark} \label{rmk: neg-def}
Proposition \ref{prop: d-standard} implies that the twisted $d$ invariants can in principle give stronger constraints on intersection forms of $4$-manifolds bounded by $Y$ than the untwisted $d$ invariants from \cite{LevineRubermanCorrection}. For instance, if $Z$ is a negative semi-definite $4$-manifold bounded by $Y$ such that the restriction map $H^1(Z) \to H^1(Y)$ is trivial, and $\spinct$ is a spin$^c$ structure on $Z$ whose restriction to $Y$ is torsion, Ozsv\'ath and Szab\'o \cite[Theorem 9.15]{OSzAbsolute} showed that
\[
c_1(\spinct)^2 + b_2^-(Z) \le 4 \dbot(Y,\spinct|_{Y}) + 2b_1(Y),
\]
and Behrens and Golla \cite[Theorem 1.1]{BehrensGollaCorrection} proved an analogous statement with $d(Y,\spinct|_Y; \HomRing{Y})$ in place of $\dbot(Y,\spinct|_Y)$. The latter result is thus a potentially stronger bound. Likewise, for any summand $A \subset H^1(Y)$, it is not hard to prove a stronger analogue of \cite[Theorem 4.7]{LevineRubermanCorrection} using $\dtw(Y,\spincs;M_A)$ in place of $d(Y,\spincs,A^\perp)$ (where $A$ is chosen such that $A^\perp=V$).
\end{remark}

\begin{definition} \label{def: d-simple}
For a $3$-manifold $Y$ and a torsion spin$^c$ structure $\spincs$ on $Y$, we say that $(Y,\spincs)$ is \emph{$d$-simple} if the triple cup product on $H^1(Y)$ vanishes identically (so that $\HFi(Y,\spincs;\F)$ is standard) and for every subspace $A \subset H^1(Y)$, equality holds in \eqref{eq: dsh-standard}, i.e.
\begin{equation} \label{eq: dsh-dbot}
\dsh(Y,\spincs;A) = \dbot(Y,\spincs) + \frac{b_1(Y)}{2}.
\end{equation}
We say that $Y$ is $d$-simple if $(Y,\spincs)$ is $d$-simple for each torsion spin$^c$ structure $\spincs$ on $Y$.
\end{definition}

If $(Y,\spincs)$ is $d$-simple, then for each $A \subset H^1(Y)$, \eqref{eq: d-standard} is an equality, meaning that
\[
d(Y,\spincs, A^\perp) = \dbot(Y,\spincs) + \rank(A).
\]
In other words, the untwisted $d$ invariants of $(Y,\spincs)$ are simple in the sense of \cite[Corollary 3.5]{LevineRubermanCorrection}. In particular, we have
\[
\dtop(Y,\spincs) = \dbot(Y,\spincs) + b_1(Y),
\]
and hence
\begin{equation} \label{eq: dsh-dtop}
\dsh(Y,\spincs;M_A) = \dtop(Y,\spincs) - \frac{b_1(Y)}{2}.
\end{equation}

\subsection{Orientation reversal} \label{subsec: d-reversal}

Unlike with the original $d$ invariant for rational homology spheres, Examples \ref{ex: S1xS2} and \ref{ex: trefoil} show that $\dsh(Y,\spincs; M_A)$ does not determine $\dsh(-Y,\spincs;M_A)$. The only relation between these quantities occurs in the extremal cases where $A=0$ or $A=H^1(Y)$:

\begin{proposition} \label{prop: reversal}
Let $Y$ be a closed, oriented $3$-manifold and let $\spincs$ be a torsion spin$^c$ structure on $Y$. Then
\begin{equation} \label{eq: reversal-totally}
\dsh(Y,\spincs; \HomRing{Y}) + \dsh(-Y, \spincs; \HomRing{-Y}) \ge 0.
\end{equation}
If the triple cup product on $H^1(Y)$ vanishes, so that $\HFi(Y,\spincs;\F)$ is standard, then
\begin{equation} \label{eq: reversal-untwisted}
\dsh(Y,\spincs; \F) + \dsh(-Y, \spincs; \F) \le 0.
\end{equation}
\end{proposition}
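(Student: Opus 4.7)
The two inequalities concern the extremal cases $A = H^1(Y)$ and $A = 0$ of Definition \ref{def: d}, and despite their parallel appearance I plan to prove them by quite different arguments.

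For the untwisted inequality \eqref{eq: reversal-untwisted}, I would reduce to known facts about $\dtop$ and $\dbot$. The identification $\dtw(Y,\spincs;\F) = \dtop(Y,\spincs)$ together with the shift formula \eqref{eq: d-shifted} reformulates the target as
\[
\dtop(Y,\spincs) + \dtop(-Y,\spincs) \le b_1(Y).
\]
I would then invoke two inputs from \cite{LevineRubermanCorrection}: the Poincar\'e-duality symmetry $\dtop(-Y,\spincs) = -\dbot(Y,\spincs)$, which exchanges top and bottom under orientation reversal, and the structural bound $\dtop(Y,\spincs) - \dbot(Y,\spincs) \le b_1(Y)$, which is immediate from the standard model $\HFi(Y,\spincs;\F) \cong \Lambda^* H^1(Y) \otimes \F[U,U^{-1}]$ (the top tower $\Lambda^{b_1}H^1$ sits exactly $b_1(Y)$ grading units above the bottom tower $\Lambda^0 H^1$). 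Combined, these yield $\dtop(Y) + \dtop(-Y) = \dtop(Y) - \dbot(Y) \le b_1(Y)$, which after undoing the shift is \eqref{eq: reversal-untwisted}.

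For the totally twisted inequality \eqref{eq: reversal-totally}, such a reduction is not available, because $\dtw(Y;\HomRing{Y})$ is only bounded above, not below, by $\dbot(Y)$ (cf.\ Example \ref{ex: not-equal}). Instead I would argue directly via Poincar\'e duality in Heegaard Floer with twisted coefficients. Since $\HFi(Y,\spincs;\HomRing{Y}) \cong \F[U,U^{-1}]$ is a single tower (Remark \ref{rmk: totally-twisted}), the setup is formally analogous to a rational homology sphere, and $\dtw(Y,\spincs;\HomRing{Y})$ equals the minimum grading of $\im(\pi\co\HFi\to\HFp)$. The orientation-reversal isomorphism
\[
\CFp(-Y,\spincs;\HomRing{-Y}) \cong \Hom_{\F[U]}\bigl(\CFm(Y,\spincs;\HomRing{Y}),\, \F[U,U^{-1}]/\F[U]\bigr),
\]
interpreted via the canonical identification $\HomRing{Y} \cong \HomRing{-Y}$, induces a perfect graded pairing on homology that, after $U$-completion, relates the single towers $\HFic(\pm Y;\HomRing{\pm Y}) \cong \F[[U,U^{-1}]$. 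Chasing $\im(\iota_Y) \subset \HFic(Y;\HomRing{Y})$ to its dual quotient on the $-Y$ side and reading off gradings is designed to yield $\dtw(Y;\HomRing{Y}) + \dtw(-Y;\HomRing{-Y}) \ge -b_1(Y)$, which is equivalent to \eqref{eq: reversal-totally}.

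The substantive step, and the main obstacle, is pinning down that the absolute-grading shift in the twisted Poincar\'e-duality pairing is exactly $-b_1(Y)$; this requires careful bookkeeping of the Heegaard-diagram orientation reversal, the torsion spin$^c$ normalization, and the grading convention on the coefficient module $\HomRing{Y}$. As a shortcut, if Behrens--Golla's analogous orientation-reversal statement for $\ul d = \dtw(\cdot;\HomRing{Y})$ in \cite{BehrensGollaCorrection} applies at the generality we need, \eqref{eq: reversal-totally} can simply be quoted from there; otherwise the duality computation outlined above must be carried out by hand.
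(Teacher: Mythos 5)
Your argument for the untwisted inequality \eqref{eq: reversal-untwisted} is the same as the paper's: write $\dtw(\pm Y,\spincs;\F)=\dtop(\pm Y,\spincs)$, use $\dtop(-Y,\spincs)=-\dbot(Y,\spincs)$, and bound $\dtop(Y,\spincs)-\dbot(Y,\spincs)\le b_1(Y)$ (the paper cites \cite[Lemma 3.5]{LevineRubermanStrleNonorientable} for this; it is not literally ``immediate'' from the standard form of $\HFi$, since $\dtop$ and $\dbot$ are defined via images in $\HFp$, but the cited lemma is exactly what you need). That half is fine.

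The totally twisted inequality \eqref{eq: reversal-totally} is where your proposal has a genuine gap. The paper's route is much more elementary and avoids duality entirely: by Behrens--Golla's connected-sum additivity \cite[Proposition 3.7]{BehrensGollaCorrection}, $\dtw(Y,\spincs;\HomRing{Y})+\dtw(-Y,\spincs;\HomRing{-Y})=\dtw(Y\conn -Y,\spincs\conn\spincs;\HomRing{Y\conn -Y})$; then $Y\conn -Y$ bounds $(Y\minus B^3)\times[0,1]$, which has identically vanishing intersection form, so their semi-definiteness theorem \cite[Theorem 1.1]{BehrensGollaCorrection} gives $0\le \dtw(Y\conn -Y,\ldots)+b_1(Y)$, which is the claim. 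Your proposed Poincar\'e-duality computation is not just ``careful bookkeeping'' away from working: a clean duality between the single towers $\HFic(\pm Y;\HomRing{\pm Y})$ would naturally produce an \emph{equality} of the form $\dsh(Y;\HomRing{Y})=-\dsh(-Y;\HomRing{-Y})$, and that is false --- Example \ref{ex: trefoil} gives $\dsh(Y;\HomRing{Y})=0$ and $\dsh(-Y;\HomRing{-Y})=2$ for $0$-surgery on the trefoil. The slack comes from the $\HomRing{Y}$-module structure of $\im(\pi)\subset\HFp$, which (as Example \ref{ex: not-equal} illustrates) need not split off; your outline never identifies where in the duality chase only an inequality, in the correct direction, survives. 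Finally, the ``shortcut'' of quoting an orientation-reversal statement from \cite{BehrensGollaCorrection} does not rescue the argument, because what they prove is the additivity and semi-definiteness results above, not a duality formula; the orientation-reversal inequality is precisely what one assembles from those two inputs.
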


\begin{proof}
Behrens and Golla \cite[Proposition 3.7]{BehrensGollaCorrection} showed that the totally twisted $d$ invariant is additive under connected sums; thus,
\[
\dtw(Y,\spincs; \HomRing{Y}) + \dtw(-Y, \spincs; \HomRing{-Y}) = \dtw(Y \conn {-Y}, \spincs \conn \spincs; \HomRing{Y \conn {-Y}}).
\]
Note that $Y \conn {-Y}$ is the boundary of the four-manifold $Z = (Y \minus B^3) \times [0,1]$, whose intersection form vanishes identically. Applying \cite[Theorem 1.1]{BehrensGollaCorrection} (see Remark \ref{rmk: neg-def} above), we deduce that
\[
0 \le \dtw(Y \conn {-Y}, \spincs \conn \spincs; \HomRing{Y \conn {-Y}}) + b_1(Y).
\]
This implies \eqref{eq: reversal-totally}.

For the second statement, we have $\dtw(Y,\spincs; \F) = \dtop(Y,\spincs)$ by definition and $\dtw(-Y,\spincs;\F) = \dtop(-Y, \spincs) = -\dbot(Y, \spincs)$ by \cite[Proposition 3.7]{LevineRubermanStrleNonorientable}. Moreover, by \cite[Lemma 3.5]{LevineRubermanStrleNonorientable}, we have $\dtop(Y,\spincs) \le \dbot(Y,\spincs) + b_1(Y)$
Thus,
\[
\dtw(Y,\spincs; \F) + \dtw(-Y, \spincs; \F) \le b_1(Y),
\]
which implies \eqref{eq: reversal-untwisted}.
\end{proof}

Motivated by Proposition \ref{prop: reversal}, we make the following definition:

\begin{definition} \label{def: d-symmetric}
Given a closed, oriented $3$-manifold $Y$ and a torsion spin$^c$ structure $\spincs$ on $Y$, we say that $(Y,\spincs)$ is \emph{$d$-symmetric} if for every summand $A \subset H^1(Y)$ on which the triple cup product vanishes, we have
\begin{equation} \label{eq: d-simple-reversal}
d(-Y,\spincs;M_A) =  -d(Y,\spincs; M_A).
\end{equation}
We say $Y$ is $d$-symmetric if $(Y,\spincs)$ is $d$-symmetric for every torsion spin$^c$ structure $\spincs$ on $Y$.
\end{definition}

Combining \ref{eq: dsh-dbot} and \ref{eq: dsh-dtop} with \cite[Proposition 3.7]{LevineRubermanStrleNonorientable}, we immediately deduce that if both $(Y,\spincs)$ and $(-Y,\spincs)$ are $d$-simple, then they are both $d$-symmetric. (We do not know of an example where $(Y,\spincs)$ is $d$-simple while $(-Y,\spincs)$ is not.)

\subsection{Connected sums}

The behavior of twisted $d$-invariants under connected sums is also potentially more complicated than in the untwisted setting. Given summands $A_1 \subset H^1(Y_1)$ and $A_2 \subset H^1(Y_2)$, $A_1 \oplus A_2$ is naturally a summand of $H^1(Y_1 \conn Y_2)$.  Evidently, if the triple cup product vanishes on each $A_i$, then it vanishes on $A_1 \oplus A_2$ as well.  Adapting the usual proof of additivity of $d$ invariants (see \cite[Theorem 4.3]{OSzAbsolute}, \cite[Proposition 3.8]{LevineRubermanStrleNonorientable}, \cite[Proposition 4.3]{LevineRubermanCorrection}), it is straightforward to see that
\begin{equation} \label{eq: conn-sum-ineq}
\dtw(Y_1 \conn Y_2, \spincs_1 \conn \spincs_2, M_{A_1 \oplus A_2}) \ge \dtw(Y_1, \spincs_1, M_{A_1}) + \dtw(Y_2, \spincs_2, M_{A_2}).
\end{equation}
Proving the reverse inequality requires orientation reversal, which is not available. However, if $Y_1$ and $Y_2$ are $d$-simple, then we have:
\begin{align*}
\dtw(Y_1 \conn Y_2, \spincs_1 \conn \spincs_2, M_{A_1 \oplus A_2}) &\le d(Y_1 \conn Y_2, \spincs_1 \conn \spincs_2, (A_1 \oplus A_2)^{\perp}) \\
&= d(Y_1 \conn Y_2, \spincs_1 \conn \spincs_2, A_1^\perp \oplus A_2^{\perp}) \\
&= d(Y_1, \spincs_1, A_1^{\perp}) + d(Y_2, \spincs_2, A_2^{\perp}) \\
&= \dtw(Y_1, \spincs_1, M_{A_1}) + \dtw(Y_2, \spincs_2, M_{A_2}),
\end{align*}
so equality holds.

\begin{proposition} \label{prop: QHS}
If $Y$ is of the form $Q \conn n(S^1 \times S^2)$, where $Q$ is a rational homology sphere and $n \ge 0$, then $Y$ is $d$-simple and therefore $d$-symmetric. Indeed, if $\spincs = \spinct \conn \spinct_0 \conn \dots \conn \spinct_0$, where $\spinct$ is a spin$^c$ structure on $Q$ and $\spinct_0$ is the unique torsion spin$^c$ structure on $S^1 \times S_2$, then for any subspace $A \subset H^1(Y)$, we have
\[
\dsh(Y, \spincs; M_A) = d(Q, \spinct).
\]
\end{proposition}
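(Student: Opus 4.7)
The plan is to sandwich $\dsh(Y,\spincs;M_A)$ between two quantities equal to $d(Q,\spinct)$, using the already-established upper bound from Proposition \ref{prop: d-standard} and the connected-sum lower bound \eqref{eq: conn-sum-ineq}. Note first that because $H^1(Q)=0$ and the triple cup product on $H^1(S^1\times S^2)=\F$ vanishes trivially, the triple cup product on $H^1(Y)\cong\F^n$ vanishes identically, so $\HFi(Y,\spincs;\F)$ is standard and $\dtop$, $\dbot$ are defined.

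For the upper bound, additivity of $\dbot$ under connected sums (\cite[Proposition~4.3]{LevineRubermanCorrection}) combined with $\dbot(Q,\spinct)=d(Q,\spinct)$ and $\dbot(S^1\times S^2,\spinct_0)=-\tfrac12$ (Example \ref{ex: S1xS2}) gives $\dbot(Y,\spincs)=d(Q,\spinct)-n/2$. Then \eqref{eq: dsh-standard} yields
\[
\dsh(Y,\spincs;M_A)\ \le\ \dbot(Y,\spincs)+\frac{b_1(Y)}{2}\ =\ d(Q,\spinct).
\]

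For the lower bound, I would first reduce to the case that $A$ is spanned by a subset of the standard basis of $H^1(Y)$ coming from the connected-sum decomposition. The twisted $d$-invariants are diffeomorphism invariants of the pair $(Y,\spincs)$ in the sense that a self-diffeomorphism $\phi$ sends $\dtw(Y,\spincs;M_A)$ to $\dtw(Y,\phi^*\spincs;M_{\phi^*A})$; since the mapping class group of $\#_n(S^1\times S^2)$ surjects onto $GL(n,\Z)$ on $H_1$ (Laudenbach--Poénaru), extending by the identity on the $Q$ summand realizes every change of basis of $H^1(Y)$ by a self-diffeomorphism that fixes $\spincs = \spinct\conn\spinct_0\conn\cdots\conn\spinct_0$. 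So we may assume $A = A_1\oplus\cdots\oplus A_n$ with each $A_i\subset H^1(S^1\times S^2)$ equal to $0$ or all of $H^1(S^1\times S^2)$. Iterating \eqref{eq: conn-sum-ineq} then gives
\[
\dtw(Y,\spincs;M_A)\ \ge\ \dtw(Q,\spinct;\F) + \sum_{i=1}^n \dtw(S^1\times S^2,\spinct_0;M_{A_i}).
\]
Since $\dtw(Q,\spinct;\F)=d(Q,\spinct)$ and each summand on the right satisfies $\dsh(S^1\times S^2,\spinct_0;M_{A_i})=0$ by Example \ref{ex: S1xS2}, converting from $\dtw$ to $\dsh$ telescopes the shifts and yields $\dsh(Y,\spincs;M_A)\ge d(Q,\spinct)$.

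The two bounds together force equality $\dsh(Y,\spincs;M_A)=d(Q,\spinct)$, and since this equals $\dbot(Y,\spincs)+b_1(Y)/2$ for every admissible $A$, $(Y,\spincs)$ is $d$-simple in the sense of Definition \ref{def: d-simple}. Because $-Y$ is of the same form $(-Q)\conn n(S^1\times S^2)$, the identical argument shows $(-Y,\spincs)$ is also $d$-simple, so by the remark following Definition \ref{def: d-symmetric} the pair $(Y,\spincs)$ is $d$-symmetric as well. The main obstacle is the reduction to a coordinate subspace $A$; everything else is additivity. If one prefers to avoid invoking Laudenbach--Poénaru, an alternative is to observe that any direct-summand filtration $0\subset A\subset H^1(Y)$ is realized by some connected-sum decomposition of the $\#_n(S^1\times S^2)$ factor (choosing the meridional $2$-spheres compatibly with a chosen basis adapted to $A$), which suffices to apply the iterated form of \eqref{eq: conn-sum-ineq} directly.
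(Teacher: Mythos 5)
Your proposal is correct and follows essentially the same route as the paper: the lower bound comes from iterating \eqref{eq: conn-sum-ineq} after using a self-diffeomorphism of $\conn n(S^1\times S^2)$ (Laudenbach--Po\'enaru / handleslides) to put $A$ in coordinate form, and the upper bound comes from Proposition \ref{prop: d-standard} together with additivity of the untwisted correction terms. The only cosmetic difference is that the paper phrases the upper bound via additivity of the intermediate invariants $d(Y,\spincs,A^\perp)$ rather than of $\dbot$ directly; these are equivalent for $d$-simple summands.
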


\begin{proof}
Clearly, any rational homology sphere is $d$-simple, as is $S^1 \times S^2$. Given a subspace $A \subset \#n(S^1 \times S^2)$, there is a self-diffeomorphism of $\#n(S^1 \times S^2)$ such that the pullback of $A$ can be viewed as $A_1 \oplus \dots \oplus A_n$, where each $A_i$ is a subspace of $H^1$ of the $i\Th$ $S^1 \times S^2$ summand. (That is, any change of basis on $H^1(\#n (S^1 \times S^2))$ can be realized geometrically by handleslides.) The result then follows.
\end{proof}

\subsection{Congruence condition}

Given a closed, oriented $3$-manifold $Y$ and a torsion spin$^c$ structure $\spincs$, there is an invariant $\rho(Y,\spincs) \in \Q/2\Z$, defined to be the congruence class of
\begin{equation}\label{eq:rho}
\frac{c_1(\spinct)^2 - \sigma(W)}{4}
\end{equation}
where $(W,\spinct)$ is any spin$^c$ $4$--manifold with boundary $(Y,\spincs)$. (Here $\sigma(W)$ denotes the signature of $W$.) In this section we prove the following:

\begin{proposition} \label{prop: d-rho}
For any closed, oriented $3$-manifold $Y$, any torsion spin$^c$ structure $\spincs$, and any subspace $A \subset H^1(Y)$ on which the triple cup product vanishes, we have
\[
\dsh(Y,\spincs;M_A) \equiv \rho(Y,\spincs) \pmod {2\Z}.
\]
\end{proposition}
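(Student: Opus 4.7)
My plan is to prove the congruence in two stages: first, reduce it to the totally twisted case, and then handle that case by induction on $b_1(Y)$ using the surgery formula.

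\emph{Stage 1.} I will first show $\dsh(Y,\spincs;M_A) \equiv \dsh(Y,\spincs;\HomRing{Y}) \pmod{2\Z}$ for every admissible $A$. Write $\ul d = \dtw(Y,\spincs;\HomRing{Y})$ and let $q_0(A)$ denote the absolute grading of the $\Lambda^0(A) \otimes 1$ summand under the identification of Theorem~\ref{thm: HFi-standard}, so that $\dsh(Y,\spincs;M_A) = q_0(A) + b_1(Y)/2$. Consider the grading-preserving, $H_1(Y)$-equivariant coefficient-change map $f\co \HFi(Y,\spincs;\HomRing{Y}) \to \HFi(Y,\spincs;M_A)$ induced by the surjection $\HomRing{Y} \twoheadrightarrow M_A$. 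By Remark~\ref{rmk: totally-twisted} the source is $\F[U,U^{-1}]$ supported in a single coset modulo $2\Z$, so the $H_1(Y)$-action there vanishes for grading reasons. Hence $f$ carries the tower generator (in grading $\ul d$) into the kernel of the $H_1(Y)/A^\perp$-action on $\HFi(Y,\spincs;M_A)$, which is $\Lambda^0(A) \otimes \F[U,U^{-1}]$, supported in gradings $q_0(A) + 2\Z$. A universal-coefficient spectral sequence (in the spirit of the proof of Proposition~\ref{prop: d-standard}) computes $\HFi(Y,\spincs;M_A)$ from $\HFi(Y,\spincs;\HomRing{Y}) \otimes_{\HomRing{Y}} \Tor^*_{\HomRing{Y}}(-, M_A)$; it degenerates at $E^2$ by comparison of ranks with Theorem~\ref{thm: HFi-standard}, and identifies $f$ with the injective edge map. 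Since $f$ is injective (hence nonzero on the tower generator) and its image lies simultaneously in gradings $\ul d + 2\Z$ and $q_0(A) + 2\Z$, these cosets must coincide, giving $q_0(A) \equiv \ul d \pmod{2\Z}$.

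\emph{Stage 2.} It remains to prove $\ul d(Y,\spincs) + b_1(Y)/2 \equiv \rho(Y,\spincs) \pmod{2\Z}$, which I establish by induction on $b_1(Y)$. If $b_1(Y) = 0$, then $Y$ is a rational homology sphere and $\ul d = d(Y,\spincs)$, so the congruence reduces to the classical $d \equiv \rho \pmod{2\Z}$ of Ozsv\'ath--Szab\'o. If $b_1(Y) \ge 1$, I choose a knot $L \subset Y$ representing a primitive class in $H_1(Y)$ and perform integer surgery along $L$ to obtain a 3-manifold $Z$ with $b_1(Z) = b_1(Y) - 1$ such that $Y = Z_0(K)$ for some nullhomologous knot $K \subset Z$. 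Letting $\spinct_0$ denote the torsion extension of $\spincs$ to the 2-handle cobordism $W\co Z \to Y$ and $\spincs' = \spinct_0|_Z$, Proposition~\ref{prop: HFi-surgery} with $M = \HomRing{Z}$ yields an isomorphism $F^\infty_{W,\spinct_0}\co \HFi(Z,\spincs';\HomRing{Z}) \to \HFi(Y,\spincs;\HomRing{Y})$ that shifts absolute grading by $(c_1(\spinct_0)^2 - 2\chi(W) - 3\sigma(W))/4 = -\tfrac{1}{2}$, since $c_1(\spinct_0) = 0$, $\chi(W) = 1$, and $\sigma(W) = 0$. Therefore $\ul d(Y,\spincs) = \ul d(Z,\spincs') - \tfrac{1}{2}$, and together with $b_1(Y) = b_1(Z) + 1$ this gives $\ul d(Y,\spincs) + b_1(Y)/2 = \ul d(Z,\spincs') + b_1(Z)/2$. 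Moreover, for any spin$^c$ 4-manifold $(V,\spinct_V)$ bounding $(Z,\spincs')$, the union $V \cup_Z W$ bounds $(Y,\spincs)$ with unchanged $c_1^2$ and $\sigma$, so $\rho(Y,\spincs) = \rho(Z,\spincs')$. Applying the inductive hypothesis to $(Z,\spincs')$ then completes the step.

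The main obstacle will be Stage~1: establishing injectivity of $f$ by the universal-coefficient spectral sequence argument and, crucially, ruling out the parity-flipped alternative $q_0(A) \equiv \ul d + 1 \pmod{2\Z}$. The rest of the proof is essentially bookkeeping once Proposition~\ref{prop: HFi-surgery} is in hand.
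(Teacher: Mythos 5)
Your proof is correct, but it is organized differently from the paper's. The paper's argument is entirely cobordism-based: it reuses the $2$-handle cobordisms from the proof of Theorem~\ref{thm: HFi-standard} to pass from $(Y,\spincs;M_A)$ down to a $3$-manifold $Y_1$ with standard \emph{untwisted} $\HFi$, invokes $\dtop(Y_1,\spincs_1)\equiv\dbot(Y_1,\spincs_1)+k$ from \cite{LevineRubermanStrleNonorientable}, and then descends further to a rational homology sphere $Y_2$ via \cite[Proposition 9.3]{OSzAbsolute}, tracking grading shifts and $\rho$ along the way. Your Stage~1 replaces the first leg of that journey by a purely algebraic argument on a fixed $Y$: the change-of-coefficients map $\HFi(Y,\spincs;\HomRing{Y})\to\HFi(Y,\spincs;M_A)$ is injective (by degeneration of the universal-coefficients spectral sequence, exactly as in the paper's Claims~\ref{claim: MA-action} and \ref{claim: g-infty} but for the pair $(\HomRing{Y},M_A)$ rather than $(M_A,\F)$) and $H_1$-equivariant, so it lands in $\KK=\Lambda^0(A)\otimes\F[U,U^{-1}]$ and pins down the grading coset; this is a genuinely different, cobordism-free reduction that trades the paper's reliance on the untwisted $\dtop/\dbot$ lemma for Theorem~\ref{thm: HFi-standard} plus a rank count. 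Your Stage~2 is close in spirit to the paper's descent to a rational homology sphere, but stays in totally twisted coefficients throughout via Proposition~\ref{prop: HFi-surgery}. Two small caveats, neither fatal: the core $K\subset Z$ of the dual surgery need only be \emph{rationally} nullhomologous, so you must invoke Remark~\ref{rmk: rationally-nulhom} rather than Proposition~\ref{prop: HFi-surgery} verbatim (the paper does the same in Theorem~\ref{thm: HFi-standard}); and your displayed equalities $\dsh(Y,\spincs;M_A)=q_0(A)+b_1(Y)/2$ and $\ul d(Y,\spincs)=\ul d(Z,\spincs')-\tfrac12$ are only justified as congruences mod $2\Z$, since the $d$-invariants see $\im(\pi)\subset\HFp$ and not just $\HFi$ --- but congruences are all you use.
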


The case where $Y$ is a rational homology sphere was proven by Ozsv\'ath and Szab\'o \cite[Theorem 1.2]{OSzAbsolute}.

\begin{proof}
Suppose $b_1(Y)=n$ and $\rank(A) = k$. In the proof of Theorem \ref{thm: HFi-standard}, we inductively produced a spin$^c$ cobordism $(W_1, \spinct_1) \co (Y_1,\spincs_1) \to (Y,\s)$ by successively attaching $n-k$ $2$-handles along $0$-framed knots. The untwisted homology $\HFi(Y_1,\spincs_1;\F)$ is standard, and the cobordism induces an isomorphism
\[
F^\infty_{W_1,\spinct_1} \co \HFi(Y_1,\spincs_1; \F) \to \HFi(Y, \spincs; M_A).
\]
Since $c_1(\spinct_1)$ is torsion by construction, the grading shift of $F^\infty_{W_1,\spinct_1}$ is equal to $\frac{k-n}{2}$. It follows that
\[
\dtw(Y,\spincs;M_A) \equiv \dtop(Y_1, \spincs_1) - \frac{n-k}{2} \pmod {2\Z}.
\]
By \cite[Lemma 3.5]{LevineRubermanStrleNonorientable},
\[
\dtop(Y_1,\spincs_1) \equiv \dbot(Y_1, \spincs_1) + k \pmod {2\Z}.
\]

Next, we find a cobordism $(W_2,\spinct_2) \co (Y_2,\spincs_2) \to (Y_1, \spincs_1)$, where $Y_2$ is a rational homology sphere, again obtained by successively attaching $k$ $2$-handles along $0$-framed knots. By \cite[Proposition 9.3]{OSzAbsolute}, the map
\[
F^\infty_{W_2,\spinct_2} \co \HFi(Y_2,\spincs_2) \to \HFi(Y_1, \spincs_1).
\]
is injective and takes $\HFi(Y_2,\spincs_2) \cong \Z[U,U^{-1}]$ to the bottom tower in $\HFi(Y_1,\spincs_1)$. Hence,
\[
\dbot(Y_1,\spincs_1) \equiv d(Y_2,\spincs_2) - \frac{k}{2} \pmod {2\Z}.
\]
Combining these congruences, we see that
\begin{align*}
\dtw(Y,\spincs;M_A) &\equiv d(Y_2,\spincs_2) +k - \frac{n}{2}  \pmod {2\Z} \\
\dsh(Y,\spincs;M_A) &\equiv d(Y_2,\spincs_2) \equiv \rho(Y_2,\spincs_2) \pmod{2\Z}.
\end{align*}
Finally, the spin$^c$ cobordisms $(W_1, \spinct_1)$ and $(W_2,\spinct_2)$, each of which has signature $0$, give us $\rho(Y,\spincs) = \rho(Y_1,\spincs_1) = \rho(Y_2,\spincs_2)$, which concludes the proof.
\end{proof}

\section{Cross-sections of open 4-manifolds} \label{sec: cross-section}
\subsection{Topological preliminaries}

Throughout this section, we will be working with open $4$-manifolds obtained as the infinite cyclic covers of homology $S^1 \times S^3$s. For greater generality, we begin by stating the salient algebraic-topology properties of such manifolds, and then work throughout with open manifolds satisfying those properties. (The terminology is motivated by Hughes and Ranicki \cite{hughes-ranicki:ends}, who have a stronger, homotopy-theoretic notion that they call a {\em ribbon}.)

\begin{definition} \label{def: ribbon}
A \emph{homology ribbon} is a smooth, connected, orientable, open $4$-manifold $\tilde X$ with two ends that satisfies the following properties:
\begin{enumerate}
\item \label{item: H3(Xtilde)}
$H_3(\tilde X) \cong \Z$.

\item \label{item: intform}
The intersection form on $H_2(\tilde X) \cong H^2_c(\tilde X)$ vanishes.

\item \label{item: H(Xtilde,end)}
For each end $\epsilon$ of $\tilde X$ and any field $\k$, we have $H_1(\tilde X, \epsilon; \k) \cong H_2(\tilde X, \epsilon; \k) \cong 0$.
\end{enumerate}
We call $\tilde X$ a homology $S^3 \times \R$ if (in addition to the above properties) $H_1(\tilde X) = H_2(\tilde X) = 0$, and a rational homology $S^3 \times \R$ if $H_1(\tilde X;\Q) = H_2(\tilde X;\Q) = 0$.
\end{definition}

\begin{proposition} \label{prop: cover}
Let $X$ be a smooth, closed, oriented $4$-manifold such that $H_*(X) \cong H_*(S^1 \times S^3)$, and let $p \co \tilde X \to X$ denote the universal abelian cover of $X$, with deck transformation group $\Z$. Then $\tilde X$ is a homology ribbon, and $p_*\co H_3(\tilde X) \to H_3(X) \cong \Z$ is an isomorphism.
\end{proposition}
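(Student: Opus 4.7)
The plan is to combine an explicit geometric model of $\tilde X$ with the Milnor--Wang long exact sequence for the infinite cyclic cover and Poincar\'e--Lefschetz duality. Since $H_1(X) \cong \Z$, the universal abelian cover $p \co \tilde X \to X$ is the infinite cyclic cover with deck group $\langle t \rangle \cong \Z$; in particular $\tilde X$ is connected. Realize a generator of $H^1(X; \Z) = [X, S^1]$ by a smooth map $f \co X \to S^1$ and set $Y = f^{-1}(\mathrm{pt})$ for a regular value, so $Y$ is a closed oriented 3-manifold dual to the generator of $H_3(X)$. Cutting $X$ along $Y$ gives a compact cobordism $W \co Y \to Y$, and $\tilde X = \bigcup_{n \in \Z} W_n$ is the infinite end-to-end concatenation of copies of $W$. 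This model makes transparent that $\tilde X$ has exactly two ends $\epsilon_\pm$ and that $H_4(\tilde X) = 0$ by non-compactness.

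The Milnor long exact sequence
\[
\cdots \to H_n(\tilde X) \xrightarrow{t_*-1} H_n(\tilde X) \xrightarrow{p_*} H_n(X) \xrightarrow{\partial} H_{n-1}(\tilde X) \to \cdots
\]
for the $\Z$-cover, combined with $H_*(X) = H_*(S^1 \times S^3)$, yields the following. The connecting map $\partial \co H_1(X) = \Z \to H_0(\tilde X) = \Z$ is the winding-number isomorphism, so $t_*-1$ is an isomorphism on $H_1(\tilde X)$. Any lift $\tilde Y \subset \tilde X$ of the cross-section satisfies $p_*[\tilde Y] = [Y]$, so $p_* \co H_3(\tilde X) \to H_3(X)$ is surjective; this forces $\partial \co H_3(X) \to H_2(\tilde X)$ to vanish and $t_*-1$ to be an isomorphism on $H_2(\tilde X)$ as well. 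What remains is the four-term exact sequence $0 \to \Z \to H_3(\tilde X) \xrightarrow{t_*-1} H_3(\tilde X) \xrightarrow{p_*} \Z \to 0$, in which the leftmost $\Z$ is generated by $[\tilde Y]$---adjacent lifts $\tilde Y_n, \tilde Y_{n+1}$ cobound $W_n \subset \tilde X$, so $t_*[\tilde Y] = [\tilde Y]$. To pin down $H_3(\tilde X) = \Z$ and deduce that $p_*$ is an isomorphism, I pass to $H^1_c(\tilde X; \Z) \cong H_3(\tilde X)$ via Poincar\'e--Lefschetz duality: the end-pair sequence $H^0(\tilde X) \to H^0(\epsilon_+) \oplus H^0(\epsilon_-) \to H^1_c(\tilde X) \to H^1(\tilde X) \to H^1(\epsilon_+) \oplus H^1(\epsilon_-)$, combined with the cohomological Wang computation that $t^*-1$ is an isomorphism on $H^1(\tilde X)$ and an analysis of how $t^*$ acts on the half-telescope structure at each end, identifies $H^1_c(\tilde X) \cong \Z$, generated by $\PD[\tilde Y]$.

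For property (2), introduce the equivariant intersection pairing
\[
\langle \alpha, \beta \rangle = \sum_{n \in \Z} (\alpha \cdot t^n_* \beta) \, t^n \in \Z[t, t^{-1}]
\]
(a finite sum for compact 2-cycle representatives). Deck invariance gives sesquilinearity $\langle t_* \alpha, \beta \rangle = t \langle \alpha, \beta \rangle$ and $\langle \alpha, t_* \beta \rangle = t^{-1} \langle \alpha, \beta \rangle$. Since $t_*-1$ is an isomorphism on $H_2(\tilde X)$, any $\beta$ equals $(t_*-1)^n \gamma_n$ for every $n \ge 0$, so $\langle \alpha, \beta \rangle$ is divisible by $(t^{-1}-1)^n$---equivalently by $(t-1)^n$ since $t^{-1} - 1 = -t^{-1}(t-1)$---in $\Z[t,t^{-1}]$ for every $n$. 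The only such element is $0$, hence $\langle \alpha, \beta \rangle = 0$ and in particular $\alpha \cdot \beta$ (the $t^0$ coefficient) vanishes. Property (3) then reduces, via Poincar\'e--Lefschetz duality applied to the closed-off half $\tilde X^-_N = \bigcup_{n < N} W_n$ with boundary $Y_N$, to showing $H_1(\tilde X^-_N, Y_N; \k) = H_2(\tilde X^-_N, Y_N; \k) = 0$ for any field $\k$, which follows from the isomorphism property of $t_*-1$ on $H_1(\tilde X; \k)$ and $H_2(\tilde X; \k)$ combined with a Mayer--Vietoris analysis of the half-infinite telescope.

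The main technical obstacle is pinning down $H_3(\tilde X) \cong \Z$: the Milnor--Wang sequence alone leaves room for a larger $H_3(\tilde X)$ with $t_*-1$ having matching kernel and cokernel, and ruling this out genuinely requires invoking Poincar\'e--Lefschetz duality together with the geometry of the two ends.
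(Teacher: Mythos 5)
Your argument follows the same backbone as the paper's (the Milnor exact sequence for the $\Z$--cover is the engine in both), but the step you yourself flag as ``the main technical obstacle'' --- pinning down $H_3(\tilde X)\cong \Z$ --- is precisely the step you do not carry out. From your end-pair sequence, $H^1_c(\tilde X)\cong\Z$ is equivalent to injectivity of the restriction $H^1(\tilde X)\to H^1(\epsilon_+)\oplus H^1(\epsilon_-)$, i.e.\ to the assertion that no nonzero class in $H^1(\tilde X)$ dies on a neighborhood of each end, and ``an analysis of how $t^*$ acts on the half-telescope structure'' is a placeholder, not a proof. Note that $H^1(\tilde X)$ need not vanish --- e.g.\ $H_1(\tilde X)$ can be $\Z[t,t^{-1}]/(t^2-t-1)\cong\Z^2$, on which $1-t$ acts invertibly --- so something genuinely has to be said. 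The claim is true and is fillable along your lines: over $\Q$, $H_1(\tilde X;\Q)$ is a finitely generated torsion $\Q[t,t^{-1}]$--module (torsion because $1-t$ cannot be surjective on a free summand), hence finite-dimensional, hence equal to the image of $H_1$ of a single compact union of consecutive blocks $W_n$; applying the automorphism $t_*^k$ shows it is also the image of the $k$-fold translate for every $k$, so a class in $H^1(\tilde X;\Q)$ vanishing near an end vanishes on a generating set, and torsion-freeness of $H^1(\tilde X;\Z)$ upgrades this to $\Z$ coefficients. The paper sidesteps all of this by citing Milnor's duality theorem for infinite cyclic covers, $H_3(\tilde X)\cong H^0(\tilde X)\cong\Z$. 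Your treatment of property (3) has a similar looseness: what must vanish is the direct limit over neighborhoods of the end of the relative groups, not $H_j(\text{half},\text{boundary};\k)$ for an individual half (which need not vanish), and the paper's chain-level identification $C_*(\tilde X,\epsilon;\k)\cong C_*(\tilde X;\k)\otimes_{\k[t,t^{-1}]}\k[[t,t^{-1}]$ together with flatness and the observation that a finitely generated $\k[t,t^{-1}]$--module on which $1-t$ is invertible is killed by this base change is the clean substitute for your ``Mayer--Vietoris analysis.''

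By contrast, your proof of property (2) via the equivariant pairing $\gen{\alpha,\beta}=\sum_n(\alpha\cdot t^n_*\beta)\,t^n$ is correct and genuinely different from the paper's: the sesquilinearity plus the fact that $\beta\in\im(t_*-1)^n$ for all $n$ forces $\gen{\alpha,\beta}$ to be divisible by $(t-1)^n$ for all $n$ in $\Z[t,t^{-1}]$, hence zero. The paper instead passes to the finite cyclic covers $X_{p^k}$, uses the standard $\Z/p$--homology computation to get $H_2(X_{p^k};\Q)=0$, and pushes a pair of surfaces down to a cover on which the projection embeds them. Your argument is self-contained and arguably slicker; just make sure to justify that the pairing is well defined on homology in the open manifold and that the sum is finite (compact representatives become disjoint from large translates), both of which are routine.
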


\begin{proof}
For property \ref{item: H3(Xtilde)}, Milnor \cite[Remark 1]{MilnorCyclic} shows that $H_3(\tilde X) \cong H^0(\tilde X) \cong \Z$.

Let $\tau \co \tilde X \to \tilde X$ denote a generator of the deck transformation group. Note that $H_*(\tilde X)$ is a $\Z[t,t^{-1}]$--module, where $t$ acts by $\tau_*$. The Milnor exact sequence
\begin{equation} \label{eq: smith-seq}
\dots \to H_i(\tilde X) \xrightarrow{1-t} H_i(\tilde X) \xrightarrow{p_*} H_i(X) \to H_{i-1}(\tilde X) \to \dots
\end{equation}
implies that $1-t$ is an isomorphism on $H_1(\tilde X)$ and $H_2(\tilde X)$ and zero on $H_3(\tilde X)$. It follows that $p_*\co H_3(\tilde X) \to H_3(X)$ is an isomorphism.

For each integer $m \ge 1$, let $\tilde X \xrightarrow{q_m} X_m \xrightarrow{p_m} X$ denote the intermediate $m$-fold cover of $X$ with deck group $\Z/m$. A standard argument shows that when $m$ is a prime power $p^k$, $H_*(X_m; \Z_p) \cong H_*(S^1 \times S^3;\Z_p)$, and therefore $H_2(X_m;\Q) = 0$ since $H_*(X_m;\Z)$ is finitely generated. In particular, the intersection form on $H_2(X_m;\Z)$ is trivial. Now, given any classes $a,b \in H_2(\tilde X)$, let $\Sigma_a, \Sigma_b$ be closed, oriented, embedded surface representatives that intersect transversally. For $m$ sufficiently large, the restriction of $q_m$ to $\Sigma_a \cup \Sigma_b$ is a diffeomorphism onto its image, so $a \cdot b = q_{m*}(a) \cdot q_{m*}(b) = 0$. This proves property \ref{item: intform}.

For property 3, it is easiest to work with simplicial homology. Choose a finite triangulation of $X$, and lift it to a locally finite triangulation of $\tilde X$. After possibly replacing $\tau$ by $\tau^{-1}$, we may assume that $\tau$ shifts in the direction of the end $\epsilon$. Then
\[
C_*(\tilde X,\epsilon;\k) \cong C_*(\tilde X;\k) \otimes_{\k[t,t^{-1}]} \k[[t,t^{-1}].
\]
Since $\k[[t,t^{-1}]$ is flat as a $\k[t,t^{-1}]$--module, we have
\[
H_*(\tilde X,\epsilon;\k) \cong H_*(\tilde X;\k) \otimes_{\k[t,t^{-1}]} \k[[t,t^{-1}].
\]
Note that $H_*(\tilde X; \k)$ is finitely generated as a $\k[t,t^{-1}]$--module. Since $1-t$ acts as an isomorphism on $H_j(\tilde X; \k)$ for $j=1,2$, we have
\[
H_j(\tilde X; \k) \cong \bigoplus_{l=1}^n \k[t,t^{-1}]/(p_l),
\]
where each $p_l$ is a nonzero, monic polynomial. Since $p_l$ is invertible in $\k[[t,t^{-1}]$, we deduce that
\[
H_j(\tilde X; \k) \otimes_{\k[t,t^{-1}]} \k[[t,t^{-1}] = 0,
\]
as required.
\end{proof}

For the rest of this section, unless otherwise specified, $\tilde X$ will denote an arbitrary homology ribbon, without the requirement that it is the cover of a homology $S^1 \times S^3$.

A \emph{cross-section} of $\tilde X$ is a connected, smoothly embedded, oriented $3$--manifold $Y$ representing a generator of $H_3(\tilde X)$. To find such a cross-section, one can proceed as in Example 3 of the introduction to~\cite{hughes-ranicki:ends}, which treats the case of a manifold with a single end.  Using a proper exhaustion of $\tilde X$, one finds a smooth proper map $f: \tilde X \to \R$ with the ends going to $\pm \infty$.  (The choice of a generator of $H_3(\tilde X) \cong H^1c(\tilde X)$ determines which end goes to $+\infty$.) Then there is a component of the preimage of a regular value that is a cross-section. Denote the closures of the components of $\tilde X \minus Y$ by $L_Y$ and $R_Y$ so that $Y = \partial L_Y = - \partial R_Y$ as an oriented manifold. Note that reversing the orientation of $Y$ (and hence the class in $H_3(\tilde X)$ that $Y$ represents) interchanges the roles of $L_Y$ and $R_Y$: $L_{-Y} = R_Y$ and $R_{-Y} = L_Y$.

If disjoint cross-sections $Y_1$ and $Y_2$ represent the same homology class, we say that $Y_2$ is to the right of $Y_1$, denoted $Y_1 \prec Y_2$, if $Y_2 \subset R_{Y_1}$. This notion depends on which homology class $Y_1$ and $Y_2$ represent; if $Y_1 \prec Y_2$, then $-Y_2 \prec -Y_1$. In what follows, whenever we write $Y_1 \prec Y_2$, we implicitly assume that $Y_1$ and $Y_2$ represent the same homology class.
If $Y_1 \prec Y_2$, let $W(Y_1,Y_2)$ be the closure of $\tilde X \minus (L_{Y_1} \cup R_{Y_2})$; this is an oriented cobordism from $Y_1$ to $Y_2$.

Fix a torsion spin$^c$ structure $\spincs$ on $\tilde X$. By abuse of notation, the restriction of $\spincs$ to any cross-section $Y$ or any cobordism $W(Y_1,Y_2)$ will also be denoted by $\spincs$. If $\tilde X$ is in fact the $\Z$--cover of a homology $S^1 \times S^3$ $X$, then let $\spincs_X$ denote the pullback of the unique spin$^c$ structure on $X$.

We begin with a few basic facts concerning the algebraic topology of cross-sections. First, note that the Mayer--Vietoris sequence for $\tilde X = L_Y \cup R_Y$ shows that $H_3(L_Y) \cong H_3(R_Y) \cong \Z$. Next, consider the long exact sequence on cohomology (both ordinary and compactly supported) for the pair $(L_Y,Y)$:
\begin{equation} \label{eq: H(LY,Y)}
\xymatrix@C=0.25in{
H^0_c(L_Y) \ar[r] \ar[d] & H^0(Y) \ar[r] \ar[d]^{=} & H_c^1(L_Y,Y) \ar[r] \ar[d] & H_c^1(L_Y) \ar[r]^-{j_Y^c} \ar[d]^{\kappa_L} & H^1(Y) \ar[r]^-{\delta_Y^c} \ar[d]^{=} & H^2_c(L_Y,Y) \ar[d] \\
H^0(L_Y) \ar[r]^-{\cong} & H^0(Y) \ar[r]^-{0} & H^1(L_Y,Y) \ar[r] & H^1(L_Y) \ar[r]^-{j_Y} & H^1(Y) \ar[r]^-{\delta_Y} & H^2(L_Y,Y)  }
\end{equation}
Note that $H^0_c(L_Y)=0$ since $L_Y$ is non-compact. By Poincar\'e duality, $H^1_c(L_Y,Y) \cong H_3(L_Y) \cong \Z$. By looking at the same diagram with coefficients in $\Z/p$ for each prime $p$, we deduce that the coboundary $H^0(Y) \to H_c^1(L_Y,Y)$ is an isomorphism, the map $j_Y^c$ is injective, and the quotient $H^1(Y)/H^1_c(L_Y) \cong \im(\delta_Y^c)$ is torsion-free. (That is, $H^1_c(L_Y)$ is a direct summand of $H^1(Y)$.) In particular, $H^1_c(L_Y)$ is a finitely generated, free abelian group; let $b_1^c(L_Y)$ denote its rank. Moreover, the map $\kappa_L \co H^1_c(L_Y) \to H^1(L_Y)$ is injective. (Analogous statements hold with $R_Y$ in place of $L_Y$.)

Next, consider the Mayer--Vietoris sequences (in both ordinary and compactly supported cohomology) for the decomposition $\tilde X = L_Y \cup_Y R_Y$, and the natural maps between them.
\begin{equation} \label{eq: MV}
\xymatrix@C=0.2in{
H^0(Y) \ar[r] \ar[d]^{=}  & \Hc^1(\tilde X) \ar[r]\ar[d]^{\kappa_X} & \Hc^1(L_Y) \oplus \Hc^1(R_Y) \ar[r] \ar@{^{(}->}[d]^{\kappa_L\oplus \kappa_R} & H^1(Y) \ar[d]^{=} \ar[r] & H^2_c(\tilde X) \ar[d]  \\
H^0(Y) \ar[r] & H^1(\tilde X) \ar[r] & H^1(L_Y) \oplus H^1(R_Y) \ar[r] & H^1(Y) \ar[r] & H^2(\tilde X)
}
\end{equation}
(The construction of the upper sequence is most readily carried out if one uses the simplicial version of cohomology with compact supports, as described in~\cite[\S 3.3]{hatcher}; we remark that exactness uses the fact that $Y$ is compact.) Just as before, we deduce that the map $H^1_c(L_Y) \oplus H^1_c(R_Y) \to H^1(Y)$ is injective, meaning that the images of $H^1_c(L_Y)$ and $H^1_c(R_Y)$ in $H^1(Y)$ intersect trivially. However, this map need not be surjective, as in the following example:

\begin{example} \label{ex: YxR}
Suppose $X$ is a homology $S^1 \times S^3$ obtained as the mapping torus of a self-diffeomorphism of some $3$-manifold $Y$. Then $\tilde X \cong Y \times \R$. If we consider $Y = Y \times \{0\}$ as a cross-section of $\tilde X$, it is easy to check that $H^*_c(L_Y) \cong H^*_c(R_Y)  \cong 0$. In particular, the coboundary $H^1(Y) \to H^2(\tilde X)$ in \eqref{eq: MV} is an isomorphism.
\end{example}

In the case where $\tilde X$ is a rational homology $S^3 \times \R$, the situation simplifies considerably, so that we can use ordinary rather than compactly supported cohomology throughout.

\begin{lemma} \label{lemma: HS3xR}
If $\tilde X$ is a homology $S^3 \times \R$, then:
\begin{enumerate}
\item The maps $\kappa_L \co H_c^1(L_Y) \to H^1(L_Y)$ and $\kappa_R \co H_c^1(R_Y) \to H^1(R_Y)$ are isomorphisms, so $j_Y$ and $j_Y^c$ are identified.

\item We have $H^1(Y) \cong H^1(L_Y) \oplus H^1(R_Y) \cong H^1_c(L_Y) \oplus H^1_c(R_Y)$.

\item The sequence
\[
\xymatrix@1{
0 \ar[r] & H^1(L_Y) \ar[r]^-{j_Y} & H^1(Y) \ar[r]^-{\delta_Y} & H^2(L_Y,Y) \ar[r] & 0
}
\]
is short exact and splits, and $H^2(L_Y,Y) \cong H^1(R_Y)$.
\end{enumerate}
\end{lemma}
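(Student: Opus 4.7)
The strategy is to feed the hypothesis $H_1(\tilde X) = H_2(\tilde X) = 0$ into both rows of \eqref{eq: MV} and to invoke a single excision for part (3). By the universal coefficient theorem, $H^1(\tilde X) = H^2(\tilde X) = 0$, and by Poincar\'e duality $H^1_c(\tilde X) \cong H_3(\tilde X) \cong \Z$ and $H^2_c(\tilde X) \cong H_2(\tilde X) = 0$.

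For (2), the bottom row of \eqref{eq: MV} collapses immediately to give $H^1(L_Y) \oplus H^1(R_Y) \cong H^1(Y)$. In the top row, $H^0_c(L_Y) = H^0_c(R_Y) = 0$ by non-compactness, so the connecting map $H^0(Y) \to H^1_c(\tilde X)$ is an injection between copies of $\Z$; its cokernel is a finite cyclic group that also injects into the free abelian group $H^1_c(L_Y) \oplus H^1_c(R_Y) \subseteq H^1(Y)$ (using $H^2_c(\tilde X) = 0$), hence vanishes. This yields $H^1_c(L_Y) \oplus H^1_c(R_Y) \cong H^1(Y)$. Part (1) then follows immediately: each of $\kappa_L, \kappa_R$ is already known to be injective from the discussion following \eqref{eq: H(LY,Y)}, and commutativity of \eqref{eq: MV} together with the two horizontal isomorphisms supplied by (2) forces $\kappa_L \oplus \kappa_R$ to be an isomorphism; a short diagram chase using injectivity of each summand then promotes each $\kappa$ to an isomorphism individually.

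For (3), the long exact sequence of $(L_Y, Y)$ together with the injectivity $j_Y \co H^1(L_Y) \hookrightarrow H^1(Y)$ from (2) gives $\ker(\delta_Y) = H^1(L_Y)$. To compute $H^2(L_Y, Y)$, excise $\operatorname{int}(R_Y)$ to obtain $H^k(L_Y, Y) \cong H^k(\tilde X, R_Y)$, and then the long exact sequence of $(\tilde X, R_Y)$,
\[
0 = H^1(\tilde X) \to H^1(R_Y) \to H^2(\tilde X, R_Y) \to H^2(\tilde X) = 0,
\]
identifies $H^2(L_Y, Y) \cong H^1(R_Y)$. Naturality of the long exact sequences under the inclusion of pairs $(L_Y, Y) \hookrightarrow (\tilde X, R_Y)$ identifies the composition $H^1(R_Y) \to H^1(Y) \xrightarrow{\delta_Y} H^2(L_Y, Y)$ (restriction followed by the coboundary) with this excision isomorphism. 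Since $H^1(R_Y)$ appears as a direct summand of $H^1(Y)$ by (2), this simultaneously exhibits $\delta_Y$ as surjective and furnishes an explicit splitting, completing the short exact sequence and its splitting.

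The main obstacle is the naturality step in (3): identifying $\delta_Y$ restricted to the $H^1(R_Y)$ summand with the excision isomorphism requires chasing the morphism of long exact sequences induced by the inclusion of pairs $(L_Y, Y) \hookrightarrow (\tilde X, R_Y)$. Everything else reduces to routine Mayer--Vietoris manipulation using the vanishing cohomology groups of $\tilde X$ computed at the outset.
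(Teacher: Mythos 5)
Your proof is correct and follows essentially the same route as the paper's: both collapse the two Mayer--Vietoris sequences of \eqref{eq: MV} using the vanishing of $H^1(\tilde X)$, $H^2(\tilde X)$, and $H^2_c(\tilde X)$ to get (1) and (2), and both obtain (3) from excision together with the long exact sequence of $(\tilde X, R_Y)$, with the restriction $H^1(R_Y) \to H^1(Y)$ furnishing the splitting. The only differences are expository: you re-derive the injectivity of $H^1_c(L_Y)\oplus H^1_c(R_Y)\to H^1(Y)$ (which the paper already established for general homology ribbons) and spell out the naturality chase for the splitting, which the paper leaves implicit.
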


\begin{proof}
If $\tilde X$ is a rational homology $S^3 \times \R$, then $H^1(\tilde X) = 0$, and $H^2(\tilde X)$ and $H^2_c(\tilde X)$ are both torsion groups. It follows that the maps $H^1(L_Y) \oplus H^1(R_Y) \to H^1(Y)$ and $H^1_c(L_Y) \oplus H^1_c(R_Y) \to H^1(Y)$ in \eqref{eq: MV} are both isomorphisms, so $\kappa_L\oplus \kappa_R$ is as well. Moreover, by the exact sequence for $(\tilde X,R_Y)$ and excision, $H^1(R_Y) \cong H^2(\tilde X,R_Y) \cong H^2(L_Y,Y)$. The restriction map $H^1(R_Y) \to H^1(Y)$ provides a splitting for the short exact sequence.
\end{proof}

Returning to the general case, it is useful to consider one more version of the Mayer--Vietoris sequence, which again is most easily proved using simplicial cohomology as in \cite[\S 25]{munkres:at}. If $\epsilon$ denotes the left end of $X$ corresponding to $L_Y$, we have an exact sequence
\begin{equation} \label{eq: MV-end}
H^1(\tilde X,\epsilon) \to H^1_c(L_Y) \oplus H^1(R_Y) \to H^1(Y) \to H^2(\tilde X,\epsilon).
\end{equation}
In particular, if we take coefficients in $\Q$ and apply property \ref{item: H(Xtilde,end)} from Definition \ref{def: ribbon} together with universal coefficients, we see that
\begin{equation} \label{eq: H1(Y)-end}
H^1(Y;\Q) \cong H^1_c(L_Y;\Q) \oplus H^1(R_Y;\Q).
\end{equation}

Finally, we recall the locally finite homology groups of a (non-compact) polyhedral space $Z$, $\Hlf_*(Z)$. These can be defined in greatest generality using an inverse limit; see Laitinen \cite[Section 2]{laitinen:ends}. When $Z$ is has a locally finite triangulation, it is easiest to use the simplicial version: the chain group $\Clf_i(Z)$ consists of possibly infinite sums of $i$-simplices, and the differential is defined in the usual way. The universal coefficient theorem \cite[Proposition 2.8]{laitinen:ends} relating locally finite homology and compactly supported cohomology takes a slightly unusual form: for any principal ideal domain $R$, there is an exact sequence
\begin{equation} \label{eq: Hlf-univ-coeff}
0 \to \Ext(H_c^{n+1}(Z),R) \to \Hlf_n(Z;R) \to \Hom(H_c^n(Z),R) \to 0.
\end{equation}
Additionally, if $Z$ is an $n$-dimensional manifold, then there is a Poincar\'e duality isomorphism $\Hlf_k(Z) \cong H^{n-k}(Z)$ \cite[Theorem 3.1]{laitinen:ends}.

\subsection{Correction terms}

We will be considering Heegaard Floer homology with coefficients in the module
\begin{equation} \label{eq: LLY}
\LL_Y := M_{H^1_c(L_Y)} = \F[H^1(Y)/H^1_c(L_Y)] = \F[\im(\delta_Y^c)].
\end{equation}
The key observation is the following:

\begin{proposition} \label{prop: HFi-section}
Let $\tilde X$ be a homology ribbon and let $\spincs$ be a torsion spin$^c$ structure on $\tilde X$. For any cross-section $Y$, the restriction of the triple cup product form on $H^1(Y)$ to the image of $H^1_c(L_Y)$ vanishes identically. Therefore,
\[
\HF^\infty(Y,\spincs; \LL_Y) \cong \Lambda^*(H^1_c(L_Y)) \otimes \F[U,U^{-1}]
\]
as a $\Lambda^*(H_1(Y)/(H^1_c(L_Y)^\perp)) \otimes \F[U,U^{-1}]$--module. Moreover, we may naturally identify $H_1(Y)/(H^1_c(L_Y))^\perp$ with $\Hlf_1(L_Y)/\Tors$. Analogous statements hold with $R_Y$ in place of $L_Y$.
\end{proposition}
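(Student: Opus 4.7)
My plan is to verify the hypotheses of Theorem \ref{thm: HFi-standard} for the summand $A := H^1_c(L_Y) \subset H^1(Y)$, read off the conclusion, and identify the action module. The $R_Y$ version will then follow by replacing $Y$ with $-Y$, which swaps the roles of $L_Y$ and $R_Y$. That $A$ is a direct summand of $H^1(Y)$ has already been established in the discussion following \eqref{eq: H(LY,Y)}.

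For the vanishing of the triple cup product on $A$, I would use naturality of cup product under the restriction $j_Y^c \co H^*_c(L_Y) \to H^*(Y)$: for $\alpha_1, \alpha_2, \alpha_3 \in H^1_c(L_Y)$,
\[
j_Y^c(\alpha_1) \cup j_Y^c(\alpha_2) \cup j_Y^c(\alpha_3) = j_Y^c(\alpha_1 \cup \alpha_2 \cup \alpha_3),
\]
reducing matters to showing that $j_Y^c \co H^3_c(L_Y) \to H^3(Y)$ is zero. Under Poincar\'e--Lefschetz duality, the relevant segment of the long exact sequence of the pair $(L_Y, Y)$ in compactly supported cohomology corresponds to the ordinary homology sequence
\[
H_1(L_Y) \to H_1(L_Y, Y) \xrightarrow{\partial} H_0(Y) \to H_0(L_Y),
\]
with $j_Y^c$ identified with $\partial$. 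Since $L_Y$ and $Y$ are connected, the map $H_0(Y) \to H_0(L_Y)$ is an isomorphism $\Z \to \Z$, forcing $\partial = 0$ by exactness. Theorem \ref{thm: HFi-standard} then yields the desired description of $\HFi(Y,\spincs;\LL_Y)$.

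For the identification of $H_1(Y)/A^\perp$ with $\Hlf_1(L_Y)/\Tors$, I would combine Poincar\'e duality on $Y$, which gives $H_1(Y)/\Tors \cong \Hom(H^1(Y), \Z)$, with the universal coefficient sequence \eqref{eq: Hlf-univ-coeff} and Poincar\'e--Lefschetz duality on $L_Y$, which together give $\Hlf_1(L_Y)/\Tors \cong \Hom(H^1_c(L_Y), \Z)$. The restriction $\Hom(H^1(Y),\Z) \to \Hom(H^1_c(L_Y), \Z)$ is surjective because $A$ is a direct summand, and its kernel modulo torsion is by definition $A^\perp$; the resulting isomorphism is induced geometrically by the inclusion $Y \hookrightarrow L_Y$ sending a cycle to its locally finite class. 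The main delicacy is keeping straight the various Poincar\'e--Lefschetz duality conventions on the non-compact-with-boundary manifold $L_Y$, but each required duality reduces to a standard statement, and the key substantive input is the connectedness argument in the previous paragraph.
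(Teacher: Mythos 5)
Your proposal is correct and follows essentially the same route as the paper: both arguments reduce the vanishing of the triple cup product to the fact that the top-degree restriction $H^3_c(L_Y) \to H^3(Y)$ is zero because $Y$ bounds $L_Y$ (the paper pairs $\alpha_1\cup\alpha_2\cup\alpha_3$ against $i_*[Y] = i_*\partial[L_Y,Y] = 0$ in $\Hlf_3(L_Y)$, while you read off the Lefschetz-dual statement from $H_1(L_Y,Y) \to H_0(Y) \to H_0(L_Y)$ and connectedness), and both then invoke Theorem \ref{thm: HFi-standard} together with the universal coefficient sequence \eqref{eq: Hlf-univ-coeff} for the identification $H_1(Y)/(H^1_c(L_Y))^\perp \cong \Hlf_1(L_Y)/\Tors$. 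The only point worth making explicit is that the restriction $j_Y^c$ to the closed subspace $Y$ is multiplicative for cup products on compactly supported cohomology, which your naturality step uses and which the paper uses implicitly in the same way.
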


\begin{proof}
First, note that there is a fundamental class $[L_Y,Y] \in \Hlf_4(L_Y,Y)$ which maps to the fundamental class $[Y] \in H_3(Y)$ under the boundary map. (If we are given a locally finite triangulation of $L_Y$ with $Y$ as a subcomplex, the fundamental class is given as the sum of all the $4$-simplices, with signs determined by the orientation, and the boundary map $\Hlf_4(L_Y,Y) H_3(Y)$ is just the usual simplicial boundary.) We then argue just as we would if $L_Y$ were a compact manifold: For any  $\alpha_1, \alpha_2, \alpha_3 \in H^1_c(L_Y)$, we have:
\begin{align*}
\gen{i^*(\alpha_1) \cup i^*(\alpha_2) \cup i^*(\alpha_3), [Y]}
&= \gen{i^*(\alpha_1 \cup \alpha_2 \cup \alpha_3), [Y]} \\
&= \gen{\alpha_1 \cup \alpha_2 \cup \alpha_3, i_*([Y])} \\
&= \gen{\alpha_1 \cup \alpha_2 \cup \alpha_3, i_*(\partial([L_Y,Y]))} \\
&= 0.
\end{align*}
The second and third lines make use of the pairing between $H^3_c(L_Y)$ and $\Hlf_3(L_Y)$ given in \eqref{eq: Hlf-univ-coeff} (taking $R=\Z$). The same exact sequence also yields the identification
\[
H_1(Y)/(H^1_c(L_Y))^\perp \cong \Hlf_1(L_Y)/\Tors.
\]

Finally, the statement about $\HF^\infty$ follows directly from Theorem \ref{thm: HFi-standard}.
\end{proof}

Proposition \ref{prop: HFi-section} allows us to consider the twisted correction term $\dtw(Y,\spincs; \LL_Y)$ and the shifted version $\dsh(Y,\spincs; \LL_Y)$. By definition, we have
\begin{equation} \label{eq: dsh(Y,LLY)}
\dsh(Y,\spincs; \LL_Y) = \dtw(Y,\spincs; \LL_Y) + \frac{ b_1(Y) - 2b_1^c(L_Y)}{2}.
\end{equation}
Observe that when $\tilde X$ is a homology $S^3 \times \R$, Lemma \ref{lemma: HS3xR} implies that $b_1^c(L_Y) = b_1(L_Y)$ and $\chi(L_Y) = b_1(Y) - 2b_1(L_Y)$, and hence
\begin{equation} \label{eq: dsh(Y,LLY)-HS3xR}
\dsh(Y,\spincs; \LL_Y) = \dtw(Y,\spincs; \LL_Y) + \frac{ \chi(L_Y)}{2}.
\end{equation}

Our choice of coefficients in $\LL_Y$ (as opposed to the analogous construction using the cohomology of $R_Y$) is justified by the following lemma:

\begin{lemma} \label{lemma: compatible}
Let $\tilde X$ be a homology ribbon. Suppose $Y_1, Y_2$ are disjoint, homologous cross-sections of $\tilde X$ with $Y_1 \prec Y_2$, and let $W = W(Y_1,Y_2)$ be the cobordism between them. Then $\LL_{Y_1}(W) \cong \LL_{Y_2}$.
\end{lemma}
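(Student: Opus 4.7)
The plan is to reduce the isomorphism $\LL_{Y_1}(W) \cong \LL_{Y_2}$ to an algebraic statement and then verify it topologically. First I would unwind the tensor product using the cokernel description $K(W) \cong H^1(\partial W)/\rho_W(H^1(W))$ coming from the exact sequence of $(W,\partial W)$, where $\rho_W = (\rho_1,\rho_2) : H^1(W) \to H^1(Y_1) \oplus H^1(Y_2)$. This realizes
\[
\LL_{Y_1}(W) \;\cong\; \F[H^1(Y_1) \oplus H^1(Y_2)] \,\big/\, \bigl( t_{(\alpha,0)}-1,\ t_{(\rho_1(\beta),\rho_2(\beta))}-1 \bigr),
\]
with $\alpha$ ranging over $H^1_c(L_{Y_1})$ and $\beta$ over $H^1(W)$. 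Comparison with $\LL_{Y_2} = \F[H^1(Y_2)/H^1_c(L_{Y_2})]$ then reduces to two assertions: (I) $H^1(Y_1) = H^1_c(L_{Y_1}) + \rho_1(H^1(W))$, so that every generator in the $H^1(Y_1)$-factor can be absorbed into $H^1(Y_2)$ via the relations; and (II) the resulting set $\{\rho_2(\beta) : \beta \in H^1(W),\ \rho_1(\beta) \in H^1_c(L_{Y_1})\}$ equals $H^1_c(L_{Y_2})$ inside $H^1(Y_2)$.

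Next I would translate both statements into homology. Under Poincar\'e--Lefschetz duality, $H^1(Y_i) \cong H_2(Y_i)$, and the injection $H^1_c(L_{Y_i}) \hookrightarrow H^1(Y_i)$ corresponds to the connecting map $H_3(L_{Y_i},Y_i) \to H_2(Y_i)$, whose image is $\ker(H_2(Y_i) \to H_2(L_{Y_i}))$. Similarly $\rho_i : H^1(W) \to H^1(Y_i)$ is identified with the $Y_i$-component $\partial_i$ of the boundary map $H_3(W,\partial W) \to H_2(Y_1) \oplus H_2(Y_2)$. With these identifications, claim (II) follows from a diagram chase combining the long exact sequence of $(W,\partial W)$ with the Mayer-Vietoris sequence
\[
\cdots \to H_2(Y_1) \xrightarrow{(\ell_{1*},\,-i_{1*})} H_2(L_{Y_1}) \oplus H_2(W) \xrightarrow{p_{1*}+q_*} H_2(L_{Y_2}) \to H_1(Y_1) \to \cdots
\]
for $L_{Y_2} = L_{Y_1} \cup_{Y_1} W$: the key observation is that exactness forces $\ker(H_2(W) \to H_2(L_{Y_2})) = i_{1*}(\ker \ell_{1*})$, which matches the two subgroups appearing in (II).

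The main obstacle is claim (I), equivalent to the inclusion $\operatorname{im}(H_2(Y_1) \to H_2(L_{Y_2})) \subseteq \operatorname{im}(H_2(Y_2) \to H_2(L_{Y_2}))$. A priori this is not automatic, since a 2-cycle in $Y_1$ need not be homologous in the compact cobordism $W$ to a 2-cycle in $Y_2$. I expect to establish it by invoking the homology ribbon hypothesis---in particular property~(3) of Definition \ref{def: ribbon} applied to the left end $\epsilon$---through the Mayer--Vietoris-at-the-end sequence \eqref{eq: MV-end} applied to both $Y_1$ and $Y_2$, together with the naturality of these sequences under the cobordism $W$. The vanishing of $H_1(\tilde X,\epsilon)$ and $H_2(\tilde X,\epsilon)$ with field coefficients should force the images of $H_2(Y_1)$ and $H_2(Y_2)$ in $H_2(L_{Y_2})$ to coincide, since any discrepancy would have to ``escape to infinity'' along the left end. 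Compatibility of the resulting isomorphism with the $\HomRing{Y_2}$-action is then immediate from the construction.
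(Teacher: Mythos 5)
Your reduction of the lemma to claims (I) and (II) is correct: writing $K(W)\cong H^1(\partial W)/\rho_W(H^1(W))$ and unwinding the tensor product does present $\LL_{Y_1}(W)$ as the group ring of $(H^1(Y_1)\oplus H^1(Y_2))/\langle H^1_c(L_{Y_1})\oplus 0,\ \im\rho_W\rangle$, and the natural $\HomRing{Y_2}$--linear map from $\LL_{Y_2}$ is an isomorphism exactly when (I) and (II) hold. Your argument for (II) also checks out: the Mayer--Vietoris identity $\ker\bigl(H_2(W)\to H_2(L_{Y_2})\bigr)=i_{1*}(\ker\ell_{1*})$, combined with the exact sequence of the pair $(W,\partial W)$, gives both inclusions over $\Z$. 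This is a different packaging from the paper's proof, which runs the long exact sequence of the triple $(L_{Y_2},Y_1\cup Y_2,Y_2)$ in compactly supported cohomology together with the excision $H^2_c(L_{Y_2},Y_1\cup Y_2)\cong H^2_c(L_{Y_1},Y_1)\oplus H^2(W,\partial W)$ so as to exhibit $\im(\delta^c_{Y_2})$ directly as the cokernel of a map $H^1(Y_1)\to \im(\delta^c_{Y_1})\oplus K(W)$; but the two computations are morally the same, and in particular they hinge on the same point.

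That point is your claim (I), which you correctly flag as the crux but do not prove; it is exactly the content of the step ``the image of $\gamma$ is identified with $\im(\delta^c_{Y_1})\oplus H^2(W,\partial W)$'' in the paper. The mechanism you invoke does yield (I) with field coefficients, and more easily than you suggest: for any field $\k$, \eqref{eq: H1(Y)-end} gives $H^1(Y_1;\k)=H^1_c(L_{Y_1};\k)\oplus H^1(R_{Y_1};\k)$, and the restriction $H^1(R_{Y_1};\k)\to H^1(Y_1;\k)$ factors through $\rho_1$ because $Y_1\subset W\subset R_{Y_1}$, so no naturality chase is needed. The problem is that the lemma concerns group rings of the \emph{integral} cohomology groups, so (I) is needed over $\Z$, and ``surjective for every field'' does not formally upgrade to ``surjective over $\Z$'': $H^1(-;\Z/p)$ differs from $H^1(-;\Z)\otimes\Z/p$ by $\Tor$ terms, so a mod~$p$ class hit by $H^1(W;\Z/p)$ need not be hit by $H^1(W;\Z)\otimes\Z/p$. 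What the field statements actually give is that the cokernel of $H^1_c(L_{Y_1})\oplus H^1(W)\to H^1(Y_1)$ is a finite group (it is a quotient of the free group $\im(\delta^c_{Y_1})$ that vanishes rationally), and you still owe an argument that this finite group is zero rather than merely torsion. Until that is supplied, the proof is incomplete at its central step.
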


\begin{proof}
According to \eqref{eq: LLY}, we are trying to show that
\[
\F[\im(\delta_{Y_1}^c)] \otimes_{\F[H^1(Y_1)]} \F[K(W)] \cong \F[\im(\delta_{Y_2}^c)].
\]
We prove this by constructing an exact sequence
\[
H^1(Y_1) \to \im(\delta_{Y_1}^c) \oplus K(W) \to \im(\delta_{Y_2}^c) \to 0.
\]
as follows.

First, by property \ref{item: intform} in Definition \ref{def: ribbon}, the intersection form on $H_2(W)$ vanishes identically. Therefore, the map $j_W\co H^2(W,\partial W) \to H^2(W)$ vanishes identically, meaning that $K(W)  = H^2(W, \partial W)$.

Consider the following commutative diagram, whose rows and columns are exact:
\[
\xymatrix{
H^1(Y_1 \cup Y_2,Y_2) \ar@{^{(}->}[r] \ar[d]^{=} & H^1(Y_1 \cup Y_2) \ar@{->>}[r] \ar[d]^{\gamma} & H^1(Y_2) \ar[r]^-{0} \ar[d]^{\delta_{Y_2}^c} & H^2(Y_1 \cup Y_2,Y_2) \ar[d]^{=} \\
H^1(Y_1 \cup Y_2,Y_2) \ar[r]^-{f} & H^2_c(L_{Y_2}, Y_1 \cup Y_2) \ar[r]^-{g} \ar[d] & H^2_c(L_{Y_2},Y_2) \ar[r]^-{h} \ar[d] & H^2(Y_1 \cup Y_2,Y_2) \\
& H^2_c(L_{Y_2}) \ar[r]^-{=} & H^2_c(L_{Y_2}) &
}
\]
We easily deduce that $\im(\delta^c_{Y_2}) \subset \im(g)$ and that $g^{-1}(\im(\delta^c_{Y_2}))  = \im(\gamma)$; thus, the middle row gives rise to an exact sequence
\[
\xymatrix{
H^1(Y_1 \cup Y_2, Y_2) \ar[r]^-{f} & \im(\gamma) \ar[r]^-{g} & \im(\delta^c_{Y_2}) \ar[r] & 0.
}
\]
Of course, $H^1(Y_1 \cup Y_2, Y_2) \cong H^1(Y_1)$.

Next, the Mayer--Vietoris sequence for the decomposition $(L_{Y_2}, Y_1 \cup Y_2) = (L_{Y_1},Y_1) \cup (W, \partial W)$ shows that
\[
H^2_c(L_{Y_2}, Y_1 \cup Y_2) \cong H^2_c(L_{Y_1},Y_1) \oplus H^2(W,\partial W).
\]
Under this identification, it is easy to see that the image of $\gamma$ is identified with $\im(\delta^c_{Y_1}) \oplus H^2(W,\partial W)$, as required.
\end{proof}

\begin{proposition} \label{prop: d-relation}
Let $\tilde X$ be a homology ribbon and let $\spincs$ be a torsion spin$^c$ structure on $X$. Suppose $Y_1, Y_2$ are disjoint, homologous cross-sections of $\tilde X$ with $Y_1 \prec Y_2$. Then
\begin{equation} \label{eq: dsh-relation}
\dsh(Y_1, \spincs; \LL_{Y_1}) \le  \dsh(Y_2, \spincs; \LL_{Y_2}).
\end{equation}
\end{proposition}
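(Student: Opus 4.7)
The plan is to use the cobordism $W = W(Y_1, Y_2)\co Y_1 \to Y_2$ between the two cross-sections, together with the identification of coefficient modules from Lemma~\ref{lemma: compatible} ($\LL_{Y_1}(W) \cong \LL_{Y_2}$), to produce cobordism maps
\[
F^\circ_{W, \spincs}\co \HF^\circ(Y_1, \spincs; \LL_{Y_1}) \to \HF^\circ(Y_2, \spincs; \LL_{Y_2})
\]
for $\circ \in \{+, \infty\}$, which fit together with the maps $\pi_i$ into the usual commutative diagram. The key content is to show that $F^\infty_{W, \spincs}$ is an isomorphism and then to use its grading shift to derive the claimed $d$-invariant inequality.

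The heart of the argument is the isomorphism claim. I would decompose $W$ into 1-, 2-, and 3-handles (always possible for a connected cobordism between connected $3$-manifolds). For 1-handles (and dually 3-handles), the 3-manifold is modified by connected sum with $S^1 \times S^2$, the module $\LL$ is enlarged correspondingly, and the induced map on $\HFi$ is an isomorphism by the K\"unneth formula for connected sums with twisted coefficients. For each 2-handle, it is essential that the attaching knot be rationally null-homologous with framing equal to the rational longitude; this condition is forced by property~\ref{item: intform} of Definition~\ref{def: ribbon} (the vanishing of the intersection form on $H_2(\tilde X)$, which restricts along the inclusion $W \hookrightarrow \tilde X$ to constrain any 2-handle not to produce a class of nonzero self-intersection). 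Then Proposition~\ref{prop: HFi-surgery}, combined with its generalization in Remark~\ref{rmk: rationally-nulhom}, supplies the required isomorphism at each 2-handle stage.

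With this in hand, the grading shift of $F^\infty_{W, \spincs}$ is $(c_1(\spincs)^2 - 2\chi(W) - 3\sigma(W))/4 = -\chi(W)/2$, since $\spincs$ is torsion on $W$ and the vanishing intersection form forces $\sigma(W) = 0$. Passing to the quotients $\QQ\HFi(Y_i, \spincs; \LL_{Y_i}) \cong \F[U,U^{-1}]$ from Proposition~\ref{prop: HFi-section}, which are free of rank one and graded with generator in degree $\dtw(Y_i; \LL_{Y_i})$, the induced $\bar F^\infty$ is an $\F[U,U^{-1}]$-linear isomorphism. A diagram chase against $\bar\pi_i$ then shows that the generator of $\QQ\im(\pi_2)$ (living in grading $\dtw(Y_2; \LL_{Y_2})$) admits a nonzero preimage in $\QQ\im(\pi_1)$ at grading $\dtw(Y_2; \LL_{Y_2}) + \chi(W)/2$, whence
\[
\dtw(Y_1; \LL_{Y_1}) \le \dtw(Y_2; \LL_{Y_2}) + \frac{\chi(W)}{2}.
\]

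To convert this to the inequality for $\dsh$, I would use the identity $\chi(W) = \chi(L_{Y_2}) - \chi(L_{Y_1})$ (from $L_{Y_2} = L_{Y_1} \cup_{Y_1} W$ and $\chi(Y_1) = 0$) together with a Poincar\'e--Lefschetz computation based on the exact sequence~\eqref{eq: H(LY,Y)} and property~\ref{item: H(Xtilde,end)} of homology ribbons. The outcome is that $\chi(L_Y)$ equals $2 b_1^c(L_Y) - b_1(Y)$ up to a constant that cancels in the difference $\chi(L_{Y_2}) - \chi(L_{Y_1})$, so that $\chi(W)/2$ equals precisely the difference of the normalizing shifts built into $\dsh$. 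Substituting into the inequality for $\dtw$ then yields~\eqref{eq: dsh-relation}. The most delicate step I anticipate is the handle-by-handle verification that $F^\infty_{W, \spincs}$ is an isomorphism, requiring careful tracking of the evolving coefficient module via iterated applications of Lemma~\ref{lemma: compatible} and justification of the rational null-homology and framing conditions needed to invoke Proposition~\ref{prop: HFi-surgery} at each 2-handle.
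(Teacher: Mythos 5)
Your overall strategy---decompose $W(Y_1,Y_2)$ into handles, control the cobordism map on $\HFi$, and convert the grading shift $-\chi(W)/2$ into the difference of the normalizations built into $\dsh$---is the right one and is the paper's. But there are two genuine gaps in the middle step. First, $F^\infty_{W,\spincs}$ is \emph{not} an isomorphism in general. For a $1$-handle, $H^1_c(L_{Y_2}) \cong H^1_c(L_{Y_1}) \oplus \Z$ (the new summand is dual to the belt sphere), so $\HFi(Y_2,\spincs;\LL_{Y_2})$ has twice the $\F[U,U^{-1}]$-rank of $\HFi(Y_1,\spincs;\LL_{Y_1})$, and the cobordism map is only the inclusion of one of two summands; there is no K\"unneth isomorphism here because the coefficient module changes. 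Dually, for a $2$-handle attached along a curve $K$ that is \emph{non-torsion} in $H_1(Y_1)$, the map is surjective with kernel $[K]\cdot\HFi(Y_1,\spincs;\LL_{Y_1})$. What is true, and what the argument actually needs, is that in every case $F^\infty_{W,\spincs}$ descends to an isomorphism of the rank-one quotients $\QQ\HFi$ by the $H_1$-action; establishing this requires tracking how the cobordism map interacts with the $H_1$-actions separately for each handle type, and this --- the real content of the proof --- is missing from your write-up.

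Second, your claim that the vanishing intersection form forces every $2$-handle to be attached along a rationally null-homologous curve is false. It only rules out a rationally null-homologous attaching circle whose capped-off surface has nonzero square; it says nothing about attaching circles that are non-torsion in $H_1(Y_1)$, and such handles must occur (for instance, $b_1^c(L_Y)$ has to go both up and down across one period of the $\Z$-cover, and a cobordism from $T^3$ back to $S^3$ inside $S^3\times\R$ requires them). In that case Proposition \ref{prop: HFi-surgery} does not apply at all; one argues instead as in \cite[Proposition 9.3]{OSzAbsolute}. This case also changes the bookkeeping: $b_1^c(L_Y)$ drops by $1$ rather than staying fixed, which you need for the identity $\chi(W) = \left(b_1(Y_2)-2b_1^c(L_{Y_2})\right) - \left(b_1(Y_1)-2b_1^c(L_{Y_1})\right)$. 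Relatedly, your proposed shortcut via Euler characteristics is not available as stated: $\chi(L_Y)$ does not equal $b_1(Y)-2b_1^c(L_Y)$ in general (for a mapping torus $\chi(L_Y)=0$ while $b_1^c(L_Y)=0$ and $b_1(Y)$ is arbitrary), and your sign convention would flip the inequality; the ``constant'' you invoke is precisely the quantity whose independence of the cross-section must be proven, which the paper does handle by handle.
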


\begin{proof}
It suffices to assume that the cobordism  $W = W(Y_1,Y_2)$ is given by a single handle attachment. To be precise, let $L_{Y_1}'$ denote the union of $L_{Y_1}$ with the closure of a product neighborhood of $Y$, and assume that
\[
L_{Y_2} = L_{Y_1}' \cup \text{$k$-handle},
\]
where $k \in \{1,2,3\}$. When $k=2$, because the intersection form on $\tilde X$ vanishes, $L_{Y_2}$ cannot be obtained by attaching a $2$-handle to a rationally null-homologous curve with nonzero framing.

The proof proceeds as follows. By Lemma \ref{lemma: compatible}, the cobordism $W$ induces maps
\[
F^\circ_{W,\spincs} \co \HF^\circ(Y_1, \spincs; \LL_{Y_1}) \to \HF^\circ(Y_2, \spincs; \LL_{Y_2}).
\]
Since $c_1(\spincs)$ is torsion, the grading shift of $F^\circ_{W,\spincs}$ is equal to $-\frac{\chi(W)}{2}$. Note that $\chi(W) = 1$ when $k=2$ and $-1$ otherwise.

We will show in each case that $F^\infty_{W,\spincs}$ descends to an isomorphism
\[
\QQ \HFi(Y_1, \spincs; \LL_{Y_1}) \to \QQ \HFi(Y_2, \spincs; \LL_{Y_2}).
\]
Thus, by the usual argument,
\begin{equation} \label{eq: d-relation}
d(Y_1,\spincs; \LL_{Y_1}) \le d(Y_2,\spincs; \LL_{Y_2}) + \frac{\chi(W)}{2}.
\end{equation}
At the same time, we will see in each case that
\begin{equation} \label{eq: chi(W)}
\chi(W) =  \left(b_1(Y_2) - 2 b_1^c(L_{Y_2}) \right) - \left(b_1(Y_1) - 2 b_1^c(L_{Y_1}) \right),
\end{equation}
from which \eqref{eq: dsh-relation} follows.

We now consider the different values for $k$. A summary can be found in Table \ref{table: handles}.

\begin{table}
\begin{tabular}{|c|c|c|c|c|c|}
  \hline
  Handle type & $\Delta b_1(Y)$ & $\Delta b_1^c(L_Y)$ & $\Delta b_1^c(R_Y)$ & $\chi(W)$ & $F^\infty_{W,\spincs}$ \\ \hline
  $1$ & $+1$ & $+1$ & $0$ & $-1$ & injective \\
  $2$, $[K]$ torsion & $+1$ & $0$ & $+1$ & $+1$ & isomorphism  \\
  $2$, $[K]$ non-torsion & $-1$ & $-1$ & $0$ & $+1$ & surjective  \\
  $3$ & $-1$ & $0$ & $-1$ & $-1$ & isomorphism \\
  \hline
\end{tabular}

\caption{Summary of handle additions in the proof of Proposition \ref{prop: d-relation}. The convention is that $\Delta b_1(Y) = b_1(Y_2) - b_1(Y_1)$, etc. It is easy to see that $\Delta b_1(Y) - 2 \Delta b_1^c(L_Y) = \chi(W)$ and that $\Delta b_1(Y) = \Delta b_1^c(L_Y) + \Delta b_1^c(R_Y)$.} \label{table: handles}
\end{table}

\begin{enumerate}
\item
If $L_{Y_2} \cong L_{Y_1}' \cup 1$-handle, then $Y_2 \cong Y_1 \conn S^1 \times S^2$, so $b_1(Y_2) = b_1(Y_1)+1$. By looking at the exact sequence on cohomology for the pair $(L_{Y_2}, L_{Y_1})$, we see that $H^1_c(L_{Y_2}) \cong H^1_c(L_{Y_1}) \oplus \Z$, where a generator of the $\Z$ factor maps to the Poincar\'e dual of $[\{\pt\} \times S^2]$ in $H^1(Y_2)$. Hence, $b_1^c(L_{Y_2})=b_1^c(L_{Y_1})+1$, so \eqref{eq: chi(W)} holds.

As in \cite[Section 4.3]{OSz4Manifold}, we have
\[
\HFi(Y_2,\spincs_2;\LL_{Y_2}) \cong \HFi(Y_1,\spincs_1;\LL_{Y_1})[\tfrac12] \oplus \HFi(Y_1,\spincs_1;\LL_{Y_1})[-\tfrac12],
\]
where the action of the generator of $H_1(S^1 \times S^2)$ takes the first summand to the second, and the map $F^\infty_{W,\spincs}$ is given by the inclusion of the first summand.

\item
If $L_{Y_2}$ is obtained by attaching a $2$-handle to $L'_{Y_1}$ along a curve $K \subset Y_1$, there are two possibilities.

If $K$ represents a torsion class in $H_1(Y)$, then the $2$-handle must be attached along the rational longitude of $K$; otherwise, there would be a closed surface in $\tilde X$ with nontrivial self-intersection, which violates our assumptions. Thus, $Y_2$ is obtained by $0$-surgery on $K$, and $b_1(Y_2) = b_1(Y_1)+1$. Moreover, the inclusion $L_{Y_1} \to L_{Y_2}$ induces an isomorphism $H^1_c(L_{Y_2}) \to H^1_c(L_{Y_1})$. By Proposition \ref{prop: HFi-surgery} (and its extension to the rationally nulhomologous case in Remark \ref{rmk: rationally-nulhom}), $F^\infty_{W, \spincs}$ is an isomorphism that respects the $H_1$ actions.

If $K$ represents a nontorsion class in $H_1(Y_1)$, then $b_1(Y_2) = b_1(Y_1)-1$. Equation \eqref{eq: H1(Y)-end} implies that is a class $\alpha \in H^1_c(L_{Y_1};\Q)$ such that $\gen{\alpha|_{Y_1}, [K]} =1$, and therefore $[K]$ is also nontorsion in $\Hlf_1(L_{Y_1})$. The restriction map $H^1_c(L_{Y_2}) \to H^1_c(L_{Y_1})$ is injective, with image equal to the set of elements that evaluate to $0$ on $[K]$, so $b_1^c(L_{Y_2})= b_1^c(L_{Y_1}) -1$. Just as in the untwisted case (see \cite[Proposition 9.3]{OSzAbsolute}), $F^\infty_{W,\spincs}$ descends to an isomorphism
\[
\HFi(Y_1,\spincs;\LL_{Y_1})/([K] \cdot \HFi(Y_1,\spincs;\LL_{Y_1})) \xrightarrow{\cong} \HFi(Y_2,\spincs;\LL_{Y_2}),
\]
which respects the $H_1$ actions.

\begin{figure}
\labellist \small
 \pinlabel {{\color{red} $\alpha_0$}} [t] at 60 47
 \pinlabel {{\color{blue} $\beta_0$}} [b] at 60 60
 \pinlabel {$z_0$} at 60 100
 \pinlabel {$a$} [bl] at 29 60
 \pinlabel {$b$} [br] at 91 60
\endlabellist
\includegraphics{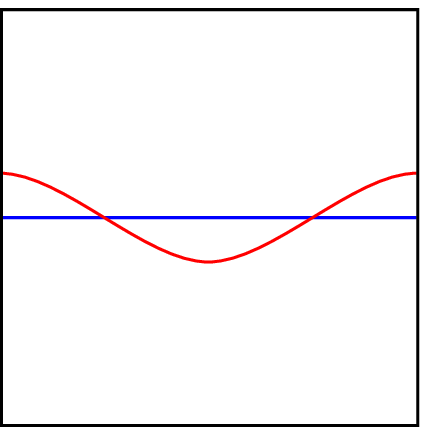}
\caption{Standard Heegaard diagram $(T^2, \alpha_0, \beta_0, z_0)$ for $S^1 \times S^2$.}
\label{fig: S1xS2}
\end{figure}

\item
Suppose $L_{Y_2}$ is obtained by attaching a $3$-handle to $L'_{Y_1}$ along an embedded, non-separating sphere $S \subset Y_2$, which necessarily represents a primitive homology class. Then $Y_1 \cong Y_2 \conn S^1 \times S^2$, so $b_1(Y_2) = b_1(Y_1)-1$, and $H^1_c(L_{Y_2}) \cong H^1_c(L_{Y_1})$. As seen in Section \ref{sec: HF-prelim}, we have $\LL_{Y_2} \cong \LL_{Y_1} / (1-t) \cong \LL_{Y_1}(W)$, where $t$ is the class in $H^1(Y_1)$ Poincar\'e dual to $[S]$.

By \cite[Lemma 4.11]{OSz4Manifold}, we may represent $Y_1$ by a split Heegaard diagram
\[
(\Sigma', \bm\alpha', \bm\beta', z') = (\Sigma, \bm\alpha, \bm\beta, z) \conn (T^2, \alpha_0, \beta_0, z_0),
\]
where $(\Sigma, \bm\alpha, \bm\beta, z)$ represents $Y_2$, $(T^2, \alpha_0, \beta_0, z_0)$ is a standard diagram for $S^1 \times S^2$ as shown in Figure \ref{fig: S1xS2}, and the connected sum is taken in the regions containing the basepoints. The curves $\alpha_0, \beta_0$ meet in two points $a, b$. For any $\x \in \T_\alpha \cap \T_\beta$, there are a pair of holomorphic bigons $\phi^{\pm}_\x \in \pi_2(\x \times \{a\}, \x \times \{b\})$. We may choose the additive assignment such that for each $\x$, the disks $\phi^+_\x$ and $\phi^-_\x$ contribute $1$ and $t$, respectively, in the differential. Just as in Ozsv\'ath and Szab\'o's proof of the K\"unneth formula for connected sums \cite[Theorem 6.2]{OSzProperties}, with respect to a sufficiently stretched complex structure, $\CFi(\Sigma', \bm\alpha', \bm\beta', z'; \LL_{Y_1})$ is then isomorphic to the mapping cone
\[
\CFi(\Sigma, \bm\alpha, \bm\beta, z; \LL_{Y_2})[t^{\pm1}] \xrightarrow{1-t} \CFi(\Sigma, \bm\alpha, \bm\beta, z; \LL_{Y_2})[t^{\pm1}]
\]
where the two copies correspond to $a$ and $b$ respectively. The map $F^\infty_{W, \spincs}$ is given on the chain level by setting $t=1$ and projecting onto the second factor. It follows that
\[
F^\infty_{W, \spincs} \co \HFi(Y_1, \spincs; \LL_{Y_1}) \to \HFi(Y_2, \spincs; \LL_{Y_2})
\]
is an isomorphism.
\end{enumerate}

In each of the three cases, it is easy to see that the induced maps on $\QQ \HF^\infty$ are isomorphisms and that \eqref{eq: chi(W)} holds, as required.
\end{proof}

\begin{proposition} \label{prop: d-even}
If $\tilde X$ is a homology $S^3 \times \R$, then $\dsh(Y, \spincs; \LL_{Y})$ is an {\em even} integer.
\end{proposition}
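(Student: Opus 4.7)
By Proposition~\ref{prop: d-rho}, $\dsh(Y,\spincs;\LL_Y) \equiv \rho(Y,\spincs) \pmod{2\Z}$, so it will suffice to show that $\rho(Y,\spincs) = 0$ in $\Q/2\Z$. Since $\tilde X$ is a homology $S^3 \times \R$, we have $H^2(\tilde X;\Z)=0$, hence $w_2(\tilde X)=0$, and $\tilde X$ admits a unique spin structure, to which $\spincs$ must reduce; in particular $c_1(\spincs)=0$ and $\spincs|_Y$ is the spin structure on $Y$ induced from $\tilde X$.

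The key topological input I would use is that every compact codimension-zero submanifold $V\subset\tilde X$ has $\sigma(V)=0$ and inherits a spin structure from $\tilde X$. The signature vanishes because $H_2(\tilde X)=0$ implies every class in $H_2(V)$ maps to zero in $H_2(\tilde X)$ and hence bounds a $3$-chain there; consequently the algebraic intersection of any two surface classes in $H_2(V)$, computed geometrically in $\tilde X$ or equivalently in $V$, vanishes, so the intersection form on $H_2(V;\Q)$ is identically zero.

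The strategy will then be to produce a compact spin submanifold $V\subset L_Y$ with $\partial V = Y\sqcup(-Z)$, where $Z$ is an integer homology $3$-sphere. Such a $V$ is a spin cobordism with $c_1^2 = 0 = \sigma(V)$, and the cobordism formula for $\rho$ gives
\[
\rho(Y,\spincs) \equiv \rho(Z,\spincs|_Z) \pmod{2\Z}.
\]
For any integer homology sphere $Z$ with its unique spin structure, Rokhlin's theorem asserts $\sigma(W)\equiv 8\mu(Z)\pmod{16}$ for every compact spin $4$-manifold $W$ bounding $Z$, so $\rho(Z,\spincs|_Z) = -\sigma(W)/4 \equiv -2\mu(Z) \equiv 0\pmod{2\Z}$, completing the argument.

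The hard part will be producing $V$, or equivalently finding an integer homology sphere $Z$ embedded in the interior of $L_Y$ that separates a compact neighborhood of $Y$ from the non-compact end. The hypotheses on $\tilde X$ (vanishing of $H_1$ and $H_2$ integrally, together with property~\eqref{item: H(Xtilde,end)} with field coefficients) imply, via Mayer--Vietoris, that each end $\epsilon$ of $\tilde X$ has $H_1(\epsilon;\k) = H_2(\epsilon;\k) = 0$ for every field $\k$. I would exhaust $L_Y$ by sublevel sets $K_n$ of a proper Morse function, arranged so that $\partial K_n = Y\sqcup(-Z_n)$ with $Z_n$ connected, and use the long exact sequences of the pairs $(E_n,Z_n)$ (where $E_n = \overline{L_Y\setminus K_n}$ is a neighborhood of the end) to conclude that for $n$ sufficiently large, $H_1(Z_n;\k)=0$ for every field $\k$. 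Since $Z_n$ is a closed connected $3$-manifold, $H_1(Z_n;\Z)$ is finitely generated, and its vanishing with every field coefficient forces $H_1(Z_n;\Z)=0$; thus $Z_n$ is an integer homology sphere, and taking $V = K_n$ completes the construction.
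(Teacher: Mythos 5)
Your reduction via Proposition~\ref{prop: d-rho} to showing $\rho(Y,\spincs)\equiv 0 \pmod{2\Z}$, the observation that $\spincs$ is spin so $c_1=0$, and the vanishing of intersection forms of compact pieces of $\tilde X$ all match the paper's proof and are fine. The gap is in the ``hard part'': the claim that one can find an integer homology sphere $Z_n$ sufficiently far out in the end of $L_Y$. This is false in general, and the paper itself supplies a counterexample: in Example~\ref{ex: exoticR4}, $\tilde X=(W\minus B^4)\cup_Y(-Z_0)$ is a homology $S^3\times\R$ in the sense of Definition~\ref{def: ribbon}, yet no cross-section lying in the $Z_0$ end is $d$-symmetric (Proposition~\ref{prop: sandwich}); since every rational homology sphere is $d$-symmetric (Proposition~\ref{prop: QHS}), that end contains no homology sphere cross-section at all (see also Remark~\ref{R:exotic}). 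The homological step also does not go through: property~\eqref{item: H(Xtilde,end)} controls the (pro-)homology of the end, not of individual level sets, and indeed in the example above one can take $Z_n=S^3_0(K)$ with $H_1(Z_n;\k)=\k$ for every field $\k$, even though $H_1(\tilde X,\epsilon;\k)=0$. Vanishing of $H_1(\tilde X,\epsilon;\k)$ only says that $1$-cycles die in the limit; it does not force any particular $Z_n$ to be a homology sphere.

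The paper circumvents this by never compactifying the end with a cross-section. It takes an arbitrary compact spin $4$-manifold $W$ with $\partial W=Y$ (so $c_1(\spinct)^2=0$), forms the \emph{open} spin manifold $W_\infty=W\cup_Y R_Y$, and shows that $\sigma(W_\infty)=\sigma(W)$ while the intersection form of $W_\infty$ is unimodular (this is where $H_1(\tilde X)=H_2(\tilde X)=0$ together with property~\eqref{item: H(Xtilde,end)} is used). Being even and unimodular, this form has signature divisible by $8$ by van der Blij's theorem, so $\rho(Y,\spincs)=-\sigma(W)/4\equiv 0\pmod{2\Z}$. In effect, van der Blij plays the role you wanted Rokhlin to play, and the unimodularity of the intersection form of the end-capped manifold replaces the (unavailable) homology sphere. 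If you want to repair your argument along your original lines, you would need to replace ``find a homology sphere in the end'' by this kind of statement about the intersection form of the noncompact piece.
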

\begin{proof}
By Proposition~\ref{prop: d-rho}, we know that $\dsh(Y, \spincs; \LL_{Y}) \equiv \rho(Y,\spincs) \pmod {2\Z}$, where $\rho(Y,\spincs)$ is defined by Equation \eqref{eq:rho}. Since the spin$^c$ structure $\spincs$ is a spin structure, we may take the manifold $W$ in \eqref{eq:rho} to be a spin manifold, and so the term $c_1(\spinct)^2 =0$. As in the proof of~\cite[Theorem  3.4]{ruberman-saveliev:4tori}, the signature of $W$ is the same as the signature of the open manifold
\[
W_\infty = W \cup_Y R_Y.
\]
The vanishing of the homology of $\tilde X$ together with property \ref{item: H(Xtilde,end)} from Definition \ref{def: ribbon} implies that the intersection form on the spin manifold $W_\infty$ is unimodular, and hence van der Blij's theorem~\cite[\S 5]{milnor-husemoller} says that its signature is divisible by $8$.
\end{proof}

\begin{remark}
By turning the cobordism $W$ around, it is also easy to see how the quantity $b_1^c(R_Y)$ behaves: we see that $b_1^c(R_{Y_2}) - b_1^c(R_{Y_1})$ equals $0$ in the case of a $1$-handle addition or a $2$-handle addition along a non-torsion curve, $1$ in the case of a $2$-handle addition along a torsion curve, and $-1$ in the case of a $3$-handle addition. In particular, we see from Table \ref{table: handles} that the quantity
\[
b_1(Y) - b_1^c(L_Y) - b_1^c(R_Y)
\]
is independent of the choice of cross-section $Y$. The Mayer--Vietoris sequence (top row of \eqref{eq: MV}) shows that this quantity equals the rank of the coboundary map $H^1(Y) \to H^2_c(\tilde X)$.
\end{remark}

\subsection{Invariants for homology \texorpdfstring{$S^1 \times S^3$s}{S3xS1s}}

We are now finally able to prove the main theorem from the introduction, which we restate as follows:

\begin{theorem} \label{thm: dfour}
Let $X$ be an oriented homology $S^1 \times S^3$, let $\tilde X$ be its infinite cyclic cover, and let $\spincs_X$ be the spin$^c$ structure on $\tilde X$ pulled back from $X$. Then for any cross-section $Y$ of $\tilde X$, the shifted correction term $\dsh(Y, \spincs_X; \LL_Y)$ depends only on the homology class of $Y$ in $H_3(\tilde X)$ or equivalently on its image $\hgen \in H_3(X)$. We denote this number by $\dfour(X, \hgen)$; it is an invariant of $X$ under orientation-preserving diffeomorphisms that preserve the choice of homology class.
\end{theorem}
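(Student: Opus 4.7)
My plan is to combine the monotonicity inequality of Proposition \ref{prop: d-relation} with the $\Z$-equivariance provided by the deck transformation of $\tilde X \to X$. Let $\tau \co \tilde X \to \tilde X$ denote the generator of the deck group, chosen (via the orientation determined by $\hgen$) so that $Y \prec \tau(Y)$ for every cross-section $Y$. Because $\spincs_X$ is pulled back from $X$, we have $\tau^*\spincs_X = \spincs_X$; moreover $\tau$ carries $L_Y$ diffeomorphically onto $L_{\tau(Y)}$, and so pushes $\LL_Y = \F[H^1(Y)/H^1_c(L_Y)]$ isomorphically onto $\LL_{\tau(Y)}$. Consequently, diffeomorphism invariance of Heegaard Floer homology yields
\[
\dsh(Y, \spincs_X; \LL_Y) \;=\; \dsh(\tau^n(Y), \spincs_X; \LL_{\tau^n(Y)}) \qquad \text{for all } n \in \Z.
\]

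Given this, I would first dispatch the case of two disjoint cross-sections $Y_1 \prec Y_2$ representing $\hgen$. Since $Y_2$ is compact and $\tau$ shifts arbitrarily far toward the right end, pick $n$ large enough that $Y_1 \prec Y_2 \prec \tau^n(Y_1)$. Chaining two applications of Proposition \ref{prop: d-relation} with the deck-transformation equality gives
\[
\dsh(Y_1; \LL_{Y_1}) \;\le\; \dsh(Y_2; \LL_{Y_2}) \;\le\; \dsh(\tau^n(Y_1); \LL_{\tau^n(Y_1)}) \;=\; \dsh(Y_1; \LL_{Y_1}),
\]
forcing equality throughout. To pass to arbitrary (possibly intersecting) cross-sections $Y_1, Y_2$ in the homology class $\hgen$, I would pick $n$ so large that $\tau^n(Y_1)$ is disjoint from $Y_1 \cup Y_2$ and lies to the right of both, and then apply the disjoint case to each of the pairs $(Y_1, \tau^n(Y_1))$ and $(Y_2, \tau^n(Y_1))$.

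The diffeomorphism-invariance clause of the statement should then follow easily: given an orientation-preserving $f \co X \to X'$ sending $\hgen$ to $\hgen'$, it lifts to $\tilde f \co \tilde X \to \tilde X'$ sending any cross-section $Y$ to a cross-section $\tilde f(Y)$ representing $\hgen'$, and $\tilde f$ identifies $\spincs_X$ with $\spincs_{X'}$ and $\LL_Y$ with $\LL_{\tilde f(Y)}$. The only step I expect to require care is the geometric claim that $\tau^n(Y_1) \subset R_{Y_2}$ for all sufficiently large $n$; this is where the definition of a homology ribbon with two ends really enters, and it should follow from compactness of $Y_1 \cup Y_2$ together with the fact that $\tau$ acts properly discontinuously and shifts in the preferred end direction (equivalently, that for the proper map $f \co \tilde X \to \R$ used to produce cross-sections, $f \circ \tau^n$ tends uniformly to $+\infty$ on compacta as $n \to \infty$).
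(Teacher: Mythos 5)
Your proposal is correct and takes essentially the same route as the paper: the paper's proof likewise combines the $\tau$-invariance of $\dsh(Y,\spincs_X;\LL_Y)$ with two applications of Proposition \ref{prop: d-relation}, sandwiching an arbitrary cross-section $Y'$ as $\tau^{-n}(Y) \prec Y' \prec \tau^n(Y)$ for $n$ large. Your two-step organization (disjoint case first, then general) is just a repackaging of the same sandwich argument, and your closing caveat about $\tau^n(Y_1) \subset R_{Y_2}$ for large $n$ is exactly the (easy) point the paper leaves implicit.
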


\begin{proof}
Fix a generator for $H_3(\tilde X)$. Let $\tau$ be a generator of the deck transformation group such that for any two cross-sections $Y, Y'$ representing the fixed generator, $\tau^{-n}(Y) \prec Y' \prec \tau^n(Y)$ for all $n$ sufficiently large. For any $n \in \Z$, note that
\[
\dsh(\tau^n(Y), \spincs_X; \LL_{\tau^n(Y)}) = \dsh(Y, \spincs_X; \LL_Y),
\]
since the spin$^c$ structure $\spincs_X$ on $\tilde X$ is $\tau$-invariant and the deck transformation $\tau^n$ takes $L_Y$ to $L_{\tau^n(Y)}$. Thus, by Proposition \ref{prop: d-relation}, we have
\[
\dsh(Y, \spincs_X; \LL_Y) \le \dsh(Y', \spincs; \LL_{Y'}) \le \dsh(Y, \spincs_X; \LL_Y)
\]
and hence equality holds.
\end{proof}

Next, we prove the symmetries stated in Proposition \ref{prop: dfour-symmetries}. It is more convenient to work in the more general setting of open manifolds. Given a homology ribbon $\tilde X$ equipped with a spin$^c$ structure $\spincs$, and any cross section $Y$ of $\tilde X$, define
\begin{equation} \label{eq: d(X,Y)}
\dfour(\tilde X,Y) = \dsh(Y,\spincs; \LL_Y).
\end{equation}
(For convenience, we suppress the spin$^c$ structure $\spincs$ from the notation.) When $\tilde X$ is the $\Z$ cover of a homology $S^1 \times S^3$ $X$ and $\spincs = \spincs_X$, then by definition $\dfour(\tilde X,Y) = \dfour(X,\hgen)$.

There are two possible orientation changes to consider.

\begin{itemize}
\item If we leave the orientation on $\tilde X$ fixed but change the orientation of $Y$, the roles of $L_Y$ and $R_Y$ are interchanged: $L_{-Y} = R_Y$ and $R_{-Y} = L_Y$. According to our definition, we have
\begin{equation} \label{eq: d(X,-Y)}
\dfour(\tilde X,-Y) = \dsh(-Y, \spincs; \RR_Y),
\end{equation}
where $\RR_Y = \F[H^1(Y)/H^1(R_Y)]$.

\item If we reverse both the orientations of both $\tilde X$ and $Y$, then the roles of $L_Y$ and $R_Y$ do not change, since $\partial(-L_Y) = -Y$. Thus, we may write
\begin{equation} \label{eq: d(-X,-Y)}
\dfour(-\tilde X,-Y) = \dsh(-Y, \spincs; \LL_Y).
\end{equation}
Combining this argument with the previous one, we deduce that
\begin{equation} \label{eq: d(-X,Y)}
\dfour(-\tilde X,Y) = \dsh(Y, \spincs; \RR_Y).
\end{equation}
\end{itemize}

\begin{proof}[Proof of Proposition \ref{prop: dfour-symmetries}]
Suppose that $Y$ is a cross-section of a homology ribbon $\tilde X$. If $Y$ is a rational homology sphere, then $\dfour(X,Y) = d(Y,\spincs)$. By inspecting equations \eqref{eq: d(X,Y)} through \eqref{eq: d(-X,Y)}, it is immediate that
\begin{equation} \label{eq: d-section-QHS}
\dfour(\tilde X,Y) = \dfour(-\tilde X,Y) = -\dfour(\tilde X,-Y) = -\dfour(-\tilde X,-Y).
\end{equation}
Likewise, when $Y$ is merely $d$-symmetric, we obtain
\begin{equation} \label{eq: d-symmetric-section}
\dfour(-\tilde X,-Y) = -\dfour(\tilde X,Y) \quad \text{and} \quad \dfour(-\tilde X,Y) = -\dfour(\tilde X,-Y).
\end{equation}
These translate to \eqref{eq: dfour-QHS} and \eqref{eq: dfour-symmetric}, respectively.

Finally, if $X$ is the mapping torus of a diffeomorphism $\phi \co Y \to Y$ and $\tilde X$ is its universal cover, then $\tilde X \cong Y \times \R$. As seen in Example \ref{ex: YxR}, we have $H^1_c(L_Y) = H^1_c(R_Y) = 0$, and therefore the coefficient modules $\LL_Y$ and $\RR_Y$ are both simply $\HomRing{Y}$. Equations \eqref{eq: d(X,Y)} through \eqref{eq: d(-X,Y)} yield \eqref{eq: dfour-fibered}.
\end{proof}

The following proposition is an immediate consequence of Proposition \ref{prop: d-relation} and equation \eqref{eq: d-symmetric-section}:

\begin{proposition} \label{prop: sandwich}
Let $\tilde X$ be a homology ribbon and let $\spincs$ be a torsion spin$^c$ structure on $\tilde X$. Suppose $Y_1 \prec Y_2$ are disjoint cross sections of $\tilde X$, and $(Y_1, \spincs)$ and $(Y_2, \spincs)$ are both $d$-symmetric. Then $\dfour(\tilde X,Y_1) = \dfour(\tilde X,Y_2)$. Moreover, if $Y'$ is any other cross-section with $Y_1 \prec Y' \prec Y_2$, then
\[
\dfour(-\tilde X,-Y') = -\dfour(\tilde X,Y') = -\dfour(\tilde X,Y_1).
\]
\end{proposition}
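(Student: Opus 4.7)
The plan is to derive both conclusions from two applications of the monotonicity in Proposition \ref{prop: d-relation}, bridged by the $d$-symmetry hypothesis. First I would apply Proposition \ref{prop: d-relation} directly to $Y_1 \prec Y_2$ in $\tilde X$ to obtain
\[
\dfour(\tilde X, Y_1) \le \dfour(\tilde X, Y_2).
\]

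Next I would run the same monotonicity in the homology ribbon $-\tilde X$. Unwinding definitions, $Y_1 \prec Y_2$ in $\tilde X$ is equivalent to $-Y_1 \prec -Y_2$ in $-\tilde X$: as noted in the passage leading up to \eqref{eq: d(-X,-Y)}, reversing the orientation of the ambient manifold simultaneously with the orientation of each cross-section preserves the left/right decomposition as sets, so the condition $-Y_2 \subset R_{-Y_1}^{-\tilde X}$ is identical to $Y_2 \subset R_{Y_1}^{\tilde X}$. Hence Proposition \ref{prop: d-relation} in $-\tilde X$ yields
\[
\dfour(-\tilde X, -Y_1) \le \dfour(-\tilde X, -Y_2).
\]
Feeding in the $d$-symmetry of $Y_1$ and $Y_2$ via equation \eqref{eq: d-symmetric-section} converts this into $-\dfour(\tilde X, Y_1) \le -\dfour(\tilde X, Y_2)$, the reverse of the first inequality. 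Combining the two forces $\dfour(\tilde X, Y_1) = \dfour(\tilde X, Y_2)$.

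For the second statement, if $Y_1 \prec Y' \prec Y_2$ in $\tilde X$, two further applications of Proposition \ref{prop: d-relation} sandwich $\dfour(\tilde X, Y')$ between equal values, so $\dfour(\tilde X, Y') = \dfour(\tilde X, Y_1)$. Repeating the sandwich in $-\tilde X$, where $-Y_1 \prec -Y' \prec -Y_2$, and then invoking \eqref{eq: d-symmetric-section} at the two outer (still $d$-symmetric) cross-sections gives
\[
-\dfour(\tilde X, Y_1) \;\le\; \dfour(-\tilde X, -Y') \;\le\; -\dfour(\tilde X, Y_2) \;=\; -\dfour(\tilde X, Y_1),
\]
yielding $\dfour(-\tilde X, -Y') = -\dfour(\tilde X, Y_1) = -\dfour(\tilde X, Y')$ as required.

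There is no genuine obstacle: the whole argument is packaged into Proposition \ref{prop: d-relation} and the definition of $d$-symmetry, and the one piece of bookkeeping to verify is that orientation reversal of $\tilde X$ preserves the ordering of the correspondingly reoriented cross-sections, which follows from $L_{-Y}^{-\tilde X} = L_Y^{\tilde X}$ as subsets of $\tilde X$.
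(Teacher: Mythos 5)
Your proof is correct and is exactly the argument the paper intends: the authors state the proposition as "an immediate consequence of Proposition \ref{prop: d-relation} and equation \eqref{eq: d-symmetric-section}," and you have filled in precisely those two applications of monotonicity (in $\tilde X$ and in $-\tilde X$) bridged by $d$-symmetry. The one bookkeeping point you flag — that $Y_1 \prec Y_2$ in $\tilde X$ gives $-Y_1 \prec -Y_2$ in $-\tilde X$ because simultaneous orientation reversal preserves the left/right decomposition — is handled correctly and matches the paper's discussion preceding \eqref{eq: d(-X,-Y)}.
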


This result is useful for obstructing the presence of $d$-symmetric cross-sections (e.g. rational homology spheres) in the ends of exotic $\R^4$s, as in the following example.

\begin{example} \label{ex: exoticR4}
Let $K$ denote the positive, untwisted Whitehead double of the right-handed trefoil, let $Y = S^3_0(K)$, and let $W$ be obtained by attaching a $0$-framed $2$-handle to $D^4$ along $K$, so that $\partial W = Y$.
Let $Z$ denote the complement of a topological slice disk for $K$, with $\pi_1(Z) = \Z$. We may choose a smooth structure on $Z_0 = Z \minus \{\pt\}$; then $Z_0$ is an open, smooth $4$-manifold with $\partial Z_0 = Y$. Then $R = W \cup_Y {-Z_0}$ is an exotic $\R^4$, and $\tilde X = (W \minus B^4) \cup_Y {-Z_0}$ is an exotic $S^3 \times \R$ with one end smoothly modeled on $S^3 \times (-\infty, 0]$. Since $b_1(L_Y)=0$, we have $\LL_Y = \F[H^1(Y)]$. As seen in Example \ref{ex: trefoil}, $\dfour(\tilde X,Y) = 0$ and $\dfour(-\tilde X,-Y) = 2$. If the generator of $H_3(Z_0)$ were represented by any $d$-symmetric manifold, this would contradict Proposition \ref{prop: sandwich}.
\end{example}
\begin{remark}\label{R:exotic}
The existence of an exotic $\R^4$ not containing a homology sphere arbitrarily far out in its end seems to be $4$-manifold folklore; compare~\cite[Page 96, Remark 1]{kirby:4-manifolds}. The proof depends on Donaldson's diagonalization theorem. Bob Gompf pointed out to us that the extension of Donaldson's theorem to non-simply connected manifolds~\cite{donaldson:orientation} can be used to show that there is no rational homology sphere arbitrarily far out in the end.
\end{remark}

\begin{example} \label{ex: T3-partial}
The three-torus $T^3$ embeds in $\R^4$, so it occurs as a cross-section of $S^3 \times \R$. By Lemma \ref{lemma: HS3xR}, we have $H^1(T^3) = H^1(L_{T^3}) \oplus H^1(R_{T^3}) = H^1_c(L_{T^3}) \oplus H^1_c(R_{T^3})$. Because the triple cup product vanishes on each summand, one summand must have rank $1$ and the other rank $2$; by varying the orientations, we may interchange them. Because $S^3 \prec T^3 \prec S^3$, we deduce in either case that $\dsh(T^3,\spincs; \LL_{T^3})=0$. As we saw in Example \ref{ex: T3}, this means that for any subspace $A \subset H^1(T^3)$ of rank $1$ or $2$, we have $\dsh(T^3,\spincs;M_A)=0$.

On the other hand, let $X$ be a homology $S^1 \times S^3$ obtained as the mapping torus of a self-diffeomorphism of $Y=T^3$. (Such manifolds play a key role in the construction of the Cappell--Shaneson homotopy spheres \cite{CappellShaneson4Manifolds,cappell-shaneson:knots}.) Then $H^1_c(L_{T^3}) = H^1_c(R_{T^3}) = 0$. From Example \ref{ex: T3}, we deduce that $\dfour(X,y) = \dfour(-X,-y) = 2$. Hence, $X$ does not admit any $d$-symmetric cross-section. (Of course, because any cross-section of $X$ admits a degree $1$ map to $T^3$ and therefore has nonvanishing triple cup product, cross-sections of the form $Q \conn n(S^1 \times S^2)$ are are automatically excluded.)
\end{example}

\section{Applications to knotted spheres} \label{sec: 2knots}

In this section, we use the invariants defined above to study Seifert surfaces for $2$-knots in $S^4$. Given a smoothly embedded, oriented $2$-sphere $\Sigma$ in $S^4$, a Seifert surface is a smoothly embedded, compact, connected, oriented $3$-manifold with boundary $\Sigma$. Let $X(\Sigma)$ denote the surgered manifold $S^4 \minus \nbd(\Sigma) \cup (D^3 \times S^1)$, which is a homology $S^1 \times S^3$. Any Seifert surface of $\Sigma$ can be capped off to be a cross-section of $X(\Sigma)$. (In a slight abuse of notation, if $Y \minus B^4$ occurs as a Seifert surface of $\Sigma$, we will sometimes say that $\Sigma$ has $Y$ as a Seifert surface.) The homology class $\hgen$ of a capped-off Seifert surface $Y$ in $H_3(X)$ is determined by the orientation of $\Sigma$; therefore, we define $\dknot(\Sigma) = \dfour(X(\Sigma), \hgen)$, which is an invariant of the smooth isotopy class of $\Sigma$.

Let $\Sigma^r$ denote $\Sigma$ with reversed orientation, and let $\bar \Sigma$ denote the image of $\Sigma$ under a reflection of $S^4$. If $Y \subset X(\Sigma)$ is a capped-off Seifert surface for $\Sigma$, then
\begin{align*}
\dknot(\Sigma^r) &= \dfour(X(\Sigma), -\hgen) \\
\dknot(\bar\Sigma) &= \dfour(-X(\Sigma), \hgen) \\
\dknot(\bar\Sigma^r) &= \dfour(-X(\Sigma), -\hgen).
\end{align*}
The four numbers $\dknot(\Sigma)$, $\dknot(\Sigma^r)$, $\dknot(\bar\Sigma)$, and $\dknot(\bar\Sigma^r)$ may \emph{a priori} all be different. We say that say that $\Sigma$ is \emph{invertible}, \emph{positive amphicheiral} or \emph{negative amphicheiral} if $\Sigma$ is smoothly isotopic to $\Sigma^r$, $\bar\Sigma$, or $\bar\Sigma^r$, respectively; the $\dknot$ invariant can thus be used to obstruct such symmetries. Moreover, the symmetries from Proposition \ref{prop: dfour-symmetries} each translate to a symmetry of the $2$-knot invariants. For instance, if $\Sigma$ has a Seifert surface $Y$ that is $d$-symmetric, then
\begin{equation} \label{eq: dknot-symmetric}
\dknot(\Sigma) = -\dknot(\bar\Sigma^r) \quad \text{and} \quad \dknot(\Sigma^r) = -\dknot(\bar\Sigma).
\end{equation}
In particular, if $\Sigma$ is a \emph{ribbon knot} (i.e., bounds an immersed $3$-ball with ribbon singularities), a theorem of Yanagawa \cite{YanagawaRibbon1} states that $\Sigma$ has a Seifert surface diffeomorphic to $\#n(S^1 \times S^2) \minus B^3$ for some $n$; it follows that
\[
\dknot(\Sigma) = \dknot(\Sigma^r) = \dknot(\bar \Sigma) = \dknot(\bar \Sigma^r) = 0.
\]
Likewise, if $\Sigma$ is a fibered $2$-knot with capped-off fiber $Y$, then
\[
\dknot(\Sigma) = \dknot(\bar\Sigma) = \dsh(Y,\spincs_X; \HomRing{Y}) \quad \text{and} \quad
\dknot(\Sigma^r) = \dknot(\bar\Sigma^r) = \dsh(-Y,\spincs_X; \HomRing{Y}).
\]

\begin{example} \label{ex: irreversible}
If $\Sigma$ is the $5$-twist-spin of the right-handed trefoil, then $\Sigma$ has the Poincar\'e homology sphere as a fiber \cite[p. 306]{Rolfsen}. Hence, $\dknot(\Sigma)= d(\bar\Sigma) = 2$ and $\dknot(\Sigma^r) = d(\bar\Sigma^r) = -2$. We deduce that $\Sigma$ is neither reversible (which was also proven by Gordon \cite{GordonReversibility}) nor negative amphicheiral.
\end{example}

\begin{example} \label{ex: no-symmetric-seifert}
Let $\Sigma$ be the $6$-twist-spin of the right-handed trefoil $K$ and $Y$ the fiber, which is the $6$-fold cyclic branched cover of $K$. As explained in \cite[p. 307]{Rolfsen}, $Y$ can be obtained by $(0,0)$ surgery on the positive Whitehead link, or equivalently by $(0,0,-1)$ surgery on the Borromean rings. (Hence, $Y$ has an alternate description as the circle bundle of Euler number $-1$ over the torus.) Likewise, $-Y$ can be obtained by $(0,0,1)$ surgery on the Borromean rings. Let $\spincs$ denote the unique torsion spin$^c$ structure on $Y$. Ozsv\'ath and Szab\'o \cite[Lemma 8.7]{OSzAbsolute} proved that $\dtw(-Y,\spincs; \HomRing{-Y}) = -1$, and therefore $\dknot(\Sigma^r) = -2$. A very similar computation (following the proofs of \cite[Lemmas 8.6 and 8.7]{OSzAbsolute}) shows that $\dtw(Y,\spincs; \HomRing{Y}) = 1$, so $\dknot(\Sigma)=0$. By \eqref{eq: dfour-symmetric}, it follows that $\Sigma$ does not have any Seifert surface that is $d$-symmetric, such as any manifold of the form $Q \conn n(S^1 \times S^2)$.
\end{example}

\begin{remark}
Just as with classical knots, the degree of the Alexander polynomial $\Delta(\Sigma)$ provides a lower bound on $b_1$ of any Seifert surface for $\Sigma$. In particular, if $\Sigma$ admits a Seifert surface that is a rational homology sphere, then $\Delta(\Sigma)=1$. It would thus be interesting to find a $2$-knot $\Sigma$ with $\Delta(\Sigma)=1$ that fails to satisfy \eqref{eq: dknot-symmetric} and therefore does not admit a rational homology sphere Seifert surface. (We do not know of any other such obstruction.)
\end{remark}

\bibliography{bibliography}
\bibliographystyle{amsplain}

\end{document}